\documentclass[10pt,twoside,reqno,centertags,draft]{amsart}
\usepackage{amsfonts}
\usepackage{color,enumitem,graphicx}
\usepackage[colorlinks=true,urlcolor=blue,
citecolor=red,linkcolor=blue,linktocpage,pdfpagelabels,
bookmarksnumbered,bookmarksopen]{hyperref}
  \usepackage{amsmath,amsthm,amsfonts,amssymb}

\newcommand{\N}{\mathbb{N}}
\newcommand{\R}{\mathbb{R}}

\newcommand{\cA}{{\mathcal A}}

\newcommand{\cE}{{\mathcal E}}
\newcommand{\cF}{{\mathcal F}}

\newcommand{\norm}[1]{\lVert#1\rVert}

\newcommand{\s}{s}

\newcommand{\equ}[1]{(\ref{#1})}

\numberwithin{equation}{section}

\newtheorem{lemma}{Lemma}[section]
\newtheorem{theorem}{Theorem}[section]
\newtheorem{definition}{Definition}[section]
\newtheorem{proposition}{Proposition}[section]
\newtheorem{remark}{Remark}[section]
\newtheorem{corollary}{Corollary}[section]

\begin{document}
\title{ Existence and nonexistence of solutions for fractional elliptic equations arising from closed MEMS model    }
\date{}
\maketitle

\vspace{ -1\baselineskip}

\begin{center}

  {\small  Huyuan Chen$^{*,}$\footnote{chenhuyuan@yeah.net}\qquad  
  Jialei Jiang$^{\dag,}$\footnote{Jiangjialei2023@126.com} \qquad  
  Jun Wang$^{\ddag,}$\footnote{wangmath2011@126.com}}
 \medskip

 {\small   $^*$\,Center for Mathematics and Interdisciplinary Sciences, Fudan University, \\
Shanghai 200433,  PR China \\
and\\
 Shanghai Institute for Mathematics and Interdisciplinary Sciences (SIMIS), \\
Shanghai 200433,  PR China} \\[3mm]

 {\small   $^\dag$\,Department of Mathematics, Jiangxi Normal University, Nanchang,\\ Jiangxi 330022, PR China   } \\[3mm]

 {\small    $^\ddag$\,School of Mathematical Sciences, Jiangsu University, Zhenjiang,\\ Jiangsu  212013, PR China   } \\[3mm]
 \medskip

\end{center}

\begin{quote}
%\footnotesize
{\bf Abstract.} The objective of our paper is to investigate
fractional elliptic equations of the form $(-\Delta)^s
u=\frac{\lambda }{(a-u)^2}$ within a bounded domain $\Omega$,
subject to zero Dirichlet boundary conditions. Here, $s\in(0,1)$,
$\lambda>0$, and the function $a$ vanishes at the boundary while
satisfying additional conditions. This problem originates from
Micro-Electromechanical Systems (MEMS) devices, particularly when the
elastic membrane makes contact with the ground plate at the
boundary. We establish both existence and nonexistence results,
illustrating how the boundary decay of the membrane influences the
solutions and pull-in voltage.
\end{quote}

   \smallskip

\noindent {\qquad \quad \ \small {\bf Keywords}:  MEMS,  Fractional Laplacian, Existence, Pull-in Voltage.}

   \smallskip

 \noindent {\qquad \quad \ \small {\bf Mathematics Subject Classification}:  36J08, 35B50, 35J15.}

  \medskip

% }

\setcounter{equation}{0}
\section{Introduction}

Let $\Omega$ be a $C^2$ connected bounded domain in $\R^N$ with $N\ge 2$,
the function  $a:\bar\Omega\to[0,1]$ be in $C^{0,\beta}_{\rm loc}(\Omega)\cap C(\bar\Omega)$ with $\beta\in(0,1)$.
Our purpose of this paper is to study the existence and nonexistence of minimal solutions to the
   fractional elliptic equation
\begin{equation}\label{eq 1.1}
\left\{\arraycolsep=1pt
\begin{array}{lll}
 (-\Delta)^s    u = \frac{\lambda }{(a-u)^2}\quad  &{\rm in}\quad\ \Omega,
 \\[2mm]
 \phantom{-- }
 0<u\leq a\quad &{\rm in}\quad\ \Omega,
 \\[2mm]
 \phantom{--  }
 \quad\ u=0\quad &{\rm in}\quad\   \mathbb{R}^N\backslash \Omega,
\end{array}
\right.
\end{equation}
where  $\lambda>0$ and  $(-\Delta)^\s$ with $\s\in(0,1)$  is the fractional Laplacian defined as
 \begin{equation}\label{fl 1}
 (-\Delta)^\s  u(x)=\frac{c_{N,\s}}{2} P.V. \int_{\R^N}\frac{2u(x)-u(x+y)-u(x-y)}{|y|^{N+2\s}}  dy,
\end{equation}
here   $c_{N,\s}$ is a normalized constant as
$$c_{N,\s}=2^{2\s}\pi^{-\frac N2}\s\frac{\Gamma(\frac{N+2\s}2)}{\Gamma(1-\s)},$$
P.V. denotes the principal value of the integral, for notational simplicity we omit in what follows.

The equation denoted by (\ref{eq 1.1}) originates from scenarios
involving Micro-Electromechanical Systems (MEMS) devices,
particularly when the elastic membrane makes contact with the ground
plate along the boundary. Specifically, when $s=1$ and $a=1$,
(\ref{eq 1.1}) simplifies to the classical MEMS equation
\begin{equation}\label{eq 2024 1}
\left\{\arraycolsep=1pt
\begin{array}{lll}
-\Delta u = \frac{\lambda }{(1-u)^2}\quad & {\rm in}\quad\ \Omega,
\\[2mm]
 \phantom{\   }
 \quad\ u=0\quad &{\rm on}\quad\   \partial\Omega,
 \end{array}
\right.
\end{equation}
where parameter $\lambda>0$ characterizes the relative strength of the electrostatic and mechanical forces in the system,
equation (\ref{eq 2024 1}) models a simple electrostatic MEMS device consisting of a elastic membrane with boundary supported
at $0$ above a rigid ground plate located at $1$.
There exists a critical value $\lambda^*$ (pull-in voltage) such that if $\lambda \in (0,\lambda^*)$,
problem (\ref{eq 2024 1}) admits a minimal solution, while for $\lambda>\lambda^*$, no solution for  (\ref{eq 2024 1}) exists.
When $s=1$, the function  $a:\bar\Omega\to[0,1]$ is in the class of $C^\gamma(\Omega)\cap C(\bar\Omega)$ and satisfies
\begin{equation}\label{eq 2024 2}
a(x)\ge \kappa\rho(x)^\gamma,\quad \forall \ x\in\Omega
\end{equation}
with  $\kappa>0$, $\gamma\in(0,1)$, $\rho(x)={\rm dist}(x,\partial\Omega)$ for $x\in\Omega$, Chen-Wang-Zhou in \cite{CWZ1} solved
\begin{equation}\label{2024 3}
\left\{\arraycolsep=1pt
\begin{array}{lll}
 -\Delta    u = \frac{\lambda }{(a-u)^2}\quad  &{\rm in}\quad\ \Omega,
 \\[2mm]
 \phantom{- }
 0<u<a\quad &{\rm in}\quad\ \Omega,
 \\[2mm]
 \phantom{-\Delta   }
 u=0\quad &{\rm on}\quad   \partial \Omega,
\end{array}
\right.
\end{equation}
which  models a MEMS device that the static deformation  of the surface of membrane when it is applied voltage $\lambda$,
where $a$ is initially undeflected state  of the elastic membrane that contacts the ground plate on the boundary.
More results on MEMS see  \cite{CWZ2,GG, GHW,G, GPW, LBS, LY,P} and reference therein.

The primary objective of our paper is to examine the MEMS equation
(\ref{eq 1.1}) that incorporates the fractional Laplacian. In
pursuit of establishing the existence of minimal solutions to
(\ref{eq 1.1}), we proceed under the assumption that
\begin{enumerate}
\item[$(\cA_0)$]  For some $\gamma\in(0,s)$,
 \begin{equation}\label{1.1}
a(x)\ge \kappa\rho(x)^\gamma,\quad \forall \ x\in\Omega,
\end{equation}
where  $\kappa>0$ and
$$\rho(x)=\min\big\{\frac{1}{2}, {\rm dist}(x,\partial\Omega)\big\}. $$
\end{enumerate}

\smallskip

Now we show the existence result.

\begin{theorem}\label{teo 1}
Assume that  $a$   satisfies (\ref{1.1}) with $\gamma\in(0,\frac23s]$, then there exists a
finite pull-in voltage $\lambda^*$ such that
\begin{enumerate}
\item[$(i)$]   for $\lambda\in(0,\lambda^*)$, problem (\ref{eq 1.1}) admits a minimal solution $u_\lambda$ and
the mapping: $\lambda\mapsto u_\lambda$ is   strictly  increasing;

\item[$(ii)$]   for $\lambda>\lambda^*$, there is no solution for (\ref{eq 1.1});

\item[$(iii)$]    assume more that there exists $c \ge \kappa$ such that
\begin{equation}\label{a 1.0}
a(x)\le c\rho(x)^{\gamma}, \quad x\in\Omega,
\end{equation}
 then there exists  $\lambda_*\in(0, \lambda^*]$ such that for $\lambda\in(0,\lambda_*)$,    $u_\lambda\in H^s_0(\Omega)$ and
$$
{\rm for}\quad \gamma\not=\frac12s,\qquad \frac{\lambda}{c_0}\rho(x)^{\min\{s,2s-2\gamma\}}\le
u_\lambda(x)\le c_0 \lambda \rho(x)^{\min\{s,2s-2\gamma\}},\qquad \forall x\in
\Omega;
$$
$$
{\rm for}\quad\gamma=\frac12s,\qquad \frac{\lambda}{c_0}\rho(x)^s\ln \frac1{\rho(x)} \le
u_\lambda(x)\le c_0 \lambda \rho(x)^s\ln \frac1{\rho(x)},\qquad \forall x\in A_{\frac12},
$$
where $c_0\ge 1$ and $A_t=\{x\in\Omega:\ \rho(x)<t\}$ for $t>0$.

In particular, when $\Omega=B_1(0)$ and
$$a(x)=\kappa(1-|x|^2)^\gamma,\qquad \forall x\in B_1(0),$$
then the mappings: $\gamma\mapsto \lambda_*(\kappa,\gamma)$, $\gamma\mapsto\lambda^*(\kappa,\gamma)$ are decreasing.

\end{enumerate}
\end{theorem}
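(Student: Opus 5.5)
The plan is to construct solutions by monotone iteration from super-solutions, using the Green operator $\mathbb{G}_s$ of $(-\Delta)^s$ in $\Omega$ together with the standard weighted estimate
$$\mathbb{G}_s[\rho^{-\tau}](x)\asymp \begin{cases}\rho(x)^{s}, & \tau<s,\\ \rho(x)^s\ln\frac{1}{\rho(x)}, & \tau=s,\\ \rho(x)^{2s-\tau}, & s<\tau<1+s.\end{cases}$$
For the existence part of (i), I set $\tau=2\gamma$ (note $2\gamma\le \frac43 s<1+s$) and take $w=t\,\mathbb{G}_s[\rho^{-2\gamma}]$. Then $w\le c_1t\rho^{\min\{s,2s-2\gamma\}}$ (with a log factor when $2\gamma=s$). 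The condition $\gamma\le \frac23 s$ gives $2s-2\gamma\ge\gamma$, so for $t$ small we have $w\le \frac{\kappa}{2}\rho^\gamma\le \frac a2$. In the borderline case $2s-2\gamma=\gamma$ the smallness comes from $t$ alone, and when $2\gamma=s$ the log is absorbed because $s>\gamma$. Consequently $\frac{\lambda}{(a-w)^2}\le \frac{4\lambda}{\kappa^2}\rho^{-2\gamma}\le t\rho^{-2\gamma}=(-\Delta)^s w$ once $\lambda\le t\kappa^2/4$, so $w$ is a super-solution. The iteration $u_0=0$, $u_{n+1}=\mathbb{G}_s[\lambda(a-u_n)^{-2}]$ is then increasing and bounded by $w$, which follows from the comparison principle and the fact that $r\mapsto(a-r)^{-2}$ is increasing. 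It converges to a solution, and that solution is minimal because any solution $v$ is a super-solution, so induction gives $u_n\le v$.

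I then define $\lambda^*=\sup\{\lambda: \eqref{eq 1.1}\text{ has a solution}\}$. Any solution for $\lambda$ is a super-solution for every $\lambda'<\lambda$, so the solvable set is an interval and minimal solutions are nondecreasing in $\lambda$. Strict monotonicity follows from $(-\Delta)^s(u_\lambda-u_{\lambda'})\ge(\lambda-\lambda')a^{-2}>0$ together with the strong maximum principle, which gives (i) and (ii) apart from finiteness.

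To show $\lambda^*<\infty$, I test the equation against the first eigenfunction $\varphi_1>0$ of $(-\Delta)^s$ in $\Omega$:
$$\lambda_1\int u\varphi_1=\lambda\int\frac{\varphi_1}{(a-u)^2}\ge \lambda\int\varphi_1 .$$
Using $u\le a\le 1$, the left side is at most $\lambda_1\int\varphi_1$, hence $\lambda\le\lambda_1$. This step is easy.

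For (iii), the upper bound $u_\lambda\le w\lesssim\lambda\rho^{\min\{s,2s-2\gamma\}}$ comes from the construction, choosing $t=4\lambda/\kappa^2$ and restricting to $\lambda<\lambda_*$ small enough that $w\le a/2$. The lower bound uses $(a-u)^{-2}\ge a^{-2}\ge c^{-2}\rho^{-2\gamma}$, which comes from \eqref{a 1.0}, together with the lower half of the Green estimate, giving the log version when $\gamma=\frac s2$. To get $u_\lambda\in H^s_0$, I would compute $\int u(-\Delta)^su=\lambda\int u(a-u)^{-2}\lesssim\int\rho^{\min\{s,2s-2\gamma\}-2\gamma}$ and check that it is finite: the exponent exceeds $-1$ because $2\gamma\le\frac43 s<1+s$ and $s-2\gamma>-1$. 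This energy bound then passes to the iterates.

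The main obstacle I expect is the monotonicity in $\gamma$ for $\Omega=B_1$ and $a=\kappa(1-|x|^2)^\gamma$. Since $0\le 1-|x|^2\le1$, the profile $a_\gamma$ is decreasing in $\gamma$. For $\gamma_1<\gamma_2$, a solution $u$ with $a_{\gamma_2}$ satisfies $u\le a_{\gamma_2}\le a_{\gamma_1}$ and $(a_{\gamma_1}-u)^{-2}\le(a_{\gamma_2}-u)^{-2}$, so $u$ is a super-solution for the $\gamma_1$ problem with the same $\lambda$. Hence $\lambda^*(\gamma_2)\le\lambda^*(\gamma_1)$. For $\lambda_*$ the same comparison applies provided $\lambda_*$ is defined intrinsically, namely as the supremum of those $\lambda$ for which $u_\lambda\in H^s_0$ and the two-sided bounds hold. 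Making that definition consistent is the delicate point, and the two-sided bounds in particular need care under this comparison.
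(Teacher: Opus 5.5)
Apart from one point, your plan follows the paper. It uses monotone iteration under a super-solution supplied by the Green estimate of Section 2; your explicit $w=t\,\mathbb{G}_{s,\Omega}[\rho^{-2\gamma}]$ replaces the paper's inductive bound $v_n\le\mu\rho^\gamma$. Minimality comes by induction, and strict monotonicity from $(-\Delta)^s(u_\lambda-u_{\lambda'})\ge(\lambda-\lambda')a^{-2}$. The lower bound comes from $\lambda\,\mathbb{G}_{s,\Omega}[a^{-2}]$. For $\gamma_1<\gamma_2$, a solution for $a_{\gamma_2}$ is a super-solution for $a_{\gamma_1}$, which gives $\lambda^*(\kappa,\gamma_2)\le\lambda^*(\kappa,\gamma_1)$.

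Your finiteness argument takes a different and simpler route. Testing against the first eigenfunction $\varphi_1$ and using $u\le a\le 1$ and $(a-u)^{-2}\ge 1$ gives $\lambda^*\le\lambda_1$. The paper instead tests against $\mathbb{G}_{s,\Omega}[1]\eta_n$, because it wants the weighted bound $\lambda^*\le\int_\Omega a\,dx/\int_\Omega \mathbb{G}_{s,\Omega}[1]a^{-2}dx$ for the explicit Beta-function estimate on the ball. You should still justify $\int_\Omega u(-\Delta)^s\varphi_1=\int_\Omega\varphi_1(-\Delta)^s u$, since $\varphi_1(a-u)^{-2}$ is not known to be integrable a priori. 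It suffices to use $u\ge\mathbb{G}_{s,\Omega}[\lambda(a-u)^{-2}]$ (maximum principle on exhausting compacts) together with Tonelli.

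The genuine gap is the last claim of (iii), as you acknowledge: you never fix a $\lambda_*$ for which $\gamma\mapsto\lambda_*(\kappa,\gamma)$ is shown to decrease. Defining $\lambda_*$ through the conclusions (two-sided bounds plus $H^s_0$) does not transfer directly under your comparison. For $\gamma_1<\gamma_2$ it only yields $u^{(1)}_\lambda\le u^{(2)}_\lambda\le c_0\lambda\rho^{\min\{s,2s-2\gamma_2\}}$. For $\gamma_2>\frac s2$ this is weaker than the decay $\rho^{\min\{s,2s-2\gamma_1\}}$ required for $\gamma_1$.

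The missing idea is to define $\lambda_*$ by a uniform gap to the obstacle, as the paper does with $\limsup_{x\to\partial\Omega}u_\lambda(x)\rho(x)^{-\gamma}<\kappa$. Equivalently, $\lambda_*=\sup\{\lambda\in(0,\lambda^*):\ u_\lambda\le\theta a \text{ in } \Omega \text{ for some } \theta\in(0,1)\}$.
\begin{itemize}
\item This set is an interval because $\lambda\mapsto u_\lambda$ is increasing, and it contains all small $\lambda$ by your super-solution $w\le a/2$.
\item On this set the bounds become consequences rather than requirements. The estimate $u_\lambda=\lambda\,\mathbb{G}_{s,\Omega}[(a-u_\lambda)^{-2}]\le\lambda(1-\theta)^{-2}\kappa^{-2}\,\mathbb{G}_{s,\Omega}[\rho^{-2\gamma}]$ gives the upper bound and the finiteness of $\int_\Omega u_\lambda(a-u_\lambda)^{-2}dx$. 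The estimate $u_\lambda\ge\lambda\,\mathbb{G}_{s,\Omega}[a^{-2}]$ with $a\le c\rho^\gamma$ gives the lower bound; the constant then depends on $\theta$.
\item The gap condition is inherited: $u^{(1)}_\lambda\le u^{(2)}_\lambda\le\theta a_{\gamma_2}\le\theta a_{\gamma_1}$, so $\lambda_*(\kappa,\gamma_2)\le\lambda_*(\kappa,\gamma_1)$.
\end{itemize}
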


Theorem \ref{teo 1} affirms that the membrane of a MEMS device can
be tailored to match the surface of a unit semi-sphere. This implies
that
$$\Omega=B_1(0)\quad{\rm and}\quad a(x)=(1-|x|^2)^{\frac{s}{2}}.$$
Equivalently, in the scenario where $a(x)=\rho(x)^{\frac{s}{2}}$,
there exists a positive finite pull-in voltage $\lambda^*$. And from
Theorem \ref{teo 1}, we observe that the mapping $\lambda\mapsto
u_\lambda$ is increasing and uniformly bounded, let us denote
\begin{equation}\label{4.2}
 u_{\lambda^*}:=\lim_{\lambda\to\lambda^*} u_\lambda\quad{\rm in}\ \ \bar\Omega.
\end{equation}
We next show that $u_{\lambda^*}$ is a solution of (\ref{eq 1.1}) in the following weak sense.

\begin{definition}\label{def 1}
A function $u$ is a weak solution of (\ref{eq 1.1}) if $0\le u\le a$ and
$$
\int_\Omega u (-\Delta)^s \xi\, dx=\int_\Omega \frac{\lambda \xi}{(a-u)^2} dx,\qquad \forall \,\xi\in C_c^2(\Omega),
$$
where  $C_c^2(\Omega)$ is the space of all  $C^2$ functions with compact support in $\Omega$.

A solution (or weak solution) $u$ of (\ref{eq 1.1}) is stable (resp. semi-stable) if
$$
\int_\Omega |(-\Delta)^\frac s2 \varphi|^2dx=\int_\Omega  \varphi\cdot(-\Delta)^s \varphi dx
>\int_\Omega \frac{2\lambda \varphi^2}{(a-u)^3} dx,\quad ({\rm resp.}\ \ge)\quad \forall \, \varphi\in H^s_0(\Omega)\setminus\{0\}.
$$
\end{definition}

\begin{theorem}\label{teo 3}
Assume that $\lambda\in(0,\lambda^*)$, the function
$a\in C^\gamma(\Omega)\cap C(\bar\Omega)$  satisfies (\ref{1.1}) and (\ref{a 1.0}) with
 $c_0\ge \kappa>0$, $\gamma\in(0,\frac23s]$,
$u_\lambda$ is the minimal solution of (\ref{eq 1.1}) and $ u_{\lambda^*}$ is given by (\ref{4.2}).
Then

\begin{itemize}
  \item [$(i)$] $u_{\lambda^*}$ is a weak solution of (\ref{eq 1.1}) and
$u_{\lambda^*}\in W^{s,\frac{N}{N-\beta}}_0(\Omega)$ for any
$\beta\in(0,\min\{\gamma-s+1,s\})$;
  \item [$(ii)$] $u_\lambda$ is a stable solution of (\ref{eq 1.1}) with $\lambda\in(0,\lambda_*)$;
  \item [$(iii)$] if $\gamma=\frac23s$, we have that $\lambda^*=\lambda_*$,
  $ u_{\lambda^*}$ is a semi-stable weak  solution of (\ref{eq 1.1}).
\end{itemize}
Assume more that $1\le N\le \frac{14s}{3}$,  $\Omega=B_1(0)$ and
$a(x)=\kappa(1-|x|^2)^{\frac23s}$, then $ u_{\lambda^*}$ is a
classical solution of (\ref{eq 1.1}).
\end{theorem}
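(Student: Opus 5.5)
The plan is to treat the three parts in order, using Theorem \ref{teo 1} throughout: monotonicity of $\lambda\mapsto u_\lambda$, the two-sided boundary estimates for $\lambda<\lambda_*$, and the bound $u_\lambda\le a$.

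\emph{Part (i).} Since $u_\lambda$ is increasing in $\lambda$ and bounded by $a$, the limit $u_{\lambda^*}$ exists pointwise and $0\le u_{\lambda^*}\le a$. To pass to the limit in the weak formulation we need uniform integrability of $\lambda(a-u_\lambda)^{-2}$. Test the equation for $u_\lambda$ against the torsion-type function $\eta_1$ solving $(-\Delta)^s\eta_1=1$ in $\Omega$, $\eta_1=0$ outside; recall $\eta_1\sim\rho^s$. This gives
\[
\int_\Omega\frac{\lambda\,\eta_1}{(a-u_\lambda)^2}\,dx=\int_\Omega u_\lambda\,dx\le\int_\Omega a\,dx,
\]
uniformly in $\lambda$. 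Hence $f_\lambda:=\lambda(a-u_\lambda)^{-2}$ is bounded in $L^1(\Omega,\rho^s dx)$. By Fatou's lemma and monotone convergence (the $f_\lambda$ increase in $\lambda$), $f_\lambda\to f^*:=\lambda^*(a-u_{\lambda^*})^{-2}$ in $L^1(\Omega,\rho^s dx)$. For $\xi\in C^2_c(\Omega)$ we have $|\xi|\le C\rho^s$, so
\[
\int_\Omega f_\lambda\,\xi\,dx\to\int_\Omega f^*\xi\,dx,
\qquad
\int_\Omega u_\lambda(-\Delta)^s\xi\,dx\to\int_\Omega u_{\lambda^*}(-\Delta)^s\xi\,dx
\]
by dominated convergence. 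The Sobolev regularity $W^{s,\frac N{N-\beta}}_0$ then follows from the known regularity theory for $(-\Delta)^s u=f$ with $f\in L^1(\Omega,\rho^\beta dx)$ (Chen--Véron type estimates). We therefore need $f^*\in L^1(\rho^\beta)$ for $\beta<\min\{\gamma-s+1,s\}$. The range $\beta<s$ is covered by the weighted bound above. The restriction involving $\gamma-s+1$ should come from the fact that, near the boundary, $f^*\ge\lambda^*a^{-2}\gtrsim\rho^{-2\gamma}$, combined with the upper bound \eqref{a 1.0}. I would verify this threshold by comparing with $\rho^{-2\gamma}$ and applying the $L^1$-weighted estimate. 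This weighted regularity step is the main technical obstacle.

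\emph{Part (ii).} For $\lambda<\lambda_*$, the function $u_\lambda$ is the monotone limit of the iteration $u_{n+1}=\mathbb G[\lambda(a-u_n)^{-2}]$, and Theorem \ref{teo 1}(iii) gives $u_\lambda\le c_0\lambda\rho^{\min\{s,2s-2\gamma\}}$. Consequently $\lambda(a-u_\lambda)^{-3}\le C\lambda\rho^{-3\gamma}$ once $\lambda$ is small enough that $u_\lambda\le a/2$ near the boundary. Since $3\gamma\le 2s$, the fractional Hardy inequality $\int\varphi^2\rho^{-2s}\le C\|\varphi\|^2_{H^s_0}$ controls the quadratic form $\int 2\lambda\varphi^2(a-u_\lambda)^{-3}$. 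I would prefer a sharper argument not relying on smallness: for $\mu\in(\lambda,\lambda_*)$, the function $w=u_\mu-u_\lambda>0$ satisfies
\[
(-\Delta)^s w>\frac{2\lambda\,w}{(a-u_\lambda)^3}
\]
by convexity of $t\mapsto(a-t)^{-2}$. Positive strict supersolutions of the linearized operator yield positivity of its first eigenvalue, via a Picone/Allegretto--Piepenbrink identity in the nonlocal setting. This gives stability.

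\emph{Part (iii).} For $\gamma=\frac23 s$ the exponent satisfies $3\gamma=2s$, so the Hardy comparison above is critical. I would show that minimal solutions exist and remain stable up to $\lambda^*$, which forces $\lambda_*=\lambda^*$. Passing to the limit in the stability inequality by Fatou's lemma then gives semi-stability of $u_{\lambda^*}$.

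For the radial case with $N\le\frac{14s}{3}$, I would use semi-stability with test functions $\varphi=(a-u_{\lambda^*})^{-q}-\text{const}$, in the Crandall--Rabinowitz/Ghoussoub--Guo style, to get $(a-u)^{-1}\in L^p$ for large $p$. Then radial monotonicity and bootstrapping show $u_{\lambda^*}<a$ in the interior with $a-u_{\lambda^*}\gtrsim\rho^{\gamma}$, so the solution is classical. The dimension restriction would enter exactly through the admissible range of $q$ in this semi-stability argument.
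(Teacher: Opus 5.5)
The weak-limit part of (i) works: testing with $\mathbb{G}_{s,\Omega}[1]$ and using monotone convergence in $L^1(\Omega,\rho^s dx)$ is enough. The Sobolev regularity in (i), however, has a genuine gap.

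In \eqref{einq-1111} with $p=\frac{N}{N-\beta}$ one has $\frac{N(p-1)}{p}=\beta$. Landing in $W^{s,p}$ forces $t_2=s$, hence $t_1=s-\beta$, so you need $\lambda^*(a-u_{\lambda^*})^{-2}\in L^1(\Omega,\rho^{s-\beta}dx)$. Near $\partial\Omega$ that weight, and equally the weight $\rho^\beta$ you wrote, is strictly larger than $\rho^s$. So the torsion bound covers no $\beta\in(0,s)$, contrary to your claim. The lower bound $\lambda^*(a-u_{\lambda^*})^{-2}\gtrsim\rho^{-2\gamma}$ cannot help either: it only makes weighted integrals larger, never bounds them.

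The missing idea is to test with $\mathbb{G}_{s,\Omega}[\rho^{-s-\beta}]$ (cut off by $\eta_n$) instead of $\mathbb{G}_{s,\Omega}[1]$. The Green estimate with $\tau=s-\beta\in(0,s)$ gives $\mathbb{G}_{s,\Omega}[\rho^{-s-\beta}]\asymp\rho^{s-\beta}$. Duality together with the \emph{upper} bound $u_\lambda\le a\le c\rho^\gamma$ then yields
\[
\int_\Omega\frac{\lambda\,\rho^{s-\beta}}{(a-u_\lambda)^2}\,dx\lesssim\int_\Omega u_\lambda\,\rho^{-s-\beta}\,dx\le c\int_\Omega\rho^{\gamma-s-\beta}\,dx
\]
uniformly in $\lambda\in(0,\lambda^*)$. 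The right-hand side is finite exactly when $\beta<\gamma-s+1$, which is where that threshold comes from. Monotone convergence then carries the bound to $\lambda^*$.

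In (iii), ``stable up to $\lambda^*$, which forces $\lambda_*=\lambda^*$'' is a non sequitur. $\lambda_*$ is defined by boundary behaviour ($u_\lambda\le\theta a$ with $\theta<1$), and stability carries no such information. The actual mechanism is the identity $2s-2\gamma=\gamma$ at $\gamma=\frac23 s$. For $\lambda<\lambda'<\lambda^*$, strict monotonicity and $a\le c\rho^\gamma$ give
\[
a-u_\lambda\ge u_{\lambda'}-u_\lambda\ge(\lambda'-\lambda)\,\mathbb{G}_{s,\Omega}[a^{-2}]\ge\frac{\lambda'-\lambda}{c^2}\,\mathbb{G}_{s,\Omega}[\rho^{-2\gamma}]\gtrsim(\lambda'-\lambda)\,\rho^{2s-2\gamma}=(\lambda'-\lambda)\,\rho^{\gamma}\gtrsim(\lambda'-\lambda)\,a ,
\]
so $u_\lambda\le\theta a$ for every $\lambda<\lambda^*$.

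Your Picone argument for (ii), by contrast, is valid and differs from the paper's. The paper uses Hardy for small $\lambda$, monotonicity and Lipschitz continuity of $\mu_1(\lambda)$, and a rigidity lemma when $\mu_1=0$. Your strict supersolution $w=u_\mu-u_\lambda$ gives the strict inequality of the stability definition directly. It does not need $\mu_1(\lambda)$ to be attained, which is not automatic in the Hardy-critical case $3\gamma=2s$. It also works for every $\lambda<\lambda^*$, so semi-stability of $u_{\lambda^*}$ follows by Fatou independently of $\lambda_*$.

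For the radial case, run the Ghoussoub--Guo computation on $u_\lambda$ with $\lambda<\lambda^*$. Use the truncated test functions $(a-u_\lambda)^\theta-\epsilon^\theta$ on $B_{r_n}(0)$, $\theta\in(-2,0)$, to get $\lambda$-uniform bounds on $\int_{B_{r_n}(0)}(a-u_\lambda)^{-(3-2\theta)}dx$, then pass to the limit. The function $(a-u_{\lambda^*})^{-q}$ itself is not known to lie in $H^s_0$. The bound $a-u_{\lambda^*}\gtrsim\rho^\gamma$ you invoke is also unavailable, since the gap estimate above needs some $\lambda'>\lambda$. The restriction $N\le\frac{14s}{3}$ enters through $\frac{3N}{2s}\le 3-2\theta<7$.
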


%Note that  the extremal solution $u_{\lambda^*}$ is a weak solution in special space. We show the stability of $u_\lambda$ %with $\lambda\in(0,\lambda_*)$
%by Hardy's inequality

%. Here we use the generalized Hardy's inequality  to arise the stability of $u_\lambda$ with $\lambda\in(0,\lambda_*)$,  for $1\le N\le \frac{14s}{3}$, %$\gamma=\frac23s$, the regularity of $u_{\lambda^*}$ could be improved into classical sense.

For the case of $\frac23s<\gamma<s$, the nonexistence result is stated as follows.

\begin{theorem}\label{teo 2}
Assume that $a\in  C(\bar\Omega)$ satisfies $0<a(x)\leq c \, \rho(x)^\gamma$ with $\gamma\in(\frac23s,s)$ and $c>0$.
Then problem (\ref{eq 1.1}) admits no nonnegative solution for any $\lambda>0$.

\end{theorem}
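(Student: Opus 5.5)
We argue by contradiction: suppose that for some $\lambda>0$ problem (\ref{eq 1.1}) has a nonnegative solution $u$, with $0\le u\le a\le c\rho^\gamma$ in $\Omega$. The idea is that the right-hand side is then too singular near $\partial\Omega$ for a function trapped below $c\rho^\gamma$, once $\gamma>\frac23 s$.

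\textbf{Step 1: a pointwise lower bound on the source.} Since $0\le u\le a$, we have $0<a-u\le a\le c\rho^\gamma$. Hence
$$f:=\frac{\lambda}{(a-u)^2}\ \ge\ \frac{\lambda}{c^2}\,\rho^{-2\gamma}\qquad{\rm in}\ \ \Omega.$$
As $\gamma<s<1$, we have $2\gamma<2$, so $\rho^{-2\gamma}$ is still integrable against $\rho^s$ whenever $2\gamma<1+s$. Integrability is therefore not what produces the contradiction; the boundary profile is.

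\textbf{Step 2: lower bound for $u$.} Since $u\ge0$ solves $(-\Delta)^s u=f$ with zero exterior data, the comparison principle gives $u\ge \mathbb G_s[f]\ge \frac{\lambda}{c^2}\,\mathbb G_s[\rho^{-2\gamma}]$, where $\mathbb G_s$ is the Green operator of $(-\Delta)^s$ in $\Omega$. If $f$ is not known to be integrable against $\rho^s$, I would instead work with truncations $\min\{f,n\}$ and use monotonicity.

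The core estimate is the standard boundary behaviour of the Green potential of a power of the distance. For $\tau\in(s,1+s)$, there is $C>0$ such that
$$\mathbb G_s[\rho^{-\tau}]\ \ge\ \frac1C\,\rho^{2s-\tau}\qquad{\rm in}\ \ A_{\frac12}.$$
One proves this either with the two-sided Green function estimate $G_s(x,y)\asymp \min\{|x-y|^{2s-N},\ \rho(x)^s\rho(y)^s|x-y|^{-N}\}$, integrating over the region $|x-y|\sim\rho(x)$, or with explicit barriers $\rho^{2s-\tau}$ whose fractional Laplacian near a flat boundary is a positive multiple of $\rho^{-\tau}$. This is the same computation that yields the $\rho^{\min\{s,2s-2\gamma\}}$ profile in Theorem \ref{teo 1}$(iii)$. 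Apply it with $\tau=2\gamma\in(s,2s)$, which lies in this range because $\gamma>\frac s2$. This gives
$$u\ge c_1\lambda\,\rho^{2s-2\gamma}\qquad{\rm near}\ \ \partial\Omega.$$

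\textbf{Step 3: contradiction.} Since $\gamma>\frac23 s$, we have $2s-2\gamma<\gamma$. Then
$$c\rho^\gamma\ \ge\ a\ \ge\ u\ \ge\ c_1\lambda\,\rho^{2s-2\gamma}$$
fails as $\rho\to0$, because $\rho^{2s-2\gamma}/\rho^\gamma=\rho^{2s-3\gamma}\to\infty$. This is exactly why the threshold is $\frac23s$. If the case where $f$ fails to be integrable needs separate treatment, I would note that then $\mathbb G_s[f]\equiv\infty$, which contradicts $u\le a$ directly.

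\textbf{Main obstacle.} The hardest step is the lower Green estimate $\mathbb G_s[\rho^{-2\gamma}]\gtrsim\rho^{2s-2\gamma}$, uniformly up to a $C^2$ boundary. I would first try a localized subsolution: flatten the boundary near a point $x_0\in\partial\Omega$ and compare with $\eta\,(x_N)_+^{2s-2\gamma}$ times a cutoff, using the one-dimensional identity $(-\Delta)^s (x_N)_+^{p}=C(p)(x_N)_+^{p-2s}$ with $C(p)>0$ for $p\in(2s-1,s)$. Here $p=2s-2\gamma$ lies in this interval. The cutoff and curvature errors are bounded, and they are absorbed by the singular source $\rho^{-2\gamma}$ in a thin layer.
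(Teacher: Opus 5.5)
Your proposal is correct and is essentially the paper's proof. The paper also writes $u=\lambda\mathbb{G}_{s,\Omega}[(a-u)^{-2}]\ge\frac{\lambda}{c^2}\mathbb{G}_{s,\Omega}[\rho^{-2\gamma}]\gtrsim\lambda\rho^{2s-2\gamma}$ (Corollary~\ref{cr 2.1}, obtained from the barrier $\rho^{2s-2\gamma}$, Proposition~\ref{pr 2.1.0}$(i)$ and comparison), and it concludes from $2s-2\gamma<\gamma$; your Green-kernel route, integrating $G_{s,\Omega}(x,y)\gtrsim|x-y|^{2s-N}$ over $B_{\rho(x)/2}(x)$, is an even quicker way to get the one-sided bound that is actually needed.

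There is one harmless slip in your barrier alternative. The half-line constant $C(p)$ is positive on all of $(s-1,s)$, and $p=2s-2\gamma$ need not lie in your interval $(2s-1,s)$ (it does not once $\gamma\ge\frac12$). It does always lie in $(0,s)$, where positivity holds by the paper's analysis of $\psi_s$, so the argument goes through.
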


\begin{remark}
\begin{itemize}
  \item [$(i)$] Our model is derived from the enclosed MEMS model \cite{CWZ1}.
The incorporation of the fractional Laplacian, a key component in
our model, is notable for its applicability in modeling fractional
quantum mechanics. This is particularly evident in the examination
of particles maneuvering through stochastic fields, which are often
represented by L\'evy processes \cite{L,L1}. Furthermore, the
fractional Laplacian is construed as a Pseudo-Relativistic operator
\cite{A,JKS} and see \cite{DT1,TV1} for the square root of the Laplacian.

  \item [$(ii)$] The most difficulty in the fractional case is the estimates
of $(-\Delta)^s \rho^\tau(x)$ for $\tau\in(0,2s)$, where $\rho$ is
distance to the boundary. Different from the boundary blowing up
case $\tau\in(-1,0)$ in \cite{CFQ}, they have different monotonicity
near the boundary. See the proof of Proposition \ref{pr 2.1.0}
below.

\item [$(iii)$] For  $\gamma\in(\frac23s,s)$, the nonexistence in Theorem
\ref{teo 2} shows that the pull-in voltage $\lambda^*=0$, so there
is a jump for $\lambda^*$ at $\gamma=\frac23s$, since $\lambda^*>0$
for $\gamma=\frac23s$.

\item [$(iv)$] Our next program is to develop our observations to study  the related parabolic equation
\begin{equation}\label{eq 1.1-p}
\left\{\arraycolsep=1pt
\begin{array}{lll}
\partial_t v+ (-\Delta)^s   v = \frac{\lambda }{(a-v)^2}\quad  &{\rm in}\quad\ (0,T)\times \Omega,
 \\[2mm]
 \phantom{\partial_t v+-\ }
 0<v< a\quad &{\rm in}\quad\ (0,T)\times\Omega,
 \\[2mm]
 \phantom{\partial_t v+--  }
 \quad\ v=0\quad &{\rm in}\quad\   (0,T)\times\big(\mathbb{R}^N\backslash \Omega\big), \\[2mm]
  \phantom{\partial_t v+-  }
  v(0,\cdot)=v_0\quad &{\rm in}\quad\     \Omega,
\end{array}
\right.
\end{equation}
where $T\in(0,+\infty]$ and $v_0$ is an initial data.
\end{itemize}
\end{remark}

The structure of this article is as follows: In Section 2, we
provide estimates crucial for determining the pull-in voltage.
Section 3 is dedicated to establishing the existence of a pull-in
voltage denoted as $\lambda^*$, such that problem (\ref{eq 1.1})
possesses a minimal solution for $\lambda\in(0,\lambda^)$. We also
analyze the boundary decay of this minimal solution and present the
proof of Theorem \ref{teo 2}. Section 4 focuses on estimating
$\lambda_*$ and $\lambda^*$ when $\Omega=B_1(0)$ and
$a(x)=\kappa(1-|x|)^\gamma$, along with providing the proof of
Theorem \ref{teo 1}. Finally, in Section 5, we investigate the
regularity and stability properties of $u_{\lambda^*}$ and offer the
proof of Theorem \ref{teo 3}.

\setcounter{equation}{0}
\section{Preliminary}

This section is devoted to the estimates of boundary behavior and
the fractional Green operator
$\mathbb{G}_{s,\Omega}[\rho^{\tau-2s}]$.
 To this end, we give some notations as follows.
Let $\delta_0\in(0,\frac12)$ be  such that the distance function
$\rho(\cdot)$ is of class $C^2$ in $A_{\delta_0}:=\{x\in \Omega,\rho(x)<\delta_0\}$. For $\tau\in(0,2s)$, we
define
\begin{equation}\label{3.3.1}
V_\tau(x)=\left\{ \arraycolsep=1pt
\begin{array}{lll}
 h_\tau(x),\ \ \ \ &
x\in \Omega\setminus A_{\delta_0},\\[2mm]
 \rho(x)^{\tau},\ & x\in A_{\delta_0},\\[2mm]
0,\ &x\in\Omega^c
\end{array}
\right.
\end{equation}
and
\begin{equation}\label{3.3.1a}
W_s(x)=\left\{ \arraycolsep=1pt
\begin{array}{lll}
 h_s(x),\ \ \ \ &
x\in \Omega\setminus A_{\delta_0},\\[2mm]
 \rho(x)^{s} \ln \frac1{\rho(x)},\ \ \, & x\in A_{\delta_0},\\[2mm]
0,\ &x\in\Omega^c,
\end{array}
\right.
\end{equation}
 where  the function $h_\tau$  is  positive  such that
$V_\tau$ is $C^2$ in $\Omega$.  We have the following estimates.
\begin{proposition}\label{pr 2.1.0}
Let $\Omega$ be a connected bounded open subset of $\R^N$ with $C^2$ boundary,  $s\in(0,1)$
the functions $V_\tau$ and $W_s$ be given by (\ref{3.3.1}) and (\ref{3.3.1a}), respectively. Then
 there exist  $\delta_1\in (0,\delta_0)$ and  $c_1>1$ depends on $\tau$ such that
\begin{itemize}
  \item [$(i)$] if $\tau\in(0,s)$, then
$$
\frac1{c_1} \rho(x)^{\tau-2s }\leq (-\Delta)^s V_\tau(x)\leq
{c_1}\rho(x)^{\tau-2s}, \ \ \ \forall \, x\in A_{\delta_1}.$$

  \item [$(ii)$] if
 $\tau\in(s,2s)$, then
$$
\frac1{c_1}\rho(x)^{\tau-2s }\leq-(-\Delta)^s V_\tau(x)\leq
{c_1}\rho(x)^{\tau-2s}, \ \ \ \forall \, x\in A_{\delta_1}.$$
  \item [$(iii)$] if
 $\tau=s$, then
$$
|(-\Delta)^s V_\tau (x)|\leq {c_1}(\tau), \ \ \ \forall \, x\in
A_{\delta_1}.$$

\item [$(iv)$]
$$
\frac1{c_1}\rho(x)^{-s}\leq-(-\Delta)^s W_s(x)\leq
{c_1}\rho(x)^{-s}, \ \ \ \forall \, x\in A_{\delta_1}.$$
\end{itemize}
\end{proposition}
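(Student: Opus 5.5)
Fix $x\in A_{\delta_1}$, with $\delta_1$ small, and write $d=\rho(x)$. I would reduce everything to the one-dimensional model $\ell_\tau(t)=(t_+)^\tau$ on $\R$. Its fractional Laplacian is explicit:
$$(-\Delta)^s_{\R}\,\ell_\tau(t)=C(\tau,s)\,t^{\tau-2s},\qquad t>0,$$
with $C(\tau,s)=\frac{\Gamma(\tau+1)\,\Gamma(2s-\tau)}{\Gamma(\tau-2s+1)\,\Gamma(-\tau)}$ up to a positive normalizing factor. A direct inspection of the signs of the Gamma factors gives $C>0$ for $\tau\in(0,s)$, $C<0$ for $\tau\in(s,2s)$, and $C=0$ at $\tau=s$, since $t_+^s$ is $s$-harmonic on the half line. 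This sign pattern is exactly what distinguishes (i), (ii) and (iii). For (iv), differentiating $t_+^\tau$ in $\tau$ at $\tau=s$ shows that $t_+^s\ln\frac1{t}$ has fractional Laplacian $-\partial_\tau C(\tau,s)|_{\tau=s}\,t^{-s}$. This coefficient is nonzero because $C(\cdot,s)$ has a simple zero at $s$ coming from $\Gamma(\tau-2s+1)^{-1}$, and its sign is read off from $C>0$ for $\tau<s$. Extending $t_+^\tau$ to $\R^N$ by $(x_N)_+^\tau$ gives the same formula in the half space.

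Next I would compare $V_\tau$ with the half-space profile. Let $\bar x\in\partial\Omega$ be the projection of $x$, $\nu$ the inner normal there, and $H$ the half space tangent to $\partial\Omega$ at $\bar x$. Set $\Phi(z)=(\langle z-\bar x,\nu\rangle)_+^\tau$. Then
$$(-\Delta)^sV_\tau(x)=C\,d^{\tau-2s}+(-\Delta)^s(V_\tau-\Phi)(x),$$
and the claim follows once I show $|(-\Delta)^s(V_\tau-\Phi)(x)|\le c\,d^{\tau-2s+\epsilon}$ for some $\epsilon>0$, or $\le c$, which suffices whenever $\tau-2s<0$. To estimate this error, split the integral defining $(-\Delta)^s(V_\tau-\Phi)(x)$ into three regions.

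\textbf{Region $|y|<d/2$.} Both functions are $C^2$ there, and by the $C^2$ regularity of $\partial\Omega$ we have $\rho(z)=\langle z-\bar x,\nu\rangle+O(|z-\bar x|^2)$. Using the second-difference form of the integral, the contribution is bounded by $c\,d^{\tau-1}\cdot d^{2-2s}$, i.e. $c\,d^{\tau-2s+1}$.

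\textbf{Region $d/2<|y|<r_0$.} Here $|V_\tau-\Phi|(x+y)\le c|y|^{2}\,\min\{\cdot\}^{\tau-1}+$ the contribution of the thin set between $\partial\Omega$ and $\partial H$, whose width at distance $r$ is about $r^2$. Since $|a^\tau-b^\tau|\le|a-b|^\tau$, the size of $V_\tau-\Phi$ on that set is at most $c\,r^{2\tau}$. Integrating against $|y|^{-N-2s}$ gives at worst $c\,d^{\tau-2s+\min\{1,\tau\}}$, up to a logarithm in borderline exponents.

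\textbf{Region $|y|>r_0$.} This part is bounded because both functions are bounded.

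For (iii) the main term vanishes, so the statement only needs the error to be $O(1)$. That requires care, since $d^{s-2s+\epsilon}$ is not bounded unless $\epsilon\ge s$. I would therefore redo the middle region more precisely, using the $C^2$ flattening map $z\mapsto(\rho(z),\text{tangential})$. In these coordinates the error is $O(d^{\tau-2s+1})$, and $O(1)$ when $\tau=s$ and $1\ge s$. This is the same device used for the boundary-blowup case in \cite{CFQ}.

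For (iv) the same decomposition applies with $\Phi=t_+^s\ln\frac1t$. The main term is $-c\,d^{-s}$, and the errors are $O(d^{-s+\epsilon}\ln\frac1d)$, which are of lower order.

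The main obstacle is the error estimate in the intermediate region, where the domain deviates from the tangent half space. The bound $c\,d^{\tau-2s+\min\{1,\tau\}}$ is only of lower order because the exponent is positive. To get uniformity up to the sharp exponent, and the $O(1)$ bound in (iii), I would first try flattening the boundary via the $C^2$ diffeomorphism and estimating the Jacobian perturbation of the kernel, which is $O(|y|)$. If the crude splitting loses too much, this flattening is the step I expect to fall back on. Finally, $\delta_1$ is chosen so that each error term is at most half of $|C|\,d^{\tau-2s}$ in cases (i), (ii) and (iv); this gives the two-sided bounds with $c_1$ depending on $\tau$.
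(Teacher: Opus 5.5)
For (i)--(iii) your plan is essentially the paper's argument. The paper also compares $V_\tau$ with the half-space profile, in graph coordinates via $(z_1-\varphi(z'))(1-c_3|z'|^2)\le\rho(z)\le z_1-\varphi(z')$. It isolates the main term $\mathbf J$, whose coefficient is the one-dimensional constant $\psi_s(\tau)$, and shows the curvature terms $E_0,E_1,E_2$ are smaller by a factor $x_1^{\min\{\tau+1,2s-\tau\}}$. The real difference is how the sign of the one-dimensional constant is obtained: the paper uses concavity of $\psi_s$ together with $\psi_s(0)=\frac1{2s}$ and $\psi_s(s)=0$, while you use a closed form.

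The closed form you quote is wrong. $\frac{\Gamma(\tau+1)\Gamma(2s-\tau)}{\Gamma(\tau-2s+1)\Gamma(-\tau)}$ vanishes at $\tau=0$, whereas $(-\Delta)^s\chi_{(0,\infty)}(t)=\frac{c_{1,s}}{2s}t^{-2s}>0$. It does not vanish at $\tau=s$, since the factor $\Gamma(\tau-2s+1)^{-1}$ vanishes at $\tau=2s-1$, not at $s$. It is also negative on $(0,s)$ when $s<\frac12$. The correct constant, for the normalization with symbol $|\xi|^{2s}$, is $C(\tau,s)=\frac{\Gamma(1+\tau)\Gamma(2s-\tau)}{\Gamma(1+\tau-s)\Gamma(s-\tau)}=\frac1\pi\Gamma(1+\tau)\Gamma(2s-\tau)\sin(\pi(s-\tau))$. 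With it your sign inspection is right, the simple zero at $s$ comes from $\Gamma(s-\tau)^{-1}$, and $\partial_\tau C(s,s)=-\Gamma(s)\Gamma(1+s)$.

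The flattening fallback for (iii) is not needed. Your splitting gives $O(d^{1+\tau-2s})$ from $|y|<d/2$. Per dyadic shell $|y|\sim r$ it gives $O(r^{1+\tau-2s})$ from the smooth part and $O(r^{1+2\tau-2s})$ from the thin set. All of these sum to $O(1)$ when $\tau=s$.

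The genuine gap is (iv), and it cannot be closed, because the inequality there has the wrong sign. Your own first paragraph settles it. $t_+^s\ln\frac1t$ is the limit as $\tau\to s^-$ of $\frac{t_+^\tau-t_+^s}{s-\tau}$, whose fractional Laplacian is $\frac{C(\tau,s)}{s-\tau}\,t^{\tau-2s}>0$. Hence $(-\Delta)^s\big(t_+^s\ln\frac1t\big)=-\partial_\tau C(s,s)\,t^{-s}=\Gamma(s)\Gamma(1+s)\,t^{-s}>0$.

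Your sentence ``the main term is $-c\,d^{-s}$'' is therefore a sign slip. Since the curvature errors are of lower order, your method actually proves $c_1^{-1}\rho^{-s}\le(-\Delta)^sW_s\le c_1\rho^{-s}$ in $A_{\delta_1}$, which contradicts (iv) as written.

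The paper reaches the stated sign only through the claim $\tilde{\mathbf R}_1\to-c\,\psi_s'(s)$. But after integrating out $z'$, the integrand of $\tilde{\mathbf R}_1$ is $-\frac{(1+t)^s\ln(1+t)+\chi_{(0,1)}(t)|1-t|^s\ln|1-t|}{t^{1+2s}}$, which is exactly the integrand defining $\psi_s'(s)$. So in fact $\tilde{\mathbf R}_1\to+c\,\psi_s'(s)<0$.

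The positive sign is also the one the later estimate $\mathbb{G}_{s,\Omega}[\rho^{-s}]\asymp\rho^s\ln\frac1\rho$ needs, since that comparison is made against $(-\Delta)^s\mathbb{G}_{s,\Omega}[\rho^{-s}]=\rho^{-s}>0$. So state (iv) with $(-\Delta)^sW_s$ in place of $-(-\Delta)^sW_s$. With that correction and the corrected constant, your argument goes through.
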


To prove Proposition \ref{pr 2.1.0}, we need the following auxiliary lemma.

\begin{lemma}\label{lm 2.1}
For $\tau\in (0,2s)$, let
\begin{equation}\label{3.1.4}
\psi_s(\tau)=\int^{+\infty}_{0}\frac{2-(1+t)^{\tau} -|1-t|^{\tau}\chi_{(0,1)}(t)}{t^{1+2s}}dt,
\end{equation}
 where $\chi_{A}$ is the characteristic function of  the set $A$.

Then the function $\psi_s$ is strictly concave in $(0,2s)$ and
$$\psi_s(s)=0,\qquad  \psi_s(\tau)>0\ {\rm for}\ \tau\in (0,s), \qquad \psi_s(\tau)<0\ {\rm for}\ \tau\in (s,2s).$$
Moreover,
$$\psi_s'(s)=-\int^{+\infty}_{0}\frac{ (1+t)^s\log (1+t)+|1-t|^s \chi_{(0,1)}(t) \log |1-t| }{t^{1+2s}}dt<0. $$
 \end{lemma}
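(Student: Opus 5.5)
The plan is to work directly with the integrand
$$f_\tau(t)=\frac{2-(1+t)^{\tau}-|1-t|^{\tau}\chi_{(0,1)}(t)}{t^{1+2s}}.$$
First I check convergence and that differentiation in $\tau$ under the integral is allowed. Near $t=0$ we have $2-(1+t)^\tau-(1-t)^\tau=-\tau(\tau-1)t^2+O(t^4)$, so the numerator is $O(t^2)$, the integrand is $O(t^{1-2s})$, and it is integrable since $s<1$. At $t=1$ the only non-smooth term is $|1-t|^\tau$, which is integrable, and so are its $\tau$-derivatives $|1-t|^\tau(\log|1-t|)^k$. As $t\to\infty$ the numerator behaves like $-t^\tau$, so the integrand is $O(t^{\tau-1-2s})$; this is integrable exactly because $\tau<2s$. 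On compact subsets of $(0,2s)$ the same bounds, multiplied by powers of $\log(1+t)$ or $\log|1-t|$, are uniform. Dominated convergence then gives $\psi_s\in C^2(0,2s)$, with the derivatives obtained by differentiating under the integral.

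Concavity is then immediate:
$$\psi_s''(\tau)=-\int_0^\infty\frac{(1+t)^\tau\log^2(1+t)+|1-t|^\tau\log^2|1-t|\,\chi_{(0,1)}(t)}{t^{1+2s}}\,dt<0,$$
because the integrand is nonnegative and not identically zero; near $t=0$ it is $O(t^{1-2s})$, so the integral converges. The formula for $\psi_s'(s)$ follows the same way by differentiating once and setting $\tau=s$.

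The key step, and the main obstacle, is showing $\psi_s(s)=0$.
\begin{itemize}
\item My first attempt is the identity $(-\Delta)^s (x_+)^s=0$ on $(0,\infty)$ in one dimension. At $x=1$ the 1D fractional Laplacian of $(x_+)^s$ is a positive multiple of
$$\int_0^\infty\frac{2-(1+t)^s-(1-t)_+^s}{t^{1+2s}}\,dt,$$
since $(1-t)_+^s$ vanishes for $t>1$. This is precisely $\psi_s(s)$, so $\psi_s(s)=0$ follows from this known fact (Ros-Oton--Serra, or via the Caffarelli--Silvestre extension).
\item If a self-contained proof is preferred, I would substitute $t\mapsto 1/t$ on $(1,\infty)$ and use the Beta-function identities. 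The aim is to pair the contributions from $(0,1)$ and $(1,\infty)$ so that they cancel when $\tau=s$; this is the computational core.
\end{itemize}

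For the sign of $\psi_s'(s)$, I note that $\psi_s(\tau)\to 0$ as $\tau\to0^+$. Indeed, the numerator tends to $0$ pointwise for $t\ne1$ and stays dominated by $C\min(t^2,t^{\epsilon})/t^{1+2s}$ for small $\tau$. Hence $\psi_s(0^+)=0=\psi_s(s)$. Strict concavity of a function vanishing at $0$ and at $s$ gives $\psi_s>0$ on $(0,s)$. It also forces the derivative at the right zero to be negative: $\psi_s'(s)<\frac{\psi_s(s)-\psi_s(0)}{s}=0$ by strict concavity. Finally, for $\tau\in(s,2s)$, concavity gives $\psi_s(\tau)\le \psi_s(s)+\psi_s'(s)(\tau-s)<0$.
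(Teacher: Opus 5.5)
Your route is the paper's: differentiate under the integral to get $\psi_s''<0$, obtain $\psi_s(s)=0$ from the one-dimensional identity $(-\Delta)^s(x_+)^s=0$ on $(0,\infty)$ (the paper quotes Theorem 5.1 of [CLO] for this), and read off the signs from concavity plus the value of $\psi_s$ at the left endpoint. However, the endpoint value you use is wrong.

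You claim $\psi_s(\tau)\to0$ as $\tau\to0^+$ because the numerator tends to $0$ for $t\neq1$. That fails for $t>1$. There $\chi_{(0,1)}(t)=0$, so the numerator is $2-(1+t)^\tau\to1$, consistent with your own remark that it behaves like $-t^\tau$ at infinity. Your domination bound is fine and actually gives
\[
\lim_{\tau\to0^+}\psi_s(\tau)=\int_1^{\infty}\frac{dt}{t^{1+2s}}=\frac1{2s}>0,
\]
which is exactly the value $\psi_s(0)=\frac1{2s}$ the paper uses.

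The repair is immediate, and the rest of your argument survives. Extend $\psi_s$ continuously to a concave function on $[0,2s)$. Then $\psi_s(0)=\frac1{2s}$ and $\psi_s(s)=0$ give:
\begin{itemize}
\item $\psi_s>0$ on $(0,s)$;
\item $\psi_s'(s)\le\frac{\psi_s(s)-\psi_s(0)}{s}=-\frac1{2s^2}<0$;
\item $\psi_s(\tau)\le\psi_s'(s)(\tau-s)<0$ on $(s,2s)$.
\end{itemize}
With the correct value, plain concavity suffices and you get a quantitative bound on $\psi_s'(s)$.

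A smaller point on differentiating under the integral: near $t=0$, the $\tau$-derivatives of the numerator are not the numerator times powers of a logarithm. Each of $(1\pm t)^\tau\log(1\pm t)$ is only $O(t)$, which leaves a non-integrable $t^{-2s}$ when $s\ge\frac12$. The $O(t^2)$ bound you need, uniform for $\tau$ in compact sets, comes from the fact that $\partial_\tau^k\big[(1+t)^\tau+(1-t)^\tau\big]$ is smooth and even in $t\in(-1,1)$ and vanishes at $t=0$ for $k\ge1$.
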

\noindent{\bf Proof.} Denote
$$g_\tau(t)=\left\{\arraycolsep=1pt
\begin{array}{lll}
t^\tau,  \quad & t>0,
 \\[2mm]
 \phantom{ }
 0,   \quad  &  t\leq 0,
\end{array}
\right.$$
by direct computation,  for $l>0$, we have that
 \begin{align*}
(-\Delta)^s_{\R}\, g_\tau(l)&=\frac{c_{1,s}}{2}  \int_{\R}\frac{2l^{\tau}-g_\tau(l+t)-g_\tau(l-t)}{t^{1+2s}}dt
\\&=c_{1,s}  \int_{0}^\infty\frac{2l^{\tau}- (l+t)^\tau - |l-t|^\tau\chi_{(0,l)}(t) }{t^{1+2s}}dt
\\&= c_{1,s}l^{\tau-2s}   \int^{+\infty}_{0}\frac{2-(1+t)^{\tau}-|1-t|^{\tau} \chi_{(0,1)}(t)}{t^{1+2s}}dt= c_{1,s}l^{\tau-2s} \psi_s(\tau).
\end{align*}
  It is shown in \cite[Theorem 5.1]{CLO} that  $g_s$ is a solution of $(-\Delta)^{s}_{\R}\, g_s=0$ in $(0,+\infty)$, which is equivalent that $\psi_s(s)=0$.
  Moreover, it holds
$$\psi_s(0)=\int^{+\infty}_{1}\frac{1}{t^{1+2s}}dt=\frac1{2s}>0,$$
$$\psi_s'(\tau)=-\int_0^{+\infty}\frac{(1+t)^{\tau}\log(1+t)+ |1-t|^{\tau}\chi_{(0,1)}(t)\log|1-t|}{t^{1+2s}}dt$$
and
$$\psi_s''(\tau)=-\int_0^{+\infty}\frac{(1+t)^{\tau}[\log(1+t)]^2 + |1-t|^{\tau}[\chi_{(0,1)}(t)\log|1-t|]^2}{t^{1+2s}}dt<0,$$
which implies that $\psi_s$ is strictly concave.
Combining $\psi_s(s)=0$ with $\psi_s(0)>0$,
we observe that $s$ is the unique zero point of $\psi_s$ in $(0,2s)$,  $\psi_s>0$ in $(0,s)$, $\psi_s<0$ in $(s,2s)$ and $\psi_s'(s)<0$.
   \hfill$\Box$

\bigskip

Now we are ready to give the proof of Proposition \ref{pr 2.1.0}.

\smallskip

\noindent{\bf Proof of Proposition \ref{pr 2.1.0}.} For $\tau\in(0,2s)$,  we adopt the  arguments in \cite[Proposition 3.1]{CFQ},
which dealt with the case $\tau\in(-1,0)$.  However, the big difference is the monotonicity for $\tau>0$,
so we provide the details of the proof here and also show the estimate for critical case $\tau=s$.

 Since the boundary of $\Omega $ is compact, we only need to show the corresponding inequality holds
in a neighborhood of any point $\bar x\in\partial\Omega$, without loss of generality, we may assume that $\bar x=0$. Given $0<\eta\le \delta$, let
$$
Q_\eta=\{z=(z_1,z')\in\R\times\R^{N-1}:\  |z_1|<\eta,|z'|<\eta\}$$
and  $Q_\eta^+=\{z\in Q_\eta:\  z_1>0\}$.
Let $\varphi:\R^{N-1}\to\R$  be a $C^2$
function such that $(z_1, z')\in \Omega\cap Q_\eta$ if and only if $z_1\in (\varphi(z'),\eta)$,
 moreover,   $(\varphi(z'),z')\in\partial\Omega$ for all $|z'|<\eta$. We further assume that  $(-1,0,\cdot\cdot\cdot,0)$ is the outer normal
vector of $\Omega$ at $\bar x$.

In the proof of our inequalities, we let $x=(x_1,0)$, with $x_1\in(0,\eta/4)$, be a generic point in $A_{\eta/4}$.
Then $|x-\bar x|=\rho(x)=x_1$. Denote $\delta(V_\tau,x,y)=2V_\tau(x)-V_\tau(x+y)-V_\tau(x-y)$,
we have that
\begin{eqnarray*}
\frac2{c_{N,s}}  (-\Delta)^s V_\tau(x)=  \int_{Q_{
\eta}}\frac{\delta(V_\tau,x,y)}{|y|^{N+2s}}dy+  \int_{\R^N\setminus
Q_{ \eta}}\frac{\delta(V_\tau,x,y)}{|y|^{N+2s}}dy
\end{eqnarray*}
and
\begin{equation*}
\Big|\int_{\R^N\setminus
Q_{ \eta}}\frac{\delta(V_\tau,x,y)}{|y|^{N+2s}}dy\Big|
\le c_2,
\end{equation*}
where the constant $c_2$ is independent of $x$. Now we do the estimate for
\begin{equation}\label{cotadelta1}
\cE(x_1):=-\int_{Q_{
\eta}}\frac{\delta(V_\tau,x,y)}{|y|^{N+2s}}dy.
\end{equation}
We divide the proof into two steps.

\smallskip

{\bf Step 1:  Lower bounds for $\cE(x_1)$.}
It follows by \cite[Lemma 3.1]{CFQ}   that
there exist $\eta\in(0,\delta_0]$ and $c_3>0$ such that
$$
\rho(z)\ge (z_1-\varphi(z'))(1-c_3|z'|^2),\quad  \ \forall \, z=(z_1,z')\in  Q_\delta\cap\Omega
$$
and then
\begin{equation}\label{1.3.5}
\rho(z)^\tau\ge |z_1-\varphi(z')|^\tau (1-c_3|z'|^2)^{\tau}.
\end{equation}

%When $0<\eta\le \delta_0/2$, for
For $y\in Q_{\eta}$, we note that
 $x\pm y\in Q_\delta $ and then $x\pm y\in Q_\delta\cap\Omega$ if and only if
$\varphi(\pm y')<x_1\pm y_1<\delta$ and $|y'|<\delta$.
Combining with (\ref{1.3.5}), there exist $0<C_0<C_1$ such that
\begin{align*}
V_\tau(x+ y)
 \geq \rho(x+ y)^\tau &\ge\big(x_1+ y_1-\varphi(y')\big)^\tau (1-c_3|y'|^2)^{\tau}
\\& \geq  \big(x_1+ y_1-\varphi(y')\big)^\tau (1-c_4\tau |y'|^2), \qquad  x+y\in Q_{\delta}\cap\Omega
\end{align*}
and
$$
  V_\tau(x- y)=\rho(x- y)^\tau\ge\big(x_1- y_1-\varphi(y')\big)^\tau (1-c_4\tau |y'|^2), \qquad x-y\in Q_{\delta}\cap\Omega.
$$

For $y\in Q_{\eta}$, if $x\pm y\in Q_\delta\cap\Omega^c$, by definition of $V_\tau$, it holds that
$V_\tau(x\pm y)=0.$

To be convenient for the analyze, let us denote
\begin{equation*}
I_+=(\varphi( y')-x_1,\eta-x_1), \  \qquad I_-=(x_1-\eta,x_1-\varphi(-y'))
\end{equation*}
and the functions
\begin{align*}
& {\bf I}(y) = \chi_{_{I_+}}(y_1)|x_1+y_1-\varphi(y')|^{\tau}+\chi_{_{I_-}}(y_1)|x_1-y_1-\varphi(-y')|^{\tau} -2x_1^\tau,
\\[1mm]
%\tilde I(y)&=&\chi_{(x_1-\eta,x_1)}(y_1)|x_1-y_1-\varphi(-y')|^{\tau}+\chi_{(-x_1,\eta-x_1)}(y_1)|x_1+y_1-\varphi(y')|^
%{\tau}-2x_1^\tau
%\label{I2}\\
% \mbox{and}\quad  & &\nonumber \\
& {\bf J}(y_1) = \chi_{_{(x_1-\eta,x_1)}}(y_1)|x_1-y_1|^{\tau} +\chi_{_{(-x_1,\eta-x_1)}}(y_1)|x_1+y_1|^{\tau}-2x_1^\tau, \\[1mm]
& {\bf I}_1(y) = \big( \chi_{_{I_+}}(y_1) -\chi_{_{(-x_1,\eta-x_1)}}(y_1)\big) |x_1+y_1|^\tau,
\\[1mm]
& {\bf I}_{-1}(y) = \big( \chi_{_{I_-}}(y_1) -\chi_{_{(x_1-\eta,x_1)}}(y_1)\big) |x_1-y_1|^\tau,
\\[1mm]
&  {\bf I}_2(y) =  \chi_{_{I_+}}(y_1) \big(|x_1+y_1-\varphi(y')|^\tau - |x_1+y_1|^\tau\big),
\\[1mm]
& {\bf I}_{-2}(y) =   \chi_{_{I_-}}(y_1) \big(|x_1-y_1-\varphi(-y')|^\tau - |x_1-y_1|^\tau\big),
 \end{align*}
where $\chi_{_A}$ denotes the characteristic function of the set $A$. Observe that
\begin{eqnarray}\label{Ex1}
   \cE(x_1)\ge \int_{Q_{\eta}} \frac{{\bf I}(y)}{|y|^{N+2s}}dy-E_0(x_1)
   =\int_{Q_{\eta}} \frac{{\bf J}(y_1)}{|y|^{N+2s}}dy-E_0(x_1)+E_1(x_1)+E_2(x_1),
\end{eqnarray}
with
\begin{equation*}\label{E0}
E_0(x_1)=\tau c_4 \int_{Q_\eta} \frac{\big( |x_1+y_1-\varphi(y')|^{\tau}+ |x_1-y_1-\varphi(y')|^{\tau}\big)|y'|^2 }{|y|^{N+2s}}dy
\end{equation*}
  and
\begin{equation}\label{Ei}
E_i(x_1)=\int_{Q_{\eta}} \frac{{\bf I}_i(y)+{\bf I}_{-i}(y)}{|y|^{N+2s}}dy,\quad i=1,2.
\end{equation}
Then
 \begin{eqnarray*}\label{A10}
 0\leq  E_0(x_1)  \leq 2c_5\int_{Q_{\eta}}\frac{|y'|^2\eta^\tau}{|y|^{N+2s}}dy  \leq   c_6\eta^\tau \int_{Q_{\eta}} \frac{1 }{|y|^{N+2s-2}}dy
 \leq   c_7\eta^{\tau+2-2s}.
\end{eqnarray*}

Note that
$$
\int_{Q_{\eta}}\frac{{\bf J}(y_1)}{|y|^{N+2s}}dy
=x_1^{\tau-2s}\int_{Q_{\frac{\eta}{x_1}}}    \frac{{\bf J}(x_1z_1)x_1^{-\tau}}{|z|^{N+2s}}dz= 2x_1^{\tau-2s}({\bf R}_1-{\bf R}_2),
$$
where
$$
{\bf R}_1=\int_{Q_{\frac{\eta}{x_1}}^+}\frac{\chi_{_{(0,1)}}(z_1)|1-z_1|^{\tau}
+(1+z_1)^{\tau}-2}{|z|^{N+2s}}dz
$$
and
$$
{\bf R}_2=\int_{Q_{\frac{\eta}{x_1}}^+}
\frac{\chi_{_{(\frac{\eta}{x_1}-1,\frac{\eta}{x_1})}}(z_1)(1+z_1)^{\tau}}{|z|^{N+2s}}dz.
$$

Since
 \begin{align*}
&\quad\  \int_{\R^N_+}\frac{\chi_{_{(0,1)}}(z_1)|1-z_1|^{\tau}+(1+z_1)^{\tau}-2}{|z|^{N+2s}}dz\\
&= \int_0^{+\infty}\frac{\chi_{_{(0,1)}}(z_1)|1-z_1|^{\tau}+(1+z_1)^{\tau}-2}{z_1^{1+2s}}dz_1
\int_{\R^{N-1}}\frac{1}{(|z'|^2+1)^{\frac{N+2s}{2}}}dz'
\\
&=- c_8\,\psi_s(\tau)
\end{align*}
and
 \begin{align*}
 \int_{(Q_{\frac{\eta}{x_1}}^+)^c}\frac{\chi_{_{(0,1)}}(z_1)|1-z_1|^{\tau}
+(1+z_1)^{\tau}-2}{|z|^{N+2s}}dz =c_9
{x_1^{2s-\tau}}(1+ o(1)),
\end{align*}
then
 \begin{eqnarray}\label{R1}
 {\bf R}_1=-c_8 \psi_s(\tau)-
{c_{9}x_1^{2s-\tau}- o(x_1^{2s-\tau})}.
\end{eqnarray}
On the other hand, by direct computation, it yields that
\begin{eqnarray} \label{R2}
\quad {\bf R}_2
&=&\int_{\frac{\eta}{x_1}-1}^{\frac{\eta}{x_1}}\frac{(1+z_1)^{\tau}}{z_1^{1+2s}}
\int_{\tilde{B}_{\frac{\eta}{x_1}}}\frac1{(1+|z'|^2)^{\frac{N+2s}2}}dz'dz_1\le
%\\&\le&C\int_{\frac{\eta}{x_1}-1}^{\frac{\eta}{x_1}}z_1^{\tau-2s-1}dz_1
%\le
c_{10}x_1^{2s-\tau+1},
\end{eqnarray}
here and in what follows we denote by $B_\sigma$ the ball of radius $\sigma$ in $\R^{N-1}$.
From \equ{R1} and \equ{R2}, we obtain that
\begin{eqnarray*}\label{EQ}
\int_{Q_{\eta}}\frac{{\bf J}(y_1)}{|y|^{N+2s}}dy=c_{11}x_1^{\tau-2s}
{\big(-\psi_s(\tau)+c_{12}x_1^{2s-\tau}+ o(x_1^{2s-\tau})\big)}.
\end{eqnarray*}

 Now we   consider the  term ${\bf I}_1(y)$, the estimate for ${\bf I}_{-1}(y)$ is similar.
Observe that
\begin{align*}
 \int_{Q_{\eta}} \frac{{\bf I}_1(y)}{|y|^{N+2s}}dy
=
-\int_{\tilde{B}_\eta}\int^{\varphi(y')-x_1}_{-x_1}\frac{|x_1+y_1|^\tau}{|y|^{N+2s}}dy_1dy'
 = -x_1^{\tau-2s} F_1(x_1),\label{F1}
%\\
%\nonumber&\ge& -\int_{B_\eta}\int^{\varphi_+(y')-x_1}_{-x_1}\frac{|x_1+y_1|^\tau}{|y|^{N+2s}}dy_1dy'\\
%&=& -x_1^{\tau-2s}F_1(x_1),\label{F1}
\end{align*}
where
\begin{eqnarray}\label{intF1}
 F_1(x_1)= \int_{\tilde{B}_\frac{\eta}{x_1}}\int^{\frac{\varphi(x_1z')}{x_1}}_{0}
\frac{|z_1|^\tau}{((z_1-1)^2+|z'|^2)^{(N+2s)/2}}dz_1dz'.
\end{eqnarray}
Let $\varphi_-(y')=\min\{\varphi(y'),0\}$ and $\varphi_+(y')=\varphi(y')-\varphi_-(y')$.
Note that $0\le \varphi_+(y')\le c_{13}|y'|^2$ for $|y'|\le \eta$, for some $(z_1,z')$ satisfying
 $0\le z_1\le \frac{\varphi_+(x_1 z')}{x_1}$ and $|z'|\le\frac{\eta}{x_1}$, then
\begin{eqnarray*}
\label{cota}(1-z_1)^2+|z'|^2\ge \frac{1}{4}(1+|z'|^2).
\end{eqnarray*}
Thus,
\begin{align*}
 F_1(x_1)&\le c_{14}\int_{\tilde{B}_\frac{\eta}{x_1}}\int^{\frac{\varphi_+(x_1z')}{x_1}}_{0}
\frac{|z_1|^\tau}{(1+|z'|^2)^{(N+2s)/2}}dz_1dz'\\
&\le   c_{15}x_1^{\tau+1}   \int_{\tilde{B}_\frac{\eta}{x_1}} \frac{|z'|^{2(\tau+1)}}{(1+|z'|^2)^{(N+2s)/2}}dz' \\
&\le  c_{16}x_1^{\tau+1}(x_1^{-2\tau+2s-1}+1)\le c_{17}x_1^{\min\{\tau+1,2s-\tau\}},
\end{align*}
so
\begin{equation}\label{E1(x1)}
E_1(x_1)\ge -c_{18}x_1^{\tau-2s}x_1^{\min\{\tau+1,2s-\tau\}}.
\end{equation}

Now we do the estimate  of $E_2(x_1)$.
Since
\begin{align*}
\int_{Q_{\eta}} \frac{{\bf I_2}(y)}{|y|^{N+2s}}dy&= \int_{\tilde{B}_\eta}\int_{\varphi(y')-x_1}^{\eta-x_1}\frac{|x_1+y_1-\varphi(y')|^{\tau}-|x_1+y_1|^{\tau}}
{(y_1^2+|y'|^2)^{\frac{N+2s}2}}dy_1dy' \nonumber \\
&\ge \int_{\tilde{B}_\eta}\int_{\varphi_+(y')-x_1}^{\eta-x_1}\frac{|x_1+y_1-\varphi_+(y')|^{\tau}-|x_1+y_1|^{\tau}}
{(y_1^2+|y'|^2)^{\frac{N+2s}2}}dy_1dy'\nonumber \\
&= \int_{\tilde{B}_\eta}\int_{\varphi_+(y')}^{\eta}\frac{|z_1-\varphi_+(y')|^{\tau}-|z_1|^{\tau}}
{((z_1-x_1)^2+|y'|^2)^{\frac{N+2s}2}}dz_1dy'\nonumber \\
&\ge \int_{\tilde{B}_\eta}\int_{0}^{\eta}\frac{|z_1-\varphi_+(y')|^{\tau}-|z_1|^{\tau}}
{((z_1-x_1)^2+|y'|^2)^{\frac{N+2s}2}}dz_1dy' \nonumber \\
&  + \int_{\tilde{B}_\eta}\int_{\varphi_+(y')}^{0}\frac{-|z_1|^{\tau}}
{((z_1-x_1)^2+|y'|^2)^{\frac{N+2s}2}}dz_1dy'\nonumber
\\[1mm] &=
E_{21}(x_1)+E_{22}(x_1).
\end{align*}
Note that $E_{22}(x_1)$ is similar to $ F_1(x_1)$. For the estimate of $E_{21}(x_1)$, we use integration by parts to obtain
\begin{align*}
E_{21}(x_1)&=
\frac1{\tau+1}\int_{\tilde{B}_\eta} \left\{\frac{(\eta-\varphi_+(y'))^{\tau+1}-\eta^{\tau+1}}
{((\eta-x_1)^2+|y'|^2)^{\frac{N+2s}2}}    -\frac{(-\varphi_+(y'))^{\tau +1}}{(x_1^2+|y'|^2)^{\frac{N+2s}{2}}} \right\}  dy'\\
&\quad +
\frac{N+2s}{\tau+1}\int_{\tilde{B}_\eta}\int_{0}^{\eta}\frac{(z_1-\varphi_+(y'))^{\tau+1}-(z_1)^{\tau+1} }
{((z_1-x_1)^2+|y'|^2)^{\frac{N+2s}2+1}}(z_1-x_1)dz_1dy'\\
&=  A_1+A_2.
\end{align*}
For the first integral, we have that
\begin{align*}
A_1&\ge
\frac1{\tau+1}\int_{\tilde{B}_\eta} \left\{\frac{-\eta^{\tau+1}}
{((\eta-x_1)^2+|y'|^2)^{\frac{N+2s}2}}    -\frac{(-\varphi_+(y'))^{\tau +1}}{(x_1^2+|y'|^2)^{\frac{N+2s}{2}}} \right\}  dy'
\\
&\ge
-c_{19}-c_{20}\int_{\tilde{B}_\eta}\frac{|y'|^{2\tau+2}}{(x_1^{2}+|y'|^2)^{\frac{N+2s}2}}dy'
\\
& \ge -c_{21}x_1^{\tau-2s+\tau+1} -\bar c_{19}.
\end{align*}
For the second integral, since $\tau\in(0,2s)$ and
$(z_1-\varphi_+(y'))^{\tau+1} -|z_1|^{\tau+1}<0$,  we have that
\begin{align*}%\label{q 3.24}
A_2&\ge \frac{N+2s}{\tau+1}\int_{\tilde{B}_\eta}\int^{\eta}_{x_1}\frac{(z_1-\varphi_+(y'))^{\tau+1}
-|z_1|^{\tau+1}}{((z_1-x_1)^2+|y'|^2)^{\frac{N+2s}2+1}}(z_1-x_1)dz_1dy'
\\[4mm]&\ge (N+2s)\int_{\tilde{B}_\eta}\int^{\eta}_{x_1}\frac{-\varphi_+(y')z_1^\tau}{((z_1-x_1)^2+|y'|^2)^{\frac{N+2s}2+1}}(z_1-x_1)dz_1dy'
\\[4mm]&\ge
{-c_{22}x_1^{\tau-2s+1}}\int_{\tilde{B}_{\eta/x_1}}\int^{\frac{\eta}{x_1}}_1\frac{|z'|^2z_1^\tau}{((z_1-1)^2+|z'|^2)^{\frac{N+2s}2+1}}(z_1-1)dz_1dz'
\\[4mm]&\ge  {-c_{23}x_1^{\tau-2s+1}x_1^{2s-\tau}}=-c_{23} x_1,
\end{align*}
where $c_{22},c_{23}>0$ independent of $x_1$ and the second inequality holds by the fact that
$a^{\tau+1}-b^{\tau+1}\geq(\tau+1)(a-b)b^\tau$ for
$a>0,b\geq0$.
Then
\begin{equation*}
E_2(x_1)\ge -c_{24}x_1^{\tau-2s}x_1^{\min\{\tau+1,2s-\tau\}}.
\end{equation*}

As a consequence, we obtain that
\begin{equation*}
\cE(x_1)\ge c_{25}x_1^{\tau-2s} \Big[ -\psi_s(\tau)-c_{26}x_1^{2s-\tau} -c_{27}x_1^{\min\{\tau+1,2s-\tau\}} )\big)\Big] .
\end{equation*}

{\bf Step 2:  Upper bounds for $\cE(x_1)$.}
 Note that
$$
\rho(z)\le  z_1-\varphi(z'),  \qquad\ \forall \, z= (z_1,z')\in Q_\eta\cap \Omega
$$
and for $\tau\in(0,2s)$
\begin{equation*}
\rho^\tau(z)\le (z_1-\varphi(z'))^\tau, \qquad\ \forall \, z= (z_1,z')\in Q_\eta\cap \Omega.
\end{equation*}
Then, for $x\pm y\in Q_\eta\cap \Omega$, we have that
\begin{eqnarray*}
V_\tau(x\pm y)=\rho(x\pm y)^\tau\le (x_1\pm
y_1-\varphi(\pm y'))^\tau .
\end{eqnarray*}
Therefore,
\begin{eqnarray*}
 \cE(x_1)
 \le \int_{Q_\eta}\frac{{\bf I}(y)}{|y|^{N+2s}}dy =\int_{Q_\eta}\frac{{\bf J}(y)}{|y|^{N+2s}}dy+E_1(x_1)+E_2(x_1),
\end{eqnarray*}
where ${\bf I}$,  ${\bf J}$,  $E_1$ and $E_2$ are given in Step 1.

 Note that $0\le \varphi_+(y')\le c_{28}|y'|^2$ for $|y'|\le \eta$. For $z=(z_1,z')$ satisfying that
 $0\le z_1\le \frac{\varphi_+(x_1 z')}{x_1}$ and $|z'|\le\frac{\eta}{x_1}$, we have that
$(1-z_1)^2+|z'|^2\leq c_{29}(1+|z'|^2)$, also denote $F_1$ as in \equ{intF1},  we have that
$$
F_1(x_1)\ge c_{30} \int_{\tilde{B}_\frac{\eta}{x_1}}\int^{\frac{\varphi_-(x_1z')}{x_1}}_{0}
\frac{|z_1|^\tau}{(1+|z'|^2)^{(N+2s)/2}}dz_1dz'
$$
 and then
\begin{equation*}
E_1(x_1)\le c_{31}x_1^{\tau-2s}x_1^{\min\{\tau+1, {2s-\tau}\}}.
\end{equation*}
We next do the estimate for $E_2(x_1)$,  we
first consider the term ${\bf I}_2(y)$:
\begin{align*}
\int_{Q_{\eta}} \frac{{\bf I}_2(y)}{|y|^{N+2s}}dy&
\le \int_{\tilde{B}_\eta}\int_{\varphi_-(y')}^{\eta}\frac{|z_1-\varphi_-(y')|^{\tau}-|z_1|^{\tau}}
{((z_1-x_1)^2+|y'|^2)^{\frac{N+2s}2}}dz_1dy' =\tilde E_{21}(x_1).
\end{align*}
Using integration by parts, we have that
\begin{align*}
  \tilde E_{21}(x_1)& =
   \frac1{\tau+1}\int_{\tilde{B}_\eta} \left\{\frac{|\eta-\varphi_-(y')|^{\tau+1}-\eta^{\tau+1}}
{((\eta-x_1)^2+|y'|^2)^{\frac{N+2s}2}}    -\frac{|\varphi_-(y')|^{\tau +1}}{((\varphi_-(y')-x_1)^2+|y'|^2)^{\frac{N+2s}{2}}} \right\}  dy'\\
&  \quad +
\frac{N+2s}{\tau+1}\int_{\tilde{B}_\eta}\int_{\varphi_-(y')}^{\eta}\frac{(z_1-\varphi_-(y'))^{\tau+1}-z_1^{\tau+1} }
{((z_1-x_1)^2+|y'|^2)^{\frac{N+2s}2+1}}(z_1-x_1)dz_1dy'\\
&  \le  {\frac1{\tau+1}\int_{\tilde{B}_\eta} \frac{(\eta-\varphi_-(y'))^{\tau+1}-\eta^{\tau+1}}
{((\eta-x_1)^2+|y'|^2)^{\frac{N+2s}2}}   dy'}\\
&\quad +\frac{N+2s}{\tau+1}\int_{\tilde{B}_\eta}\int_{x_1}^{\eta}\frac{ (z_1-\varphi_-(y'))^{\tau+1}-z_1^{\tau+1}   }
{((z_1-x_1)^2+|y'|^2)^{\frac{N+2s}2+1}}(z_1-x_1)dz_1dy'
\end{align*}
and then
\begin{equation*}
 {E_2(x_1)\le c_{32}x_1^{\tau-2s}x_1^{\min\{\tau+1,2s-\tau\}}.}
\end{equation*}

As a consequence,
\begin{equation*}
\cE(x_1)\le c_{33}x_1^{\tau-2s} \Big[ -\psi_s(\tau)+c_{34}x_1^{2s-\tau} +c_{35}x_1^{\min\{\tau+1,2s-\tau\}} \Big]
\end{equation*}
where $c_{33},c_{34},c_{35}>0$ independent of $x_1$.

\medskip

When $\tau \in (s,2s)$, by the fact that  $ \psi_s(\tau)<0$, combining Step 1 with Step2,  it holds that
$$
\frac{-\psi_s(\tau)}{2}  x_1^{\tau-2s}\leq  -(-\Delta)^s V_\tau(x)\leq   -2\psi_s(\tau)x_1^{\tau-2s}.
$$

When $\tau \in (0,s)$, we know that $ \psi_s(\tau)>0$, combining Step 1 with Step2,  it is true that
$$
   \frac{ \psi_s(\tau)}{2} x_1^{\tau-2s}\leq   (-\Delta)^s V_\tau(x)\leq 2\psi_s(\tau) x_1^{\tau-2s}.
$$

\medskip

When $\tau=s$,  since  $ \psi_s(\tau)=0$, we have that
\begin{eqnarray*}
 |(-\Delta)^s V_\tau (x)|\leq  c_{36}(\tau).
\end{eqnarray*}

Finally, we show the  bounds for $(-\Delta)^s W_s$. By the definition of the fractional Laplacian, we have that
\begin{align*}
\frac2{c_{N,s}}  (-\Delta)^s W_s(x)=  \int_{Q_{
\eta}}\frac{\delta(W_s,x,y)}{|y|^{N+2s}}dy+  \int_{\R^N\setminus
Q_{ \eta}}\frac{\delta(W_s,x,y)}{|y|^{N+2s}}dy
\end{align*}
and
\begin{align*}
\Big|\int_{\R^N\setminus
Q_{ \eta}}\frac{\delta(W_s,x,y)}{|y|^{N+2s}}dy\Big|\le c_{37}.
\end{align*}
Denote
$$\cF(x_1)=-\int_{Q_{
\eta}}\frac{\delta(W_s,x,y)}{|y|^{N+2s}}dy. $$

When $x+y\in Q_{\delta}\cap\Omega$, we have that
\begin{align*}
W_s(x+ y)
 \geq \rho(x+ y)^s\ln\frac1{\rho(x+y)} &\ge\big(x_1+ y_1-\varphi(y')\big)^s\ln\frac1{x_1+ y_1-\varphi(y')}  (1-c_{38}|y'|^2)^s
\\& \geq  \big(x_1+ y_1-\varphi(y')\big)^s\ln\frac1{x_1+ y_1-\varphi(y')} (1-c_{39}  |y'|^2s)
\end{align*}
and
$$
W_s(x- y) \ge\big(x_1- y_1-\varphi(y')\big)^s \ln\frac1{x_1- y_1-\varphi(y')} (1-c_{39}  |y'|^2s) , \quad x-y\in Q_{\delta}\cap\Omega.
$$

  For $y\in Q_{\eta}$, if $x\pm y\in Q_\delta\cap\Omega^c$, by the definition of $W_s$, it holds that
$W_s(x\pm y)=0.$

Let
\begin{equation*}
I_+=(\varphi( y')-x_1,\eta-x_1), \  \qquad I_-=(x_1-\eta,x_1-\varphi(-y'))
\end{equation*}
and the functions
\begin{align*}
&{\bf \tilde{I}}(y) = \chi_{_{I_+}}(y_1)|x_1+y_1-\varphi(y')|^s\ln\frac1{x_1+ y_1-\varphi(y')}
\\&\qquad \qquad  \qquad  \qquad \qquad  \quad +\chi_{_{I_-}}(y_1)|x_1-y_1-\varphi(-y')|^s\ln\frac1{x_1- y_1-\varphi(y')}  -2x_1^s\ln\frac1{x_1},
\\[1mm]
%\tilde I(y)&=&\chi_{(x_1-\eta,x_1)}(y_1)|x_1-y_1-\varphi(-y')|^{\tau}+\chi_{(-x_1,\eta-x_1)}(y_1)|x_1+y_1-\varphi(y')|^
%{\tau}-2x_1^\tau
%\label{I2}\\
% \mbox{and}\quad  & &\nonumber \\
&{\bf \tilde{J}}(y_1)= \chi_{_{(x_1-\eta,x_1)}}(y_1)|x_1-y_1|^s\ln\frac1{x_1-y_1 }+\chi_{_{(-x_1,\eta-x_1)}}(y_1)|x_1+y_1|^s\ln\frac1{x_1+y_1 }-2x_1^s\ln\frac1{x_1}, \\[1mm]
&{\bf \tilde{I}}_1(y)= \big( \chi_{_{I_+}}(y_1) -\chi_{_{(-x_1,\eta-x_1)}}(y_1)\big) |x_1+y_1|^s\ln\frac1{x_1+y_1},
\\[1mm]
&{\bf \tilde{I}}_{-1}(y)= \big( \chi_{_{I_-}}(y_1) -\chi_{_{(x_1-\eta, x_1)}}(y_1)\big) |x_1-y_1|^s\ln\frac1{x_1-y_1},
\\[1mm]
& {\bf \tilde{I}}_2(y) =
 \chi_{_{I_+}}(y_1) \big(|x_1+y_1-\varphi(y')|^s\ln\frac1{x_1+ y_1-\varphi(y')} -
 |x_1+y_1|^s\ln\frac1{x_1+y_1}\big),
\\[1mm]
& {\bf \tilde{I}}_{-2}(y) =
 \chi_{_{I_-}}(y_1) \big(|x_1-y_1-\varphi(-y')|^s\ln\frac1{x_1- y_1-\varphi(-y')} -
 |x_1-y_1|^s\ln\frac1{x_1-y_1}\big).
 \end{align*}
Observe that
\begin{eqnarray*}
   \cF(x_1)\ge \int_{Q_{\eta}} \frac{{\bf \tilde{I}}(y)}{|y|^{N+2s}}dy-\tilde{E}_0(x_1) = \int_{Q_{\eta}} \frac{{\bf \tilde{J}}(y_1)}{|y|^{N+2s}}dy-\tilde{E}_0(x_1)+\tilde{E}_1(x_1)+\tilde{E}_2(x_1),
\end{eqnarray*}
where
$$
\tilde{E}_0(x_1)=s \, c_{39}\int_{Q_\eta} \frac{\Big(|x_1+y_1-\varphi(y')|^s\ln\frac{1}{|x_1+y_1-\varphi(y')|}+|x_1-y_1-\varphi(y')|^s\ln \frac{1}{|x_1-y_1-\varphi(y')|}\Big)|y'|^2 }{|y|^{N+2s}}dy
$$
 and
\begin{equation*}
\tilde{E}_i(x_1)=\int_{Q_{\eta}} \frac{{\bf \tilde{I}}_i(y)+{\bf \tilde{I}}_{-i}(y)}{|y|^{N+2s}}dy,\quad i=1,2.
\end{equation*}

 Changing variables,  we have that
$$
\int_{Q_{\eta}}\frac{{\bf \tilde{J}}(y_1)}{|y|^{N+2s}}dy
=x_1^{-s}\int_{Q_{\frac{\eta}{x_1}}}    \frac{{\bf \tilde{J} }(x_1z_1)x_1^{-s}}{|z|^{N+2s}}dz= 2x_1^{-s}({\bf \tilde{R}}_1-{\bf\tilde{R} }_2),
$$
where
$$
{\bf \tilde{R}}_1=\int_{Q_{\frac{\eta}{x_1}}^+}\frac{\chi_{_{(0,1)}}(z_1)|1-z_1|^s\ln\frac1{1-z_1}
+(1+z_1)^{s}\ln\frac1{1+z_1} }{|z|^{N+2s}}dz\to -c_1\psi_s'(s)\quad {\rm as}\ x_1\to 0^+
$$
and
$$
{\bf \tilde{R}}_2=\int_{Q_{\frac{\eta}{x_1}}^+}
\frac{\chi_{_{(\frac{\eta}{x_1}-1,\frac{\eta}{x_1})}}(z_1)(1+z_1)^s\ln \frac1{1+z_1}}{|z|^{N+2s}}dz.
$$

Similar to the proof of Step 1, we have that
 \begin{equation*}
\cF(x_1)\ge c_{40}\, x_1^{- s} \Big\{-\psi_s'(s)-c_{41}\, x_1^{s}\ln \frac{1}{x_1} \Big\}
\end{equation*}
and by Step 2, it yields that
 \begin{equation*}
\cF(x_1)\le c_{42}\, x_1^{- s} \Big\{ -\psi_s'(s)+c_{43}\,x_1^{s}\ln \frac{1}{x_1} \Big\}.
\end{equation*}
Therefore,
 $$
  \frac{-\psi_s'(s)}{2} x_1^{-s}\leq  -(-\Delta)^s W_s(x)\leq  -2\psi_s'(s) x_1^{-s}.
$$
This ends the proof.  $\hfill\Box$\medskip

\smallskip

Denote by $G_{s,\Omega}$ the fractional Green kernel of $(-\Delta)^s$ in $\Omega\times\Omega$ and by $\mathbb{G}_{s,\Omega} [\cdot]$ the
fractional Green operator defined as
$$\mathbb{G}_{s,\Omega}[f](x)=\int_{\Omega} G_{s,\Omega}(x,y)f(y)dy,\qquad \forall f\in
{L^1(\Omega,\rho^{s}dx)}. $$
The following estimate is important for  study the existence of   minimal solution to problem (\ref{eq 1.1}).

\begin{lemma}\label{lm 2.1}
Let $\tau\in(0,2s)$, $A_{\frac12}=\{x\in\Omega: \ \rho(x)<\frac12\}$.  For $x\in A_{\frac12}$, we denote
\begin{equation}\label{varrho 1}
\varrho_\tau(x)=\left\{ \arraycolsep=1pt
\begin{array}{lll}
 \rho(x)^{\min\{s,\tau\}}\ \ &{\rm if}\ \ \tau\in (0,s)\cup(s,2s),
 \\[2mm]
\rho(x)^s \ln \frac1{\rho(x)} \ & {\rm if}\ \ \tau=s
\end{array}
\right.
\end{equation}
and we make $C^1$ extension of $\varrho_\tau$ into  $\Omega\setminus A_{\frac12}$ such that
$\varrho_\tau>0$ in $\Omega\setminus A_{\frac12}$.

Then there exists $c_\tau>1$ such that
$$\frac1c_\tau\varrho_\tau(x) \le \mathbb{G}_{s,\Omega}[\rho^{\tau-2s}](x)\le c_\tau\varrho_\tau(x),\quad\forall x\in  \Omega.$$
\end{lemma}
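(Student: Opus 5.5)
We will prove the two-sided bound by comparison (maximum principle for $(-\Delta)^s$ with zero exterior data), using Proposition \ref{pr 2.1.0} to build barriers. Since $\rho^{\tau-2s}\in L^1(\Omega,\rho^s dx)$ exactly when $\tau>s-1$, which holds for $\tau\in(0,2s)$, the function $w:=\mathbb{G}_{s,\Omega}[\rho^{\tau-2s}]$ is well defined. It is the unique weak solution of $(-\Delta)^s w=\rho^{\tau-2s}$ in $\Omega$, $w=0$ in $\Omega^c$, and it is positive and continuous in $\Omega$. Interior bounds are trivial: on $\Omega\setminus A_{\delta_1}$ both $w$ and $\varrho_\tau$ are bounded above and below by positive constants. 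So everything reduces to the behavior in $A_{\delta_1}$.

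\emph{Case $\tau\in(0,s)$.} Take $V_\tau$ from (\ref{3.3.1}). By Proposition \ref{pr 2.1.0}(i), $(-\Delta)^sV_\tau\asymp\rho^{\tau-2s}$ in $A_{\delta_1}$. In $\Omega\setminus A_{\delta_1}$, $V_\tau$ is $C^2$, so $|(-\Delta)^sV_\tau|\le C$. We add a multiple of the torsion function $\eta_0=\mathbb{G}_{s,\Omega}[1]\asymp\rho^s$. For $M$ large, $(-\Delta)^s(V_\tau+M\eta_0)\ge c\rho^{\tau-2s}$ in all of $\Omega$, so $w\le C(V_\tau+M\eta_0)\le C'\rho^\tau$, because $\rho^s\le\rho^\tau$. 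For the lower bound we use $V_\tau-M\eta_0$: near the boundary its $(-\Delta)^s$ is at most $c_1\rho^{\tau-2s}+M\le C\rho^{\tau-2s}$, and in the interior $(-\Delta)^s(V_\tau-M\eta_0)\le C-M\le 0$. Hence $\epsilon(V_\tau-M\eta_0)\le w$, which gives $w\ge c\rho^\tau$ in $A_{\delta_2}$ for $\delta_2$ small, since $M\rho^s=o(\rho^\tau)$. This is the nontrivial direction, where the interior defect must be absorbed, and the torsion trick handles it.

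\emph{Case $\tau\in(s,2s)$.} Here the expected decay is $\rho^s$. The lower bound $w\ge c\,\mathbb{G}_{s,\Omega}[\chi_{\Omega\setminus A_{\delta_1}}]\ge c\rho^s$ is standard. For the upper bound, Proposition \ref{pr 2.1.0}(ii) shows that $V_\tau$ is the wrong barrier, since $(-\Delta)^sV_\tau<0$ near the boundary. Instead we use $\Phi=M\eta_0-V_\tau$. Then $(-\Delta)^s\Phi=M+(-(-\Delta)^sV_\tau)\ge M+c\rho^{\tau-2s}$ near the boundary, and $\ge M-C>0$ in the interior. So $w\le C\Phi\le CM\rho^s$.

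\emph{Case $\tau=s$.} The expected decay is $\rho^s\ln\frac1\rho$. We use $W_s$ from (\ref{3.3.1a}) and Proposition \ref{pr 2.1.0}(iv), which gives $-(-\Delta)^sW_s\asymp\rho^{-s}$ in $A_{\delta_1}$. The barriers are $\pm W_s+M\eta_0$, arranged analogously to the case $\tau\in(0,s)$. For the upper bound, $(-\Delta)^s(M\eta_0-W_s)\ge c\rho^{-s}$ near the boundary. This only gives $w\le C\rho^s$ near the boundary if $-W_s$ is used, and that is wrong, so the sign needs care.

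The main obstacle is exactly this sign issue in the critical case. Here $W_s$ is a subsolution-type function, so it serves only the lower bound: $(-\Delta)^s(\epsilon W_s)\le 0\le\rho^{-s}$ near the boundary. For the upper bound I would try the barrier $W_s+M\eta_0$ combined with $V_{\tau'}$ for some $\tau'<s$, or else compare directly with $\mathbb{G}$ estimates. Failing that, I would use the Green kernel bound $G_{s,\Omega}(x,y)\le C\min\{|x-y|^{2s-N},\rho(x)^s\rho(y)^s|x-y|^{-N}\}$ and integrate $\rho(y)^{-s}$ directly, which yields $\rho(x)^s\ln\frac1{\rho(x)}$. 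The direct kernel integration also cross-checks the other cases.
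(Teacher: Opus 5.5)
Your proof is correct. For $\tau\neq s$ it is essentially the paper's argument. The barriers are $V_\tau$ for $\tau<s$ and $\mathbb{G}_{s,\Omega}[1]-V_\tau$ for $\tau>s$, supplied by Proposition~\ref{pr 2.1.0}, followed by comparison. The paper compares only in $A_{\delta_2}$, using that both functions are positive and bounded on $\Omega\setminus A_{\delta_2}$; you instead add multiples of $\eta_0=\mathbb{G}_{s,\Omega}[1]$ and compare on all of $\Omega$. Two small remarks. Near the boundary, $(-\Delta)^s(V_\tau-M\eta_0)\le c_1\rho^{\tau-2s}-M$, not $+M$. Also, $\rho\le\frac12$ forces $\rho^{\tau-2s}\ge 1$ on $\Omega$, so $\epsilon V_\tau$ is already a subsolution and $w\ge \eta_0$ holds trivially; the torsion correction is only needed for the upper bounds.

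The genuine difference is the critical case $\tau=s$, and the obstruction you found there comes from a sign slip in Proposition~\ref{pr 2.1.0}$(iv)$.

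\begin{itemize}
\item \emph{The correct sign.} Differentiating $(-\Delta)^s x_+^\tau=c_{1,s}\psi_s(\tau)x^{\tau-2s}$ in $\tau$ at $\tau=s$ gives $(-\Delta)^s\big(x_+^s\ln\frac1x\big)=-c_{1,s}\psi_s'(s)x^{-s}>0$. In the paper's own computation, $\tilde{\bf R}_1$ converges to a positive multiple of $\psi_s'(s)$, not of $-\psi_s'(s)$. So the correct statement is $\frac1{c_1}\rho^{-s}\le(-\Delta)^sW_s\le c_1\rho^{-s}$.
\item \emph{The printed sign cannot hold.} With it, $M\eta_0-W_s$ would be a supersolution vanishing outside $\Omega$ yet negative near $\partial\Omega$. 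This is exactly the inconsistency you sensed.
\item \emph{The paper's intended route.} With the corrected sign, $W_s$ is a two-sided barrier and $\tau=s$ goes exactly like $\tau\in(0,s)$. This is what the paper's one-line appeal to the comparison principle means.
\end{itemize}

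Your own treatment of $\tau=s$ is valid regardless.

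\begin{itemize}
\item \emph{Lower bound.} The argument via $\epsilon W_s$ only uses $(-\Delta)^sW_s\le c_1\rho^{-s}$, so it holds under either sign.
\item \emph{Upper bound.} Your fallback uses $G_{s,\Omega}(x,y)\le C\min\{|x-y|^{2s-N},\rho(x)^s\rho(y)^s|x-y|^{-N}\}$, which is legitimate since $N\ge 2>2s$. It works: the region $|x-y|<\rho(x)/2$ contributes $C\rho(x)^s$, and the rest contributes $C\rho(x)^s\ln\frac{2\,{\rm diam}\,\Omega}{\rho(x)}$.
\end{itemize}

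Comparing the two routes: yours imports the sharp Green kernel bounds for $C^{1,1}$ domains as a black box. In exchange, it is independent of the delicate boundary computation behind Proposition~\ref{pr 2.1.0}, and with the matching lower kernel bound the same integration would prove every case of the lemma without barriers. The paper's barrier route is self-contained given Proposition~\ref{pr 2.1.0} (with $(iv)$ corrected) and the comparison principle.
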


\noindent{\bf Proof.}
For $\tau\in(0,s)$, by Proposition \ref{pr 2.1.0}$(i)$,  there exists $\delta_1>0$ small such that
$$
\frac1 {c_{1}} \rho(x)^{\tau-2s }\leq (-\Delta)^s V_\tau(x)\leq
c_{1}\rho(x)^{\tau-2s}, \quad  \ \ \forall \, x\in A_{\delta_1},$$
for some $c_{1}>0$ depends on $\tau$. Combining with
$(-\Delta)^s\mathbb{G}_{s,\Omega}[\rho^{\tau-2s}]=\rho^{\tau-2s}$ in $\Omega$,
we have that
$$\frac1{ c_{1} }(-\Delta)^s V_\tau(x)\le(-\Delta)^s\mathbb{G}_{s,\Omega}[\rho^{\tau-2s}](x)\le  c_{1} (-\Delta)^s V_\tau(x), \quad  \ \ \forall \, x\in A_{\delta_1}.
$$

When $x\in\partial\Omega$, we have that $V_\tau(x)=0$ and $G_{s,\Omega}(x,y)=0$ for any $y\in\Omega$, then
$$\mathbb{G}_{s,\Omega}[\rho^{\tau-2s}](x)=\int_{\Omega} G_{s,\Omega}(x,y)\rho(y)^{\tau-2s}dy=0.$$
Fixed $0<\delta_2<\delta_1$, for $x\in \Omega$ satisfying $\rho(x)=\delta_2$, we have that $V_\tau(x)=\delta_2^\tau$ and then
  \begin{equation}\label{2.1.3251}
\frac1{c_{44}}V_{\tau}(x)\le \mathbb{G}_{s,\Omega}[\rho^{\tau-2s}](x)\le c_{44}\,V_{\tau}(x).
\end{equation}
By Comparison Principle,   there exists $c_{45}>1$ depends on $\tau$ such that
\begin{equation}\label{2.1.3252}
\frac1{c_{45}}V_{\tau}\le \mathbb{G}_{s,\Omega}[\rho^{\tau-2s}]\le c_{45}\,V_{ \tau}\quad{\rm in}\ \  A_{\delta_2}.
\end{equation}
Since $\mathbb{G}_{s,\Omega}[\rho^{\tau-2s}]$ and $V_{\tau}$ is bounded in $\Omega\setminus A_{\delta_2}$, then (\ref{2.1.3252})
holds in $\Omega$.

\smallskip

For $\tau\in(s,2s)$, by Proposition \ref{pr 2.1.0}$(ii)$, there exists $\delta_1>0$ small such that
$$
\frac1{c_1} \rho(x)^{\tau-2s }\leq -(-\Delta)^s V_\tau(x)\leq
c_1\,\rho(x)^{\tau-2s}, \ \ \ \forall \, x\in A_{\delta_1},$$
for some $c_1>0$ depends on $\tau$.
Denote $W_\tau=\mathbb{G}_{s,\Omega}[1]-V_\tau$, by direct computation,
there exists $\delta_2<\delta_1$ such that
$$\frac{1}{c_{46}}\rho(x)^{\tau-2s}\le (-\Delta)^s W_\tau(x)=-1-(-\Delta)^s V_\tau(x)\le c_{46}\,\rho(x)^{\tau-2s},\quad \forall \ x\in A_{\delta_2}.$$
Since
$(-\Delta)^s\mathbb{G}_{s,\Omega}[\rho^{\tau-2s}]=\rho^{\tau-2s}$ in $\Omega$, then
\begin{equation}\label{2.1.3253}
\frac1{c_{46}}W_{\tau}(x)\le \mathbb{G}_{s,\Omega}[\rho^{\tau-2s}](x)\le c_{46}\,W_{ \tau}(x), \ \ \ \ \forall \ x\in\partial A_{\delta_2}.
\end{equation}
By the Comparison Principle, we have that
$$\frac1{c_{46}}\rho(x)^s\le \frac1{c_{46}}W_\tau(x) \le \mathbb{G}_{s,\Omega}[\rho^{\tau-2s}](x)\le c_{46}\,W_\tau(x) \le c_{46}\,\rho(x)^s, \quad \  x\in A_{\delta_2}$$
%Since $\mathbb{G}_\Omega[\rho^{\tau-2}]$ and $V_{\tau}$ is bounded in $\Omega\setminus A_{\delta_2}$, then
and then
$$\frac1{c_{46}}\rho(x)^s\le \mathbb{G}_{s,\Omega}[\rho^{\tau-2s}](x)\le c_{46}\,\rho(x)^s, \quad \  x\in \Omega. $$

For $\tau=s$, by Proposition  \ref{pr 2.1.0}$(iv)$,  there exists $\delta_1>0$ small such that
 $$
\frac1{c_1}\rho(x)^{-s}\leq-(-\Delta)^s W_s(x)\leq
c_1\,\rho(x)^{-s}, \ \ \ \forall \, x\in A_{\delta_1},$$
then it follows  by Comparison Principle that
$$\frac1{c_{47}} \rho^s\ln \frac1\rho \le \mathbb{G}_{s,\Omega}[\rho^{-s}]\le c_{47} \,\rho^s\ln \frac1\rho \qquad{\rm in }\quad  A_{\delta_2}$$
for some $0<\delta_2<\delta_1$ and then it holds in $\Omega$.
The proof ends.
\qquad $\hfill\Box$

\smallskip

%By Lemma \ref{lm 2.1}, we have the following results.
\begin{corollary}\label{cr 2.1}
 For $\gamma\in(\frac23s,s)$, it holds that
\begin{equation}\label{2.1.3}
 \lim_{x\in\Omega, x\to\partial\Omega}\mathbb{G}_{s,\Omega}[\rho^{-2\gamma}](x)\rho(x)^{-\gamma}=+\infty.
\end{equation}
\end{corollary}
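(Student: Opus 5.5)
We apply Lemma \ref{lm 2.1} with $\tau = 2s-2\gamma$, so that $\rho^{\tau-2s}=\rho^{-2\gamma}$. Since $\gamma\in(\frac23s,s)$, we have $\tau=2s-2\gamma\in(0,\frac23s)\subset(0,s)$, and in particular $\tau\in(0,2s)$, so the lemma applies. We are in the first case of (\ref{varrho 1}) with $\min\{s,\tau\}=\tau$. Hence there is $c_\tau>1$ such that
$$\mathbb{G}_{s,\Omega}[\rho^{-2\gamma}](x)\ge \frac1{c_\tau}\,\rho(x)^{2s-2\gamma},\qquad \forall x\in A_{\frac12}.$$

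Dividing by $\rho(x)^\gamma$ gives
$$\mathbb{G}_{s,\Omega}[\rho^{-2\gamma}](x)\,\rho(x)^{-\gamma}\ge \frac1{c_\tau}\,\rho(x)^{2s-3\gamma}.$$
The assumption $\gamma>\frac23 s$ is exactly $2s-3\gamma<0$. Since $\rho(x)\to0^+$ as $x\to\partial\Omega$ with $x\in\Omega$, the right-hand side tends to $+\infty$, which gives (\ref{2.1.3}).

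The only point needing care is that the exponents fit the hypotheses of Lemma \ref{lm 2.1}. We need $\tau\in(0,s)$, which requires $\gamma\in(\frac s2,s)$, and $(\frac23s,s)$ lies inside that range. We also need the lower bound of the lemma to hold near the boundary, which is in $A_{\frac12}$, where $\varrho_\tau=\rho^\tau$ exactly. No further obstacle is expected; the threshold $\frac23 s$ is precisely where $2s-2\gamma$ crosses $\gamma$.
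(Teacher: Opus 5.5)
Your proposal is correct and matches the paper's proof: both apply Lemma \ref{lm 2.1} with $\tau=2s-2\gamma\in(0,s)$ to obtain the lower bound $\mathbb{G}_{s,\Omega}[\rho^{-2\gamma}]\ge c\,\rho^{2s-2\gamma}$. Both then conclude from $2s-2\gamma<\gamma$, i.e.\ $\rho^{2s-3\gamma}\to+\infty$ at the boundary.
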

\noindent{\bf Proof.} By Lemma \ref{lm 2.1}, we have that
$$\mathbb{G}_{s,\Omega}[\rho^{-2\gamma}](x)\ge c_{48}\, \rho(x)^{2s-2\gamma},\quad \forall\  x\in\Omega,$$
 combining with the fact that
$2s-2\gamma<\gamma<s$, since $\gamma\in(\frac23s,s)$,  then (\ref{2.1.3}) holds.
\qquad $\hfill\Box$

\section{Minimal solutions}

This section is devoted to the existence of pull-in voltage $\lambda^*$ to problem (\ref{eq 1.1}) such that
 (\ref{eq 1.1}) admits a solution for $\lambda\in(0,\lambda^*)$ and no solution for $\lambda>\lambda^*$.

\begin{proposition}\label{pr 2.1}
 Assume that $a\in C^\gamma(\Omega)\cap C(\bar\Omega)$ satisfies (\ref{1.1}) with $\gamma\in(0,\frac23s]$, then there exists  $\lambda^* >0$ such that
 problem  (\ref{eq 1.1}) admits at least one solution for  $\lambda\in(0,\lambda^*)$ and no solution
for $\lambda>\lambda^*$.
Moreover,
\begin{equation}\label{2.4}
\lambda^*\le \frac{\int_\Omega a(x) dx}{ \int_\Omega\frac{\mathbb{G}_{s,\Omega}[1](x)}{a(x)^2}dx}.
\end{equation}

\end{proposition}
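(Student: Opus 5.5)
Three things have to be shown: existence for small $\lambda$ via a supersolution, the threshold structure through monotonicity of the solution set, and the upper bound \eqref{2.4} by testing the equation against $\mathbb{G}_{s,\Omega}[1]$.

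\textbf{Existence for small $\lambda$.} I will build an explicit supersolution. For $\gamma\in(0,\frac23 s]$, set
$$\bar u=t\,\mathbb{G}_{s,\Omega}[\rho^{-2\gamma}],$$
with $t>0$ to be fixed. Apply Lemma \ref{lm 2.1} with $\tau=2s-2\gamma\in[\frac23 s,2s)$:
\begin{itemize}
\item if $\tau\ne s$, then $\bar u\le t\,c_\tau\,\rho^{\min\{s,2s-2\gamma\}}$;
\item if $\tau=s$ (i.e.\ $\gamma=\frac s2$), then $\bar u\le t\,c_\tau\,\rho^s\ln\frac1\rho$.
\end{itemize}
The exponents compare with $\gamma$ as follows. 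Since $\gamma\le \frac23 s$, we have $2s-2\gamma\ge\gamma$. Since $\gamma<s$, we have $s>\gamma$, and $\rho^s\ln\frac1\rho=o(\rho^\gamma)$. In every case this gives $\bar u\le t\,C\rho^\gamma\le \frac{tC}{\kappa}\,a$. Taking $t$ small makes $\bar u\le \frac12 a$, so $a-\bar u\ge \frac12 a\ge \frac\kappa2\rho^\gamma$. Then
$$\frac{\lambda}{(a-\bar u)^2}\le \frac{4\lambda}{\kappa^2}\rho^{-2\gamma}\le t\rho^{-2\gamma}=(-\Delta)^s\bar u \quad\text{whenever } \lambda\le \frac{t\kappa^2}{4}.$$
The borderline $\gamma=\frac23 s$ works because the inequality $2s-2\gamma\ge\gamma$ is not strict, so only the constant must be small. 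This is where the restriction on $\gamma$ enters.

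Next I run the monotone iteration
$$u_0=0,\qquad u_{n+1}=\mathbb{G}_{s,\Omega}\Big[\frac{\lambda}{(a-u_n)^2}\Big].$$
By induction and the comparison principle, $0\le u_n\le u_{n+1}\le\bar u$. The right-hand sides are dominated by $C\rho^{-2\gamma}$, which lies in $L^1(\Omega,\rho^s dx)$ since $2\gamma<2s<1+s$. Monotone convergence then gives a limit $u$ with $u=\mathbb{G}_{s,\Omega}[\lambda(a-u)^{-2}]$ and $0<u\le\bar u<a$. Interior regularity follows from $a\in C^\gamma$ and standard Hölder estimates for $(-\Delta)^s$, so $u$ is a solution. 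Because the iteration stays below any supersolution, $u$ is also the minimal solution.

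\textbf{Threshold and the bound \eqref{2.4}.} Define $\lambda^*=\sup\{\lambda>0:\ \eqref{eq 1.1}\text{ has a solution}\}$; the previous step gives $\lambda^*>0$. If a solution $u_{\lambda'}$ exists, it is a supersolution for every $\lambda<\lambda'$, so the same iteration produces a solution for all such $\lambda$. Hence solutions exist on $(0,\lambda^*)$ and none exist for $\lambda>\lambda^*$.

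Finiteness of $\lambda^*$ comes from testing the equation against $\mathbb{G}_{s,\Omega}[1]$. By symmetry of the Green kernel,
$$\int_\Omega u\,dx=\int_\Omega \mathbb{G}_{s,\Omega}[1]\,\frac{\lambda}{(a-u)^2}\,dx.$$
Using $u\le a$ on the left and $(a-u)^2\le a^2$ on the right yields
$$\int_\Omega a\,dx\ \ge\ \lambda\int_\Omega\frac{\mathbb{G}_{s,\Omega}[1]}{a^2}\,dx.$$
This is exactly \eqref{2.4}. The denominator is positive and finite, since $\mathbb{G}_{s,\Omega}[1]\sim\rho^s$ and $a^{-2}\le\kappa^{-2}\rho^{-2\gamma}$ with $s-2\gamma>-1$.

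\textbf{Main obstacle.} The delicate step is justifying the Green representation for a solution, i.e.\ that any classical solution satisfies $u=\mathbb{G}_{s,\Omega}[\lambda(a-u)^{-2}]$, so that the testing argument is legitimate. The worry is that the right-hand side could fail to be integrable against $\rho^s$ near $\partial\Omega$. I will handle this by truncating: test against the solutions $\mathbb{G}_{s,\Omega}[\chi_{\Omega_\epsilon}]$ on compactly contained subdomains $\Omega_\epsilon$, or equivalently use the nonnegativity of the integrand and Fatou's lemma. This shows the integral identity holds, or at least holds as the inequality needed for \eqref{2.4}.
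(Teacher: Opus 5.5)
Your proposal is correct and follows essentially the paper's proof. You run a monotone iteration from $0$, kept uniformly below $a$ for small $\lambda$ via $\mathbb{G}_{s,\Omega}[\rho^{-2\gamma}]\le c\,\varrho_{2s-2\gamma}\le C\rho^\gamma$; your supersolution $t\,\mathbb{G}_{s,\Omega}[\rho^{-2\gamma}]$ is a repackaging of the paper's invariant bound $v_n\le\mu\rho^\gamma$. You then define $\lambda^*$ as a supremum, with existence below it obtained from supersolutions. Finally you get (\ref{2.4}) by testing against $\mathbb{G}_{s,\Omega}[1]$, where the paper justifies the step with the cutoffs $\xi_n=\mathbb{G}_{s,\Omega}[1]\eta_n$ and you use truncation and monotone convergence.

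For that last step, the clean form of your truncation is the comparison $u\ge \mathbb{G}_{s,\Omega}[\lambda(a-u)^{-2}\chi_{\Omega_\epsilon}]$, followed by Tonelli and $\epsilon\to0$. This gives exactly the one-sided inequality you need. Testing with $\mathbb{G}_{s,\Omega}[\chi_{\Omega_\epsilon}]$ alone would not avoid the boundary issue, since that function is not compactly supported in $\Omega$.
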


\noindent{\bf Proof.}   Let $v_0\equiv0$ in $\bar\Omega$ and
$$v_1=\lambda \mathbb{G}_{s,\Omega}[a^{-2}]>0,$$
by (\ref{1.1}) and Lemma \ref{lm 2.1}, we have that
\begin{equation}\label{2.428}
 v_1  = \lambda \mathbb{G}_{s,\Omega}[a^{-2}] \le  \frac{\lambda}{\kappa^2} \mathbb{G}_{s,\Omega}[\rho^{-2\gamma}]  \le
 \frac{ \lambda}{\kappa^2}c_{49} \, \varrho_{ 2s-2\gamma },
\end{equation}
where $c_{49}>0$ depending on $\gamma$ and $\varrho_{2s-2\gamma}$ is given by (\ref{varrho 1}).

For $\gamma\not=\frac12s$ and $0<\gamma\le \frac23s$, we observe that $\min\{s,2s-2\gamma\}\ge \gamma$, then by (\ref{2.428}), it yields that
$$
v_1(x)\le  \frac{ \lambda}{\kappa^2}c_{49} \, \rho(x)^{\min\{s,2s-2\gamma\}}\le \frac{\lambda}{\kappa^2}c_{49}\, \rho(x)^\gamma,\qquad \forall \ x\in\Omega.
$$
While for $\gamma=\frac12s$, we have that $2s-2\gamma=s$ and by (\ref{2.428}), the following is true
$$
v_1(x)\le  \frac{ \lambda}{\kappa^2}c_{49}\,  \rho^s(x)\ln\frac1{\rho(x)}\le \frac{\lambda}{\kappa^2}c_{49}\, \rho(x)^\gamma,\qquad \forall \ x\in\Omega.
$$
Then for $\gamma\in (0,\frac23s]$, it holds that
$$v_1(x)\le \frac{\lambda}{\kappa^2}c_{49} \, \rho(x)^\gamma,\qquad \forall \ x\in\Omega.$$
Fixed $\mu\in(0, \kappa)$, then there exists $\lambda_1>0$ such that
%\begin{equation}\label{2.1}
$$\frac{\lambda}{\kappa^2}c_{49}\le \mu<\kappa, \ \ \ \forall \ \lambda<\lambda_1,$$
thus  for any $\lambda<\lambda_1$,
$$v_1(x)\le \mu\rho(x)^\gamma,\qquad \forall \  x\in\Omega.$$
%\end{equation}

Let $v_2=\lambda \mathbb{G}_{s,\Omega}[(a-v_1)^{-2}]$, by the fact that
$a(x)\ge \kappa\rho(x)^\gamma>\mu\rho(x)^\gamma\ge v_1(x)>0$ for any $x\in\Omega$ and Lemma \ref{lm 2.1},
 we have that
$$v_1=\lambda\mathbb{G}_{s,\Omega}[\frac1{a^2}]\le v_2\le  \frac{\lambda}{(\kappa-\mu)^2}\mathbb{G}_{s,\Omega}[\rho^{-2\gamma}]
\le   \frac{\lambda}{(\kappa-\mu)^2}c_{50} \, \varrho_{ 2s-2\gamma }\le  \frac{\lambda}{(\kappa-\mu)^2}c_{50}\, \rho^\gamma\quad {\rm in}\ \ \Omega$$
for $\gamma\in(0,\frac23s]$. Note that there exists $\lambda_2\in(0,\lambda_1]$ such that
$$\frac{\lambda}{(\kappa-\mu)^2}c_{50}\le \mu<\kappa, \ \ \ \forall \ \lambda<\lambda_2,$$
then
$$v_2(x)\le \mu\rho(x)^\gamma,\qquad \forall \ x\in\Omega.$$
Iterating the above process, it holds that
 $$v_n:=\lambda\mathbb{G}_{s,\Omega}[\frac{1}{(a-v_{n-1})^2}]\ge v_{n-1},\quad n\in\N$$
and
$$
v_n(x)\le \mu\rho(x)^\gamma,\qquad\forall \  x\in\Omega.
$$
Then the sequence $\{v_n\}_n$ converges to a limit, denoting by $u_\lambda=\lim\limits_{n\to\infty} v_n$,  and
then $u_\lambda$ is  a classical solution of (\ref{eq 1.1}).

\smallskip

We now claim  that $u_\lambda$ is the minimal solution of  (\ref{eq 1.1}), that is,
for any positive solution $u$ of (\ref{eq 1.1}), it is true that
$u_\lambda\le u$. Indeed, since $u\ge v_0\equiv0$  and
$$u=\lambda \mathbb{G}_{s,\Omega}[\frac1{(a-u)^2}]\ge\lambda\mathbb{G}_{s,\Omega}[\frac1{a^2}]=v_1,$$
by inductively, it holds that $u\ge v_n$ for all $n\in\N$, then $u\ge u_\lambda$.

\smallskip

Next wee show that the mapping $\lambda\mapsto u_\lambda$ is increasing.
If problem  (\ref{eq 1.1}) has a  super solution $u$  for $\lambda_0>0$,
then (\ref{eq 1.1}) admits a minimal solution $u_\lambda$   for all $\lambda\in(0,\lambda_0]$, let us define
   $$\lambda^*=\sup\{\lambda>0:\,  (\ref{eq 1.1}) \ {\rm has \  a \  minimal \  solution \ for} \ \lambda \},$$
which is the largest $\lambda$ such that problem (\ref{eq 1.1}) has minimal positive solution. %Note that $\lambda^*>0$.
In fact, for $0<\lambda_1<\lambda_2<\lambda^*$, we have that $0\le u_{\lambda_1}\le u_{\lambda_2}\le a$ in $\Omega$, then
\begin{align*}
 (-\Delta)^s (u_{\lambda_2}-u_{\lambda_1}) = \frac{\lambda_2}{(a-u_{\lambda_2} )^2} - \frac{\lambda_1}{(a-u_{\lambda_1} )^2}
    \ge  \frac{\lambda_2-\lambda_1}{(a-u_{\lambda_1} )^2}\ge \frac{\lambda_2-\lambda_1}{a^2}>0,
\end{align*}
which implies that
 \begin{equation}\label{e 1.1}
 u_{\lambda_2}-u_{\lambda_1}\ge (\lambda_2-\lambda_1)\mathbb{G}_{s,\Omega}[a^{-2}]>0.
 \end{equation}
So the mapping $\lambda\mapsto u_\lambda$ is increasing.
%, so $u_{\lambda}<a$ in $\Omega$ for any $\lambda<\lambda^*$ and
%by the interior regularity, we have that $u_\lambda\in C_{loc}^{2,\gamma'}(\Omega)$ for any $\gamma'<\gamma$.

Finally, we prove that $\lambda^*<+\infty$.  By contradiction, if not, then for any $\lambda>0$, (\ref{eq 1.1}) has the minimal solution $u_\lambda$.
Let $A_{\delta}=\{x\in\Omega:\, \rho(x)<\delta\}$, for $n\in\N$,
\begin{equation}\label{101}
\eta_n=1 \ \ {\rm in} \  \Omega\setminus{A_{1/n}}, \quad  \   \eta_n=0 \ \ {\rm in} \  A_{1/{2n}},\quad  \  \eta_n\in C^2(\Omega),
\end{equation}
and
$\xi_n= \mathbb{G}_{s,\Omega}[1]\eta_n$ in $\Omega$.
Observe that $\xi_n\in C_c^2(\Omega)$ and
\begin{align*}
(-\Delta)^s \xi_n&=[(-\Delta)^s \mathbb{G}_{s,\Omega}[1]]\cdot\eta_n+
\mathbb{G}_{s,\Omega}[1]\cdot[(-\Delta)^s\eta_n]\\
 &\qquad\qquad\qquad\qquad\quad\ \ +\int_ {\R^N} \frac{[\mathbb{G}_{s,\Omega}[1] (x)-\mathbb{G}_{s,\Omega}[1] (y)]\cdot[\eta_n(x)-\eta_n(y)]} {|x-y|^{N+2s}} dx,
\end{align*}
where $\int_ {\R^N} \frac{[\mathbb{G}_{s,\Omega}[1] (x)-\mathbb{G}_{s,\Omega}[1] (y)]\cdot[\eta_n(x)-\eta_n(y)]} {|x-y|^{N+2s}} ]dx$ is bounded,
% because of H\"{o}lder continuity,by the fact that
$(-\Delta)^s \mathbb{G}_{s,\Omega}[1]=1$, $|(-\Delta)^s\eta_n|\le c_{51} \, n^{2s}$ and
$\mathbb{G}_{s,\Omega}[1]\le c_{52} \, \rho^s$ in $\Omega$, then
\begin{equation}\label{eq9281}
\int_\Omega u_\lambda [(-\Delta)^s\xi_n] dx\le\int_\Omega (1+c_{53})u_\lambda dx+ c_{54} \, n^{2s}\int_{A_{1/n}}u_\lambda \rho^s dx
\le c_{55},
\end{equation}
where the constants are independent on $n$.

Since $u_\lambda$ is the minimal solution of  (\ref{eq 1.1}), we have that
$$\int_\Omega u_\lambda [(-\Delta)^s\xi_n] dx
=\int_\Omega [(-\Delta)^s u_\lambda]\xi_n dx=\int_\Omega \frac{\lambda \xi_n}{(a-u_\lambda)^2} dx.$$
Passing to the limit of $n\to\infty$ and combining with (\ref{eq9281}), we obtain that
$\int_\Omega \frac{\lambda \mathbb{G}_{s,\Omega}[1]}{(a-u_\lambda)^2} dx\le c_{55}$ and then
\begin{align*}
\int_\Omega a(x) dx\ge \int_\Omega u_\lambda(x) dx
&= \int_\Omega [(-\Delta)^s\mathbb{G}_{s,\Omega}[1](x) ] \cdot u_\lambda (x) dx
\\&= \int_\Omega \mathbb{G}_{s,\Omega}[1](x)\cdot [(-\Delta)^s u_\lambda (x)]dx\\
 &=  \lambda \int_\Omega \frac{\mathbb{G}_{s,\Omega}[1](x)}{[a(x)-u_\lambda(x)]^2}dx
 \ge   \lambda \int_\Omega\frac{\mathbb{G}_{s,\Omega}[1](x)}{a^2(x)}dx,
\end{align*}
which implies that
$$
\lambda\le  \frac{\int_\Omega a(x) dx }{\int_\Omega\frac{\mathbb{G}_{s,\Omega}[1](x)}{a^2(x)}dx}.
$$
Then (\ref{2.4}) holds.

\smallskip

Similar to the proof as the claim, we have that  problem  (\ref{eq 1.1}) has the minimal solution for  $\lambda\in(0,\lambda^*)$ and
no solution for $\lambda>\lambda^*$. The proof ends.$\hfill\Box$

\medskip

\noindent{\bf Proof of Theorem \ref{teo 2}.}
By contradiction, assume that there exists  $\lambda>0$ such that
problem (\ref{eq 1.1}) has a solution $u_\lambda$ satisfying $0<u_\lambda<a$ in $\Omega$, then
\begin{equation}\label{2.5}
 \lambda \mathbb{G}_{s,\Omega}[a^{-2}]\le \lambda \mathbb{G}_{s,\Omega}[(a-u_\lambda)^{-2}] = u_\lambda< a\quad{\rm in}\quad \Omega.
\end{equation}
Since $a\in  C(\bar\Omega)$ satisfies that  $0<a \leq c \, \rho ^\gamma$ with $\gamma\in(\frac23s,s)$ and $c>0$, we have that
$$  \lambda \mathbb{G}_{s,\Omega}[a^{-2}]\ge \frac{\lambda}{c^2}\mathbb{G}_{s,\Omega}[\rho^{-2\gamma}].$$
For $\gamma\in(\frac23s,s)$, by Corollary \ref{cr 2.1}, we have that
$$\lim_{x\in\Omega,  x\to\partial\Omega} \mathbb{G}_{s,\Omega}[\rho^{-2\gamma}](x)\rho(x)^{-\gamma}=+\infty,$$
combining with (\ref{2.5}), it yields that
$$\lim_{x\in\Omega,  x\to\partial\Omega} a(x)\rho(x)^{-\gamma}=+\infty,$$
which contradicts (\ref{a 1.0}). Therefore, problem (\ref{eq 1.1}) has no solution under the assumptions of Theorem \ref{teo 2}.
The  proof ends.\qquad$\hfill\Box$

\smallskip

In order to do the boundary decay estimate for $u_\lambda$,  we introduce the following lemma.

\begin{lemma} \label{lm 2.2}

Assume that the function $a\in C^\gamma(\Omega)\cap C(\bar\Omega)$  satisfies (\ref{1.1}) with  $\gamma\in(0,\frac23s]$ and
 $u$ is a super solution of (\ref{eq 1.1})
with $\lambda>0$ such that
\begin{equation}\label{2.3}
u \le \theta a\quad {\rm in}\quad \Omega
\end{equation}
for  some $\theta\in(0,1)$. Then (\ref{eq 1.1}) admits the minimal solution $u_\lambda$  such that
\begin{equation}\label{2.311}
u_\lambda\le c \, \varrho_{ 2s-2\gamma }\quad{\rm in}\quad \Omega
\end{equation}
 for some $c>0$, where $\varrho_{ 2s-2\gamma }$ is given by (\ref{varrho 1}).

\end{lemma}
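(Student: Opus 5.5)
The plan is monotone iteration started from $v_0\equiv 0$, with $u$ serving as an upper barrier, followed by a single application of Lemma \ref{lm 2.1}.

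\textbf{Step 1: iteration and minimality.} As in the proof of Proposition \ref{pr 2.1}, I set $v_0\equiv0$ and
$$v_n=\lambda\mathbb{G}_{s,\Omega}\big[(a-v_{n-1})^{-2}\big],\qquad n\ge1.$$
I will show by induction that $0\le v_{n-1}\le v_n\le u$ in $\Omega$. The supersolution property of $u$ gives
$$u\ge \lambda\mathbb{G}_{s,\Omega}\big[(a-u)^{-2}\big],$$
which is justified by the comparison principle, since $u=0$ outside $\Omega$. If $v_{n-1}\le u$, then $(a-v_{n-1})^{-2}\le (a-u)^{-2}$, because $u\le\theta a<a$. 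Applying the positivity of $\mathbb{G}_{s,\Omega}$ then yields $v_n\le u$. Monotonicity in $n$ follows from the same positivity argument applied to $v_{n-1}\ge v_{n-2}$.

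\textbf{Step 2: passage to the limit.} Hypothesis (\ref{2.3}) gives the pointwise bound
$$a-v_n\ge a-u\ge(1-\theta)a\ge (1-\theta)\kappa\rho^\gamma,$$
so the right-hand sides satisfy
$$\frac{\lambda}{(a-v_{n-1})^2}\le \frac{\lambda}{(1-\theta)^2\kappa^2}\,\rho^{-2\gamma}.$$
This is a uniform $L^1(\Omega,\rho^s dx)$ dominant, since $2\gamma<2s<1+s$. By monotone (or dominated) convergence, $v_n\uparrow u_\lambda$ with $u_\lambda=\lambda\mathbb{G}_{s,\Omega}[(a-u_\lambda)^{-2}]$. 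Moreover $0<u_\lambda\le\theta a<a$.

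Interior regularity then makes $u_\lambda$ a classical solution. For this I use $a\in C^\gamma$ and the interior H\"older bound on $(a-u_\lambda)^{-2}$, obtained by bootstrapping from the Green representation. Minimality is the same induction as in Proposition \ref{pr 2.1}: any solution $w$ satisfies $w\ge v_n$ for all $n$.

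\textbf{Step 3: the decay bound (\ref{2.311}).} Using the lower bound on $a-u_\lambda$ once more,
$$u_\lambda=\lambda\mathbb{G}_{s,\Omega}\big[(a-u_\lambda)^{-2}\big]\le \frac{\lambda}{(1-\theta)^2\kappa^2}\,\mathbb{G}_{s,\Omega}[\rho^{-2\gamma}].$$
Writing $\rho^{-2\gamma}=\rho^{\tau-2s}$ with $\tau=2s-2\gamma$, the constraint $\gamma\in(0,\frac23 s]$ gives $\tau\in[\frac23 s,2s)\subset(0,2s)$. Lemma \ref{lm 2.1} therefore applies and gives
$$\mathbb{G}_{s,\Omega}[\rho^{-2\gamma}]\le c_\tau\,\varrho_{2s-2\gamma},$$
which proves (\ref{2.311}) with $c=\lambda c_\tau(1-\theta)^{-2}\kappa^{-2}$.

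\textbf{Main obstacle.} I expect the only delicate point to be Step 2. It must be checked that the limit is a genuine solution, meaning both the regularity and that the constraint $u_\lambda<a$ is strict. The strict margin $\theta<1$ is exactly what supplies the uniform domination that makes this work. The remaining steps are routine given Lemma \ref{lm 2.1}.
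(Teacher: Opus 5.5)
Your proposal is correct and follows the paper's own argument. The paper also obtains the minimal solution by the monotone iteration of Proposition \ref{pr 2.1}, with the supersolution $u$ as upper barrier, giving $u_\lambda\le u\le\theta a$, and then concludes from $u_\lambda=\lambda\mathbb{G}_{s,\Omega}[(a-u_\lambda)^{-2}]\le \frac{\lambda}{\kappa^2(1-\theta)^2}\mathbb{G}_{s,\Omega}[\rho^{-2\gamma}]$ and Lemma \ref{lm 2.1} with $\tau=2s-2\gamma$; you simply write out the induction and limit passage that the paper abbreviates as ``similar to the proof of Proposition \ref{pr 2.1}''.
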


 \noindent{\bf Proof.}
 Since $u$ is a super solution of (\ref{eq 1.1}) and satisfies (\ref{2.3}),
 similar to the proof of Proposition \ref{pr 2.1},  we have that
  (\ref{eq 1.1}) has the minimal solution $u_\lambda$ and then
$$   u_\lambda\le u\le \theta a,\qquad$$
 by the fact of  (\ref{1.1}), we deduce that
 $$ u_\lambda =\lambda \mathbb{G}_{s,\Omega}[(a-u_\lambda)^{-2}]
   \le \frac{\lambda}{\kappa^{2}(1-\theta)^{2}}\mathbb{G}_{s,\Omega}[\rho^{-2\gamma}].
$$
Using  Lemma \ref{lm 2.1} with $\tau=2s-2\gamma$, we obtain that (\ref{2.311}) holds.
The proof is complete. \qquad $\hfill\Box$

\begin{proposition}\label{pr 3.1.1}
Assume that  $a\in C^\gamma(\Omega)\cap C(\bar\Omega)$ satisfies (\ref{1.1}) and(\ref{a 1.0}) with
$\gamma\in(0,\frac23s]$.
Then  for $\lambda\in(0,\lambda^*)$, there exists $\bar c_1 \ge1$ depending on $\gamma$ such that
$$\frac{\lambda}{\bar c_1}\varrho_{ 2s-2\gamma }(x)\le u_\lambda(x)\le \bar c_1 \lambda\rho(x)^{\gamma},\quad \forall x\in \Omega.$$
Furthermore,  there exists $\lambda_*\le \lambda^*$ such that for $\lambda\in(0,\lambda_*)$,
\begin{equation}\label{2.1.2}
 \frac{\lambda}{\bar c_1 }\varrho_{ 2s-2\gamma }(x)\le u_\lambda(x)\le \bar c_1 \lambda\varrho_{ 2s-2\gamma }(x),\quad \forall x\in \Omega
\end{equation}
 where $\varrho_{ 2s-2\gamma }$ is given by (\ref{lm 2.1}).
\end{proposition}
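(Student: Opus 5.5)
The plan is to compare $u_\lambda$ from below and above with multiples of $\mathbb{G}_{s,\Omega}[\rho^{-2\gamma}]$, using Lemma \ref{lm 2.1} with $\tau=2s-2\gamma\in[\frac23 s,2s)$, for which $\mathbb{G}_{s,\Omega}[\rho^{-2\gamma}]\asymp\varrho_{2s-2\gamma}$.

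\emph{Lower bound.} Since $u_\lambda=\lambda\mathbb{G}_{s,\Omega}[(a-u_\lambda)^{-2}]$ and $0<a-u_\lambda\le a\le c\rho^\gamma$ by (\ref{a 1.0}), we get
$$u_\lambda\ge \lambda\mathbb{G}_{s,\Omega}[a^{-2}]\ge \frac{\lambda}{c^2}\mathbb{G}_{s,\Omega}[\rho^{-2\gamma}]\ge \frac{\lambda}{c^2c_\tau}\varrho_{2s-2\gamma}.$$
This holds for every $\lambda\in(0,\lambda^*)$.

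\emph{Upper bound $u_\lambda\le\bar c_1\lambda\rho^\gamma$.} This is immediate for any $\bar c_1$ with $\bar c_1\lambda\ge c$, since $u_\lambda\le a\le c\rho^\gamma$. The issue is to make the constant uniform in $\lambda$ near $0$. For this I fix $\lambda_0\in(\lambda,\lambda^*)$. By the strict monotonicity (\ref{e 1.1}), $u_{\lambda_0}-u_\lambda\ge(\lambda_0-\lambda)\mathbb{G}_{s,\Omega}[a^{-2}]\ge \frac{\lambda_0-\lambda}{c^2}\mathbb{G}_{s,\Omega}[\rho^{-2\gamma}]$. Near the boundary this lower bound is comparable to $\rho^\gamma$ only when $\gamma=\frac23 s$, since then $\min\{s,2s-2\gamma\}=\gamma$, so the margin $u_\lambda\le\theta a$ cannot be read off directly in general. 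Instead, for $\lambda\le\lambda_0/2$ I would use concavity in $\lambda$: the function $\frac{\lambda}{\lambda_0}u_{\lambda_0}$ is a supersolution. Indeed, writing $t=\lambda/\lambda_0\le 1$, we have $a-tu_{\lambda_0}\ge a-u_{\lambda_0}$, hence
$$(-\Delta)^s(tu_{\lambda_0})=\frac{\lambda}{(a-u_{\lambda_0})^2}\ge \frac{\lambda}{(a-tu_{\lambda_0})^2}.$$
Therefore $u_\lambda\le \frac{\lambda}{\lambda_0}u_{\lambda_0}\le \frac{\lambda}{\lambda_0}c\rho^\gamma$, which gives the bound with $\bar c_1=\max\{c/\lambda_0,\dots\}$. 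For $\lambda$ in $[\lambda_0/2,\lambda^*)$ the bound is trivial by taking $\bar c_1\ge 2c/\lambda_0$, noting $\lambda^*<\infty$.

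\emph{Sharp upper bound for small $\lambda$.} The same supersolution $\frac{\lambda}{\lambda_0}u_{\lambda_0}$ satisfies $\le\theta a$ with $\theta=\frac{\lambda c}{\lambda_0\kappa}$, after using $u_{\lambda_0}\le c\rho^\gamma$ and $a\ge\kappa\rho^\gamma$. Set $\lambda_*=\min\{\lambda^*,\lambda_0\kappa/(2c)\}$, so that $\theta\le\frac12$ for $\lambda<\lambda_*$. Lemma \ref{lm 2.2} then gives
$$u_\lambda\le \frac{\lambda}{\kappa^2(1-\theta)^2}\mathbb{G}_{s,\Omega}[\rho^{-2\gamma}]\le \frac{4c_\tau}{\kappa^2}\lambda\,\varrho_{2s-2\gamma}.$$
This is linear in $\lambda$, because the proof of Lemma \ref{lm 2.2} shows the constant is exactly $\lambda/(\kappa^2(1-\theta)^2)$ times $c_\tau$. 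Combined with the lower bound, this yields (\ref{2.1.2}).

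The main obstacle is the uniformity of the constants in $\lambda$. I expect the supersolution trick $t\mapsto tu_{\lambda_0}$ to handle this cleanly. The only care needed is at $\gamma=\frac12 s$, where $\varrho$ carries a logarithm; this is covered by Lemma \ref{lm 2.1} with $\tau=s$.
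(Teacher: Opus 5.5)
Your proof is correct. The lower bound is the paper's, but your upper bound takes a different route.

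\textbf{How the paper does it.} The paper gets the margin $u_\lambda\le\theta a$ only for small $\lambda$. It uses the iteration bound $v_n\le\mu\rho^\gamma$ with $\mu<\kappa$ from the proof of Proposition~\ref{pr 2.1}, applies Lemma~\ref{lm 2.2}, and then \emph{defines} $\lambda_*=\sup\{\lambda\in(0,\lambda^*):\ \limsup_{x\to\partial\Omega}u_\lambda\rho^{-\gamma}<\kappa\}$ so that such a margin persists below it. Its constants depend on $\lambda$ through $\theta$. The bound $u_\lambda\le\bar c_1\lambda\rho^\gamma$ on all of $(0,\lambda^*)$ is left implicit.

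\textbf{What your route buys and costs.} Your scaling supersolution $\frac{\lambda}{\lambda_0}u_{\lambda_0}$ does everything with one comparison. It makes the constants genuinely uniform in $\lambda$, so the factor $\lambda$ in the statement becomes meaningful. The price is that your explicit threshold $\lambda_0\kappa/(2c)$ is not the $\lambda_*$ the paper relies on later (Lemma~\ref{lm 5.1}, Proposition~\ref{pr 3.1}, monotonicity in $\gamma$ and $\kappa$).

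\textbf{You can push the threshold to $\lambda^*$.} You give away more than necessary. Since $u_{\lambda_0}\le a$, you get directly $u_\lambda\le\frac{\lambda}{\lambda_0}a$, i.e.\ $\theta=\lambda/\lambda_0$ with no factor $c/\kappa$. Choosing $\lambda_0\in(\lambda,\lambda^*)$ then yields (\ref{2.1.2}) for \emph{every} $\lambda\in(0,\lambda^*)$, with constant $\frac{c_\tau}{\kappa^2}(1-\lambda/\lambda_0)^{-2}$. This is the margin the paper obtains only for $\gamma=\frac23 s$ in Lemma~\ref{lm 5.1}, via the route through (\ref{e 1.1}) that you correctly discarded for $\gamma<\frac23 s$.

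\textbf{Minor slips.}
\begin{itemize}
\item $\lambda_0$ must be fixed once in $(0,\lambda^*)$, independently of $\lambda$. Writing $\lambda_0\in(\lambda,\lambda^*)$ conflicts with your later case split.
\item The split at $\lambda_0/2$ is unnecessary. The bound $u_\lambda\le\frac{c}{\lambda_0}\lambda\rho^\gamma$ holds for all $\lambda\le\lambda_0$ by scaling, and trivially for $\lambda\ge\lambda_0$. The remark that $\lambda^*<\infty$ is also not needed.
\item The property you use is monotonicity of $\lambda\mapsto u_\lambda/\lambda$. That is a convexity-type property, not concavity.
\end{itemize}
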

\noindent{\bf Proof.} {\bf Lower bound.}
By Proposition \ref{pr 2.1}, problem  (\ref{eq 1.1}) admits the minimal solution $u_\lambda$ for $\lambda\in(0,\lambda^*)$,
which is approximated
by an increasing sequence $\{v_n\}_n$,
$$ v_0=0 \quad{\rm and}\quad  v_n=\lambda\mathbb{G}_{s,\Omega}[(a-v_{n-1})^{-2}].$$
 By (\ref{a 1.0}) and Lemma \ref{lm 2.1} with $\tau=2s-2\gamma$, we have that
 \begin{align*}
 u_\lambda\ge v_1=\lambda\mathbb{G}_{s,\Omega}[a^{-2}]
 \ge  \lambda c^2 \mathbb{G}_{s,\Omega}[\rho^{-2\gamma}]
    \ge   c_{\gamma} \lambda\varrho_{ 2s-2\gamma }\quad{\rm in}\quad \Omega
 \end{align*}
for some $c_{\gamma}\in(0,1)$ depending on $\gamma$.

\smallskip

{\bf Upper bound.}  By the proof of Proposition \ref{pr 2.1}, for $\lambda>0$ small and some $\mu\in(0,\kappa)$,
we have that $u_\lambda(x)\le \mu\rho(x)^\gamma$ for $x\in\Omega$,
 then there exists $\theta\in(0,1)$ such that
$$u_{\lambda}(x)\le \theta a(x),\quad x\in\Omega.$$
By Lemma \ref{lm 2.2}, we have that
\begin{equation}\label{eq921}
 u_\lambda\le c \varrho_{ 2s-2\gamma }\quad{\rm in}\quad \Omega.
 \end{equation}
Let
$$\lambda_*=\sup\{\lambda\in(0,\lambda^*):\ \limsup_{x\in\Omega,x\to\partial\Omega}u_\lambda(x)\rho(x)^{-\gamma}<\kappa \},$$
we observe that $\lambda_*\le \lambda^*$ and  (\ref{eq921}) holds for all $\lambda\in(0,\lambda_*)$.
The proof ends.
\qquad $\hfill\Box$

\setcounter{equation}{0}
\section{Estimates for $\lambda^*$ and $\lambda_*$ when $\Omega=B_1(0)$ }

In this section, we do the estimates for $\lambda^*$ and $\lambda_*$ in the case that $\Omega=B_1(0)$
and
\begin{equation}\label{a 1.1}
a(x)=\kappa(1-|x|^2)^{\gamma}.
\end{equation}
Observe that the function $a$ represents the upper semi-sphere type shape in $\R^N$. We have following monotonicity results.

\begin{proposition}\label{lm 3.1}
Let $\Omega=B_1(0)$, the function
$a$ satisfy (\ref{a 1.1})
with $\kappa>0$, $\gamma\in(0,\frac23s]$ and $0<\lambda<\lambda^*(\kappa,\gamma)$. Then

$(i)$ the mappings: $\gamma\mapsto \lambda^*(\kappa,\gamma)$ and $\gamma\mapsto \lambda_*(\kappa,\gamma)$  are decreasing;

$(ii)$ the mapping: $\kappa\mapsto \lambda^*(\kappa,\gamma)$  and $\kappa\mapsto \lambda_*(\kappa,\gamma)$ are increasing.
\end{proposition}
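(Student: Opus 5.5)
The plan is a comparison argument based on the fact that, on $B_1(0)$, the obstacle $a_{\kappa,\gamma}=\kappa(1-|x|^2)^\gamma$ is pointwise monotone in both parameters. Since $0\le 1-|x|^2\le 1$, for $\gamma_1<\gamma_2$ we have $(1-|x|^2)^{\gamma_2}\le (1-|x|^2)^{\gamma_1}$, hence $a_{\kappa,\gamma_2}\le a_{\kappa,\gamma_1}$ in $B_1(0)$. Likewise $\kappa_1<\kappa_2$ gives $a_{\kappa_1,\gamma}< a_{\kappa_2,\gamma}$. So both parts reduce to one claim: if $a\le \tilde a$ and (\ref{eq 1.1}) with obstacle $a$ has a solution for some $\lambda$, then so does the problem with obstacle $\tilde a$. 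The larger obstacle then has the larger pull-in voltage.

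For $\lambda^*$, let $u$ solve (\ref{eq 1.1}) with obstacle $a$ (so $0<u\le a\le\tilde a$). Then
$$(-\Delta)^s u=\frac{\lambda}{(a-u)^2}\ge \frac{\lambda}{(\tilde a-u)^2},$$
so $u$ is a supersolution for the obstacle $\tilde a$, and it satisfies the constraint $u\le \tilde a$. I then run the monotone iteration of Proposition \ref{pr 2.1} with $\tilde a$: $v_0=0$, $v_n=\lambda\mathbb{G}_{s,\Omega}[(\tilde a-v_{n-1})^{-2}]$. By induction $v_n\le u$. Indeed, if $v_{n-1}\le u$, then $\tilde a-v_{n-1}\ge a-u>0$, so $v_n\le \lambda\mathbb{G}_{s,\Omega}[(a-u)^{-2}]=u$. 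The sequence is increasing and bounded by $u\le\tilde a$, so it converges to the minimal solution $\tilde u_\lambda\le u_\lambda$ for $\tilde a$. Consequently $\{\lambda:\text{(\ref{eq 1.1}) solvable with }a\}\subset\{\lambda:\text{solvable with }\tilde a\}$, and taking suprema gives $\lambda^*(a)\le\lambda^*(\tilde a)$. Applying this with $a=a_{\kappa,\gamma_2}$, $\tilde a=a_{\kappa,\gamma_1}$ shows that $\gamma\mapsto\lambda^*$ is decreasing, and applying it with $\kappa_1<\kappa_2$ shows that $\kappa\mapsto\lambda^*$ is increasing. The only technical point is that strictness $u<\tilde a$ in the interior is needed for a classical solution; this follows from the iteration bound and the definition used in Proposition \ref{pr 2.1}.

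For $\lambda_*$ I use its definition from the proof of Proposition \ref{pr 3.1.1}: $\lambda_*$ is the supremum of those $\lambda$ with $\limsup_{x\to\partial\Omega}u_\lambda\rho^{-\gamma}<\kappa$. The comparison above gives $\tilde u_\lambda\le u_\lambda$, which should transfer this boundary condition from the smaller obstacle to the larger one. This transfer is the main obstacle, because the threshold in the definition depends on $(\kappa,\gamma)$. Since $\rho=1-|x|$ and $1-|x|^2\sim 2\rho$ near the boundary, it is cleaner to write the condition as $\limsup u_\lambda/a<1$, i.e.\ $u_\lambda\le\theta a$ near $\partial\Omega$ with $\theta<1$.

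\emph{The $\kappa$ case.} Here $\tilde u_\lambda\le u_\lambda\le\theta a_{\kappa_1}=\theta(\kappa_1/\kappa_2)a_{\kappa_2}$, so the condition survives and $\lambda_*(\kappa_1)\le\lambda_*(\kappa_2)$.

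\emph{The $\gamma$ case.} With $\gamma_1<\gamma_2$, we get $\tilde u_\lambda\le u_\lambda\le \theta\kappa\rho^{\gamma_2}\cdot C=o(\rho^{\gamma_1})$. Hence $\limsup \tilde u_\lambda\rho^{-\gamma_1}=0<\kappa$, so the condition holds for $\gamma_1$, and $\gamma\mapsto\lambda_*$ is decreasing.

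If this definitional route proves delicate, I would instead characterize $\lambda_*$ through the validity of the two-sided bound (\ref{2.1.2}). Its upper half transfers in the same way via $\tilde u_\lambda\le u_\lambda$ together with Lemma \ref{lm 2.2}: one takes $u_\lambda$ as a supersolution with $u_\lambda\le\theta\tilde a$ and concludes $\tilde u_\lambda\le c\,\varrho_{2s-2\gamma_1}$.
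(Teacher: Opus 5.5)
Your proposal is correct and is essentially the paper's argument: you order the obstacles pointwise, use the minimal solution for the smaller obstacle as a supersolution for the larger one, and run the monotone iteration of Proposition \ref{pr 2.1}; your treatment of $\lambda_*$ supplies the details the paper dismisses as ``similar''. One small caveat is that $\limsup u_\lambda/a<1$ is not literally the paper's condition $\limsup u_\lambda\rho^{-\gamma}<\kappa$ (near $\partial B_1(0)$ one has $a\approx 2^\gamma\kappa\rho^\gamma$), but your transfer argument works for either formulation, since in the $\kappa$ case $\kappa_1<\kappa_2$ and in the $\gamma$ case the relevant $\limsup$ is $0$.
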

\noindent{\bf Proof.} Let $0<\gamma_2\le \gamma_1\le \frac23s$, $u_1, u_2$  be the minimal solutions of (\ref{eq 1.1})
with $a(x)=a_1(x)=\kappa (1-|x|^2)^{\gamma_1}$ and $a(x)=a_2(x)=\kappa (1-|x|^2)^{\gamma_2}$, respectively.
Observe that $a_1\le a_2$ in $B_1(0)$
and  for any $\lambda\in(0,\lambda^*(\kappa, \gamma_1))$, we have that
$$(-\Delta)^s u_1=\frac{\lambda}{(a_1-u_1)^2}\ge \frac{\lambda}{(a_2-u_1)^2}\quad {\rm in} \ \ B_1(0),$$
that is, $u_1$ is a super solution of (\ref{eq 1.1}) with  $a=a_2$. Similar to the proof of Proposition \ref{pr 2.1},
problem (\ref{eq 1.1})
with  $a=a_2$ admits the minimal solution for any $\lambda\in(0,\lambda^*(\kappa, \gamma_1))$. By the definition of $\lambda^*(\kappa, \gamma_2)$,
it yields that
$$\lambda^*(\kappa, \gamma_2)\ge \lambda^*(\kappa, \gamma_1).$$
Thus, we obtain that the mapping $\gamma\mapsto \lambda^*(\kappa,\gamma)$ is decreasing.
Similar the way to show that $\gamma\mapsto \lambda_*(\kappa,\gamma)$  is decreasing, $\kappa\mapsto \lambda^*(\kappa,\gamma)$
and $\kappa\mapsto \lambda_*(\kappa,\gamma)$ are increasing. The proof ends. \qquad $\hfill\Box$

\smallskip

Now we are ready to give the estimate for $\lambda^*$.

\begin{proposition}\label{pr 3.2}
Assume that $\Omega=B_1(0)$ and  the function $a$ satisfies (\ref{a 1.1}) with $\kappa>0$, $\gamma\in(0,\frac23s]$. Then
\begin{equation}\label{3.1-1}
 \lambda^*(\kappa,\gamma)\le  \frac{\kappa^3\, \mathcal{B}(\frac12,\gamma+1)}{\bar c_{N,s}\, \mathcal{B}(\frac12,s-2\gamma+1)},
\end{equation}
where $\bar c_{N,s}$ is a positive constant and $\mathcal{B}(\cdot,\cdot)$ is the Beta function.
 \end{proposition}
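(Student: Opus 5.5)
Start from the bound (\ref{2.4}) in Proposition \ref{pr 2.1},
$$\lambda^*\le \frac{\int_{B_1}a\,dx}{\int_{B_1}\mathbb{G}_{s,B_1}[1]\,a^{-2}\,dx},$$
and evaluate both integrals explicitly for $a(x)=\kappa(1-|x|^2)^\gamma$. The key input is the classical closed form for the torsion function of the ball,
$$\mathbb{G}_{s,B_1}[1](x)=\bar c_{N,s}(1-|x|^2)^s,\qquad \bar c_{N,s}=\frac{\Gamma(\frac N2)}{2^{2s}\Gamma(1+s)\Gamma(\frac{N+2s}2)}$$
(Getoor / Dyda). This is consistent with the normalization $c_{N,s}$ of (\ref{fl 1}). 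With it, the denominator becomes
$$\frac{\bar c_{N,s}}{\kappa^2}\int_{B_1}(1-|x|^2)^{s-2\gamma}\,dx,$$
which is finite because $s-2\gamma>-1$ for $\gamma\le\frac23 s<\frac23$.

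Both integrals are radial, so I reduce them to Beta functions. Using polar coordinates and the substitution $t=r^2$,
$$\int_{B_1}(1-|x|^2)^{\alpha}dx=\frac{|S^{N-1}|}{2}\,\mathcal{B}\Big(\frac N2,\alpha+1\Big).$$
Take $\alpha=\gamma$ in the numerator and $\alpha=s-2\gamma$ in the denominator. The common factor $|S^{N-1}|/2$ cancels, leaving
$$\lambda^*\le \frac{\kappa^3\,\mathcal{B}(\frac N2,\gamma+1)}{\bar c_{N,s}\,\mathcal{B}(\frac N2,s-2\gamma+1)}.$$

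The statement has $\mathcal{B}(\frac12,\cdot)$, not $\mathcal{B}(\frac N2,\cdot)$. This suggests the authors either reduce to a one-dimensional integral, or use a cruder slicing bound and absorb the $N$-dependence into $\bar c_{N,s}$. If the two Beta forms do not match exactly, I would do the following. Write both integrals via $t=r^2$. Then apply elementary comparisons: bound $\int_0^1(1-t)^{\gamma}t^{N/2-1}dt$ from above, and $\int_0^1(1-t)^{s-2\gamma}t^{N/2-1}dt$ from below, by multiples of the corresponding $\mathcal{B}(\frac12,\cdot)$ integrals. For example, restrict to $t\in[\frac12,1]$, where $t^{N/2-1}$ is comparable to a constant, and compare with $t^{-1/2}$. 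The resulting $N$-dependent constants are absorbed into $\bar c_{N,s}$, which the statement only asserts is a positive constant.

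The main obstacle is justifying the closed form of $\mathbb{G}_{s,B_1}[1]$. If I prefer to avoid citing it, I can get by with a weaker fact: a lower bound $\mathbb{G}_{s,B_1}[1]\ge \bar c_{N,s}(1-|x|^2)^s$ suffices, because the torsion function enters only the denominator. One route is the comparison principle. Since $(1-|x|^2)_+^s$ has constant fractional Laplacian on $B_1$, an explicit constant multiple of it equals $\mathbb{G}_{s,B_1}[1]$ by uniqueness. Another is to use $\mathbb{G}_{s,\Omega}[1]\ge c\rho^s$ from Lemma \ref{lm 2.1} and the case $\tau\in(s,2s)$ in its proof, combined with $\rho\ge\frac12(1-|x|^2)$ in $B_1$. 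Everything after that is the routine Beta-function computation.
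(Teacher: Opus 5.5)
Your proposal is correct and follows the paper's route: the bound (\ref{2.4}), the explicit torsion function $\mathbb{G}_{s,B_1(0)}[1]=\bar c_{N,s}(1-|x|^2)^s$, and Beta integrals. At one point your computation differs from the paper's, and yours is the right one. The paper writes $\int_{B_1(0)}\kappa(1-|x|^2)^\gamma\,dx=\kappa|S^{N-1}|\int_0^1(1-r^2)^\gamma\,dr$, and does the same for the denominator, i.e.\ it drops the Jacobian $r^{N-1}$. That omission is the only reason $\mathcal{B}(\frac12,\cdot)$ appears. With the Jacobian one gets exactly your $\mathcal{B}(\frac N2,\cdot)$.

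Consequently your final comparison step is not cosmetic. The ratio $\mathcal{B}(p,\gamma+1)/\mathcal{B}(p,s-2\gamma+1)=\Gamma(\gamma+1)\Gamma(p+s-2\gamma+1)/\big(\Gamma(s-2\gamma+1)\Gamma(p+\gamma+1)\big)$ is strictly increasing in $p$ when $\gamma<\frac s3$. So for such $\gamma$ and $N\ge 2$, the displayed bound with $\bar c_{N,s}$ equal to the torsion constant is strictly smaller than what (\ref{2.4}) gives. The statement follows from (\ref{2.4}) only after replacing the torsion constant by a smaller positive constant. That is exactly what your absorption does, and it is allowed, since the statement only calls $\bar c_{N,s}$ a positive constant.

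When you write it up, make the absorbed constant explicitly independent of $\gamma$ and $\kappa$; otherwise the statement says almost nothing.
\begin{itemize}
\item The numerator is immediate: $\mathcal{B}(\frac N2,\gamma+1)\le\mathcal{B}(\frac12,\gamma+1)$, because $t^{\frac N2-1}\le t^{-\frac12}$ on $(0,1)$.
\item For the denominator, $q=s-2\gamma+1$ stays in $[1-\frac s3,1+s)\subset[\frac23,2]$. There $q\mapsto\mathcal{B}(\frac N2,q)/\mathcal{B}(\frac12,q)$ is continuous and positive, hence bounded below by some $c_N>0$.
\end{itemize}
The final constant is then $c_N\bar c_{N,s}$. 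Your restriction to $t\in[\frac12,1]$ also works, but then you must control the $[0,\frac12]$ part of $\mathcal{B}(\frac12,q)$, which again uses that $q$ ranges over a compact set.
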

\noindent{\bf Proof.} By Proposition \ref{pr 2.1}, we have that
$$
\lambda^*(\kappa,\gamma)\le \frac{\int_{B_1(0)} a(x) dx}{ \int_{B_1(0)}\frac{\mathbb{G}_{s,{B_1(0)}}[1](x)}{a(x)^2}dx}.
$$
Since $a$ satisfies (\ref{a 1.1}),  by direct computation, we have that
\begin{align*}
\int_{B_1(0)}a(x)dx=\int_{B_1(0)}\kappa(1-|x|^2)^{\gamma}dx
& =\kappa\, |S^{N-1}| \int_0^1 (1-r^2)^{\gamma}dr
\\&=\frac12 \kappa\, |S^{N-1}| \int_0^1 (1-t)^{\gamma}t^{-\frac12}dt
\\&=\frac12 \kappa\, |S^{N-1}| \,\mathcal{B}(\frac12,\gamma+1),
\end{align*}
where $S^{N-1}$ is the unit sphere in $\R^N$.
Observe that there exists $\bar c_{N,s}>0$ such that
 $$ \mathbb{G}_{s,B_1(0)}[1](x)={ \bar c_{N,s}}(1-|x|^2)^s,\qquad \forall \ x\in B_1(0),$$
and then
 \begin{align*}
 \int_{B_1(0)}\frac{\mathbb{G}_{s,B_1(0)}[1](x)}{a(x)^2}dx= \frac{\bar c_{N,s}}{\kappa^2} \int_{B_1(0)}(1-|x|^2)^{s-2\gamma} dx
 =  \frac{\bar c_{N,s}\,|S^{N-1}|}{2\kappa^2} \mathcal{B}(\frac12,s-2\gamma+1).
 \end{align*}
Therefore, (\ref{3.1-1}) holds.  The proof ends.
\qquad$\hfill\Box$

\smallskip

\begin{proposition}\label{pr 3.1}
Assume that $\Omega=B_1(0)$ and $a$ satisfies (\ref{a 1.1})
with $\kappa>0$ and $\gamma\in(0,\frac{2}{3}s]$.
Then there exists $c_s>0$ such that
\begin{equation}\label{3.3}
 \lambda_*(\kappa,\gamma)\ge \frac{4}{27}{c_s}\,\kappa^3.
\end{equation}
\end{proposition}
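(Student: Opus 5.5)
The plan is to build an explicit supersolution of the form $\bar u=t\,a$ with $t\in(0,1)$, and then to use the $4/27$ optimization that comes from $\max_{t\in(0,1)}t(1-t)^2=\frac4{27}$, attained at $t=\frac13$. Concretely, set $\bar u=\mu\,\mathbb{G}_{s,B_1(0)}[(1-|x|^2)^{-2\gamma}]$, or more simply a multiple of $w:=\mathbb{G}_{s,B_1(0)}[\rho^{-2\gamma}]$ comparable to $a$. By Lemma \ref{lm 2.1} with $\tau=2s-2\gamma$, we have $w\le c\,\varrho_{2s-2\gamma}\le c'(1-|x|^2)^{\gamma}$ in $B_1(0)$, because $\min\{s,2s-2\gamma\}\ge\gamma$ when $\gamma\le\frac23 s$ (with the logarithmic case $\gamma=\frac s2$ handled the same way). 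Define $c_s>0$ so that $w\le \frac1{c_s'}(1-|x|^2)^\gamma$, uniformly in $\gamma\in(0,\frac23s]$ if possible; otherwise let the constant depend on $\gamma$ as in the paper's convention.

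First, take $\bar u:=t\kappa\,c_s' w$, so that $\bar u\le t a$. Then compute
$$(-\Delta)^s\bar u=t\kappa c_s'(1-|x|^2)^{-2\gamma}=\frac{t\kappa^3c_s'}{a^2}\ge \frac{t(1-t)^2\kappa^3c_s'}{(a-\bar u)^2},$$
using $a-\bar u\ge(1-t)a$. Hence $\bar u$ is a supersolution of (\ref{eq 1.1}) for every $\lambda\le t(1-t)^2c_s'\kappa^3$. Choosing $t=\frac13$ gives the range $\lambda\le\frac4{27}c_s\kappa^3$ with $c_s=c_s'$. Since $\bar u\le\frac13 a$, Lemma \ref{lm 2.2} with $\theta=\frac13$ yields the minimal solution $u_\lambda\le\bar u\le\frac13 a$. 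In particular $\lambda^*\ge\frac4{27}c_s\kappa^3$.

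Second, to pass from $\lambda^*$ to $\lambda_*$, use $u_\lambda\le\bar u\le\frac13\kappa(1-|x|^2)^\gamma\le\frac{2^\gamma}{3}\kappa\rho^\gamma$. This gives $\limsup u_\lambda\rho^{-\gamma}<\kappa$ near $\partial B_1$, at least after a harmless adjustment of the threshold (take $t$ slightly smaller, or compare with $\rho^\gamma$ directly, since $(1-|x|^2)\sim 2\rho$). By the definition of $\lambda_*$ in Proposition \ref{pr 3.1.1}, every $\lambda<\frac4{27}c_s\kappa^3$ lies below $\lambda_*$, which proves (\ref{3.3}).

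The main obstacle is the bookkeeping between $\rho^\gamma$ and $(1-|x|^2)^\gamma$ in the definition of $\lambda_*$, together with the constant $c_s$ absorbing the comparability constant of Lemma \ref{lm 2.1}. To sidestep the first issue, I would first try to work directly with $\mathbb{G}_{s,B_1}[(1-|x|^2)^{-2\gamma}]$, using the explicit ball structure to bound it by a constant times $(1-|x|^2)^\gamma$.
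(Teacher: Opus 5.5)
Your proof is correct, but it builds the supersolution differently from the paper. The paper works only at $\gamma=\frac23 s$. There it takes the explicit power $w_t=t\kappa(1-|x|^2)^{\frac23 s}=t\,a$ and asserts $(-\Delta)^s w_t\ge c_s t\kappa(1-|x|^2)^{-\frac43 s}$ on all of $B_1(0)$. Proposition \ref{pr 2.1.0} only gives this in a boundary strip, so the interior part rests on the explicit formula for $(-\Delta)^s(1-|x|^2)_+^p$ on the ball. The paper then optimizes $t(1-t)^2$ and transfers the bound to $\gamma<\frac23 s$ through the monotonicity of $\gamma\mapsto\lambda_*(\kappa,\gamma)$ in Proposition \ref{lm 3.1}.

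You use a Green potential instead, so $(-\Delta)^s\bar u$ is known exactly. Your only analytic input is the upper estimate $\mathbb{G}_{s,\Omega}[\rho^{\tau-2s}]\le c_\tau\varrho_\tau$, which is proved in Section 2, and you never need monotonicity in $\gamma$. Your route also never uses the ball: $\bar u=\frac13\kappa C^{-1}\mathbb{G}_{s,\Omega}[\rho^{-2\gamma}]$ gives $\lambda_*\ge\frac4{27}C^{-1}\kappa^3$ for any $C^2$ domain and any $a$ satisfying (\ref{1.1}). If you use the density $\rho^{-2\gamma}$, your displayed identity becomes an inequality, since $\rho\le 1-|x|^2$, and that is all you need. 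The bookkeeping you flagged is a non-issue: $1-|x|^2\le 2\rho$ in $B_1(0)$, so $u_\lambda\le\frac13 a\le\frac{2^\gamma}{3}\kappa\rho^\gamma$ with $\frac{2^\gamma}{3}<1$, and no adjustment of $t$ is needed.

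Do not leave the constant at ``uniformly if possible''. Applying the Section 2 Green estimate with $\tau=2s-2\gamma$ separately for each $\gamma$ gives a $\gamma$-dependent constant. Its proof, through Proposition \ref{pr 2.1.0}, controls that constant by $|\psi_s(\tau)|$, which vanishes at $\tau=s$, i.e. at $\gamma=\frac s2$; so you cannot take an infimum over $\gamma$ this way. This matters because scaling gives $\lambda_*(\kappa,\gamma)=\kappa^3\lambda_*(1,\gamma)$. With a $\gamma$-dependent constant, (\ref{3.3}) would say no more than $\lambda_*>0$, whereas the paper's $c_s$ is independent of $\gamma$. The fix is one line: since $0<1-|x|^2\le1$ and $\gamma\le\frac23 s$,
\[
\mathbb{G}_{s,B_1(0)}[(1-|x|^2)^{-2\gamma}]\le \mathbb{G}_{s,B_1(0)}[(1-|x|^2)^{-\frac43 s}]\le C_0(1-|x|^2)^{\frac23 s}\le C_0(1-|x|^2)^{\gamma},
\]
where $C_0$ comes from the single case $\tau=\frac23 s$. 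Alternatively, run your argument only at $\gamma=\frac23 s$ and then invoke Proposition \ref{lm 3.1}, as the paper does.
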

\noindent{\bf Proof.}
Let $t\in(0,1)$,
denote by $w_t(|x|)=t\kappa (1-|x|^2)^{\frac{2}{3}s}$,  we have that
\begin{align*}
(-\Delta)^s w_t(|x|) &\ge c_s t\kappa  (1-|x|^2)^{-\frac{4}{3}s},\qquad x\in B_1(0),
\end{align*}
for some $c_s>0$. Since $a$ satisfies (\ref{a 1.1}), it yields that
$$
\frac{\lambda }{(a(x)-w_t(|x|))^2}=  \frac{ \lambda }{(1-t)^{2}\kappa^{2}}  (1-|x|^2)^{-\frac{4}{3}s}.
$$
Then, for
$\lambda  \le \lambda_t:=c_s t(1-t)^2\kappa^3$,
$w_t$ is a super solution of problem (\ref{eq 1.1}). Observe that when $t=\frac13$,  $\max_{t\in(0,1)}{t(1-t)^2}=\frac{4}{27}$.
By Lemma \ref{lm 2.2}, we  obtain that problem (\ref{eq 1.1}) admits the minimal solution $u_{\lambda_{\frac13}}$. Then
 $\lambda_*(\kappa,{\frac23}s)\ge \lambda_{\frac13}=\frac{4}{27}{c_s}\, \kappa^3$.
 By Proposition \ref{lm 3.1}, the mapping $\gamma\mapsto \lambda_*(\kappa,\gamma)$  is decreasing,
then (\ref{3.3}) holds.
The proof ends.
\qquad$\hfill\Box$

\smallskip

\begin{lemma}\label{lm 4.4}
Assume that
$a\in C^\gamma(\Omega)\cap C(\bar\Omega)$  satisfies  (\ref{1.1}) and (\ref{a 1.0})
with $c\ge\kappa>0$, $\gamma\in(0,\frac23s]$, $\lambda_*$ is given in Theorem \ref{teo 1} and $ u_\lambda$ is the minimal solution of (\ref{eq 1.1})
with  $\lambda\in(0,\lambda_*)$.
Then $u_\lambda\in H^s_0(\Omega)$ and  there exists $\tilde c_1>0$   such that
$$ \int_{\Omega} \frac{u_\lambda}{(a-u_{\lambda})^2} dx\le \tilde c_1\lambda \qquad{\rm and}\qquad \int_\Omega |(-\Delta)^\frac s2 u_\lambda|^2 dx\le \tilde c_1\lambda^2.$$

\end{lemma}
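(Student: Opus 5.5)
The plan is to feed the two-sided boundary estimate (\ref{2.1.2}) from Proposition \ref{pr 3.1.1}, valid for $\lambda\in(0,\lambda_*)$, into the weak formulation, testing with $u_\lambda$ itself. Since $\lambda<\lambda_*$, by the definition of $\lambda_*$ and (\ref{eq921}) we have $u_\lambda\le \bar c_1\lambda\varrho_{2s-2\gamma}$. Because $\min\{s,2s-2\gamma\}\ge\gamma$ for $\gamma\le\frac23 s$ (with the logarithmic correction when $\gamma=\frac12 s$, still $\le C\rho^\gamma$), this gives $u_\lambda\le \bar c_1\lambda\rho^\gamma$. It also gives $\limsup u_\lambda\rho^{-\gamma}<\kappa$ near $\partial\Omega$, so there is $\theta\in(0,1)$ with $u_\lambda\le\theta a$ in $A_\delta$. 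In the interior, $u_\lambda<a$ with both functions continuous and positive on the compact set $\Omega\setminus A_\delta$, so enlarging $\theta$ we get $u_\lambda\le\theta a$ in all of $\Omega$. Hence
\[
a-u_\lambda\ge(1-\theta)a\ge(1-\theta)\kappa\rho^\gamma .
\]

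For the first integral, I would combine these bounds:
\[
\frac{u_\lambda}{(a-u_\lambda)^2}\le \frac{\bar c_1\lambda\varrho_{2s-2\gamma}}{(1-\theta)^2\kappa^2\rho^{2\gamma}}\le C\lambda\,\rho^{\min\{s,2s-2\gamma\}-2\gamma}
\]
(up to a log factor when $\gamma=\frac s2$). The exponent is $\ge 2s-4\gamma\ge -\frac23 s>-1$ when $2s-2\gamma\le s$, and is $s-2\gamma>-1$ otherwise. Either way the right-hand side is integrable, which yields $\int_\Omega u_\lambda(a-u_\lambda)^{-2}\le\tilde c_1\lambda$. One point needs care: $\theta$ depends on $\lambda$. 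Uniformity in $\lambda$ is not claimed, since $\tilde c_1$ may depend on the fixed $\lambda$, but if desired one can note that $\theta$ can be chosen monotonically as $\lambda$ decreases.

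For the energy bound, I would show that $u_\lambda\in H^s_0(\Omega)$ and that
\[
\int|(-\Delta)^{s/2}u_\lambda|^2=\lambda\int_\Omega \frac{u_\lambda}{(a-u_\lambda)^2},
\]
which together with the first bound gives $\le\tilde c_1\lambda^2$. To justify testing with $u_\lambda$, I would approximate by $\xi_n=u_\lambda\eta_n$ with the cutoffs $\eta_n$ from (\ref{101}), or more cleanly by truncations $T_k$ of the source $f=\lambda(a-u_\lambda)^{-2}$, setting $w_k=\mathbb{G}_{s,\Omega}[\min\{f,k\}]\in H^s_0$. Then
\[
\|w_k\|^2_{H^s_0}=\int \min\{f,k\}\,w_k\le\int f u_\lambda,
\]
since $w_k\le u_\lambda$. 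So $\{w_k\}$ is bounded in $H^s_0$; it increases to $u_\lambda$, and weak lower semicontinuity gives $u_\lambda\in H^s_0$ with $\|u_\lambda\|^2\le\int fu_\lambda$. (Equality follows by monotone convergence, but the inequality suffices.)

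The main obstacle is the first step: securing $a-u_\lambda\ge c\rho^\gamma$ together with the sharp decay $u_\lambda\lesssim\rho^{\min\{s,2s-2\gamma\}}$. Without the gap the integrand could be as bad as $\rho^{-2\gamma}\cdot\rho^{\gamma}$, which is still integrable, but $(a-u_\lambda)^{-2}$ uncontrolled is not. This is exactly why the restriction $\lambda<\lambda_*$ is needed, and why I would rely on the definition of $\lambda_*$ and (\ref{2.1.2}) rather than on Proposition \ref{pr 2.1} alone.
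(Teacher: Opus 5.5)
Your proposal is correct. Its first half is the paper's own argument: the paper also combines (\ref{2.1.2}) with a gap $a-u_\lambda\ge(1-\theta)\kappa\rho^\gamma$ and then integrates $\rho^{\min\{s,2s-2\gamma\}-2\gamma}$, resp.\ $\ln\frac1\rho$ when $\gamma=\frac s2$. As in your sketch, $\theta$ comes near $\partial\Omega$ from the boundary decay and on $\Omega\setminus A_\delta$ from $u_\lambda<a$ plus compactness, and $\tilde c_1$ is allowed to depend on $\lambda$ through $\theta$.

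One point should be said explicitly. For $\gamma=\frac23s$, (\ref{2.1.2}) only gives $u_\lambda\le\bar c_1\lambda\rho^\gamma$. So $\limsup_{x\to\partial\Omega}u_\lambda\rho^{-\gamma}<\kappa$ does not follow from (\ref{2.1.2}). It comes from the definition of $\lambda_*$ together with the monotonicity of $\lambda\mapsto u_\lambda$. The paper glosses over the same point when it passes from (\ref{eq 001}) to $u_\lambda<\theta_1 a$ in $A_\delta$.

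Where you genuinely differ is the energy bound. The paper approximates $u_\lambda$ by $\xi_n\in C_c^2(\Omega)$ and passes to the limit in $\int_\Omega(-\Delta)^{\frac s2}u_\lambda\,(-\Delta)^{\frac s2}\xi_n\,dx=\lambda\int_\Omega\xi_n(a-u_\lambda)^{-2}dx$. As written, this presupposes $u_\lambda\in H^s_0(\Omega)$ and $\xi_n\to u_\lambda$ in that space, which is part of what is to be proved.

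Your truncation $w_k=\mathbb{G}_{s,\Omega}[\min\{f,k\}]$ avoids this. It uses only three facts:
\begin{itemize}
\item $G_{s,\Omega}\ge0$;
\item the representation $u_\lambda=\lambda\mathbb{G}_{s,\Omega}[(a-u_\lambda)^{-2}]$ inherited from the monotone iteration;
\item $\int_\Omega fu_\lambda\,dx<\infty$ from your first step.
\end{itemize}
Weak compactness and lower semicontinuity then give $u_\lambda\in H^s_0(\Omega)$ with $\int|(-\Delta)^{\frac s2}u_\lambda|^2dx\le\lambda\int_\Omega u_\lambda(a-u_\lambda)^{-2}dx$, which is all the lemma needs. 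Your route costs a few lines but is non-circular, which the paper's is not.

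A minor correction: monotone convergence alone does not give the identity. You also need $\|w_k\|^2\le\int_\Omega\min\{f,k\}u_\lambda\,dx=\langle w_k,u_\lambda\rangle\le\|w_k\|\,\|u_\lambda\|$, which yields $\int_\Omega fu_\lambda\,dx=\lim_k\|w_k\|^2\le\|u_\lambda\|^2$.
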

\noindent{\bf Proof.} For $\lambda\in(0,\lambda_*)$ and $\gamma\in(0,\frac23s]\setminus{\{\frac12s\}}$, by (\ref{2.1.2}), we have that
\begin{equation}\label{eq 001}
 u_\lambda(x)\le \bar c_1\lambda\rho(x)^{\min\{s,2s-2\gamma\}},\quad \  x\in\Omega.
  \end{equation}
Since $\kappa \rho(x)^\gamma\le a(x)\le c \rho(x)^\gamma$ for $x\in\Omega$,
 there exists $\theta_1\in(0,1)$ such that $u_\lambda<\theta_1 a$ in $A_{\delta}=\{x\in\Omega:\, \rho(x)<\delta\}$ for $\delta>0$ small.
Moreover, by the fact that $u_\lambda<a $ in $\Omega$, there exists $\theta_2\in(0,1)$ such that $u_\lambda\le \theta_2a$ in $\Omega\setminus{A_{\delta}}$.
Let $\theta=\max\{\theta_1,\theta_2\}$,  we have that $u_\lambda\le \theta a$ in $\Omega$ and then
\begin{equation}\label{eq 01}
a-u_\lambda\ge (1-\theta) a \ge (1-\theta)\kappa \rho^\gamma\quad {\rm in} \ \ \Omega.
 \end{equation}
Therefore,   combining with (\ref{eq 001}), (\ref{eq 01}), and by the fact that $\min\{s,2s-2\gamma\}-2\gamma>-s>-1$,  we have that
 \begin{equation}\label{4.1.2}
 \int_\Omega \frac{u_\lambda}{(a-u_\lambda)^2}dx\le \int_\Omega \frac{ \bar c_1\lambda\rho(x)^{\min\{s,2s-2\gamma\}}}{(1-\theta)^2\kappa^2 \rho(x)^{2\gamma}}dx:=\tilde c_1\lambda<+\infty,
 \end{equation}
 Taking a sequence $\{\xi_n\}_n\subset C_c^2(\Omega)$  which converges to
$u_\lambda$ as $n\to\infty$, since $u_\lambda$ is the minimal solution of  (\ref{eq 1.1}), we have that
$$\int_\Omega (-\Delta)^\frac s2 u_\lambda\cdot (-\Delta)^\frac s2{\xi_n} dx=\int_\Omega \frac{\lambda \xi_n}{(a-u_\lambda)^2} dx.$$
Passing to the limit as $n\to\infty$, we obtain that
$$
 \int_\Omega  |(-\Delta)^\frac s2 u_\lambda|^2dx =  \int_\Omega \frac{\lambda u_\lambda}{(a-u_\lambda)^2}dx\le \tilde c_1\lambda^2.
$$

For $\lambda\in(0,\lambda_*)$ and $\gamma=\frac12s$,   by (\ref{2.1.2}), we have that
$$ u_\lambda(x)\le \bar c_1\lambda\rho(x)^s \ln\frac{1}{\rho(x)},\quad \  x\in\Omega,$$
similar to show that there exists $\theta\in(0,1)$ such that $u_\lambda\le \theta a$ in $\Omega$ and then (\ref{eq 01}) holds, then
 \begin{equation}\label{4.1.20p}
 \int_\Omega \frac{u_\lambda}{(a-u_\lambda)^2}dx\le \int_\Omega \frac{ \bar c_1\lambda}{(1-\theta)^2\kappa^2 }\ln\frac{1}{\rho(x)}dx\le \tilde c_1\lambda,
 \end{equation}
therefore, $\int_\Omega  |(-\Delta)^\frac s2 u_\lambda|^2dx \le \tilde c_1\lambda^2.$
The proof ends.\qquad$\hfill\Box$

\medskip

\noindent{\bf Proof of Theorem \ref{teo 1}.} The existence of   minimal solution for $\lambda\in(0,\lambda^*)$
and the nonexistence for $\lambda>\lambda^*$ follow by Proposition \ref{pr 2.1}. The proof of  Theorem \ref{teo 1} $(iii)$
is true by Proposition \ref{pr 3.1.1} and Lemma \ref{lm 4.4}. The estimates of $\lambda^*$ and $\lambda_*$ are obtained by Proposition \ref{lm 3.1},
Proposition \ref{pr 3.2} and Proposition \ref{pr 3.1}.\qquad $\hfill\Box$

\setcounter{equation}{0}
\section{Properties of minimal solution}

%\subsection{Regularity}

In this section, we first study the solutions of (\ref{eq 1.1}) when $\lambda=\lambda^*$.
\begin{proposition}\label{pr 4.1}

Assume that the function
$a\in C^\gamma(\Omega)\cap C(\bar\Omega)$  satisfies (\ref{1.1}) and (\ref{a 1.0})
with $c\ge\kappa>0$, $\gamma\in(0,\frac23s]$ and $ u_{\lambda^*}$ is given by (\ref{4.2}).
Then  $ u_{\lambda^*}$ is a weak solution of (\ref{eq 1.1}) with $\lambda=\lambda^*$. Moreover,
for any $\beta\in(0,\min\{\gamma-s+1,s\})$,  there exists $c_\beta>0$ such that
 \begin{equation}\label{4.0.5}
 \norm{u_{\lambda^*}}_{W^{s,\frac{N}{N-\beta}}(\Omega)}\le c_\beta
 \end{equation}
and
\begin{equation}\label{4.0.6}
 \int_{\Omega} \frac{\rho^{s-\beta}}{(a-u_{\lambda^*})^2} dx\le c_\beta.
 \end{equation}
\end{proposition}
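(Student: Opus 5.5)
The plan is to obtain uniform-in-$\lambda$ estimates for the minimal solutions $u_\lambda$, $\lambda\in(0,\lambda^*)$, and then pass to the limit $\lambda\to\lambda^*$ using the monotone convergence (\ref{4.2}). The key a priori bound is a weighted integrability of the right-hand side. I would repeat the test-function argument from the proof of Proposition \ref{pr 2.1} with $\xi_n$ replaced by a cut-off of a function with a prescribed boundary rate. Let $\eta_n$ be as in (\ref{101}) and set $\zeta=\mathbb{G}_{s,\Omega}[\rho^{-\beta}]$ with $\beta\in(0,s)$. This is $\mathbb{G}_{s,\Omega}[\rho^{\tau-2s}]$ with $\tau=2s-\beta\in(s,2s)$, so Lemma \ref{lm 2.1} gives $\zeta\asymp\rho^s$. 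Testing the equation for $u_\lambda$ against $\zeta\eta_n$ and letting $n\to\infty$, exactly as in (\ref{eq9281}), yields
$$\lambda\int_\Omega\frac{\zeta}{(a-u_\lambda)^2}dx=\int_\Omega u_\lambda\,\rho^{-\beta}dx\le\int_\Omega a\rho^{-\beta}dx\le c\int_\Omega\rho^{\gamma-\beta}dx<\infty,$$
using (\ref{a 1.0}) and $u_\lambda\le a$. This integral is finite when $\beta<\gamma+1$, which holds. Since $\zeta\ge c^{-1}\rho^s\ge c^{-1}\rho^{s-\beta}$, for $\lambda\ge\lambda^*/2$ this gives $\int_\Omega\rho^{s-\beta}(a-u_\lambda)^{-2}dx\le c_\beta$ uniformly in $\lambda$. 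Monotone convergence (Fatou) then gives (\ref{4.0.6}).

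For the weak formulation, fix $\xi\in C^2_c(\Omega)$. Then $\int u_\lambda(-\Delta)^s\xi\to\int u_{\lambda^*}(-\Delta)^s\xi$ by dominated convergence, since $u_\lambda\le a$. On the support of $\xi$, $\rho$ is bounded below, and $(a-u_\lambda)^{-2}$ increases to $(a-u_{\lambda^*})^{-2}$. Monotone convergence combined with the uniform weighted bound gives convergence of the right-hand side, and in particular $(a-u_{\lambda^*})^{-2}\in L^1_{loc}$. So $u_{\lambda^*}$ is a weak solution with $0\le u_{\lambda^*}\le a$.

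For (\ref{4.0.5}), write $u_\lambda=\lambda\mathbb{G}_{s,\Omega}[f_\lambda]$ with $f_\lambda=(a-u_\lambda)^{-2}$, which is bounded in $L^1(\Omega,\rho^{s-\beta}dx)$. I would then invoke the known regularity of the fractional Green operator on weighted $L^1$ data: $\mathbb{G}_{s,\Omega}$ maps $L^1(\Omega,\rho^{s-\beta}dx)$ boundedly into $W^{s,\frac{N}{N-\beta}}_0(\Omega)$ (Chen–Véron type estimates). This gives a uniform bound on $\|u_\lambda\|_{W^{s,N/(N-\beta)}}$. Weak lower semicontinuity of the norm, together with $u_\lambda\to u_{\lambda^*}$ pointwise, passes the bound to $u_{\lambda^*}$. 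The restriction $\beta<\gamma-s+1$ should enter here or in making $\int a\rho^{-\beta}$ and the Green-kernel estimates compatible. If needed, I would instead take $\zeta=\mathbb{G}_{s,\Omega}[\rho^{-(\beta+1-s)}]$-type weights to match the exponent in the cited embedding.

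I expect the main obstacle to be this last step. It requires an $L^1$-weighted-to-$W^{s,p}$ mapping property of $\mathbb{G}_{s,\Omega}$ with exactly the weight $\rho^{s-\beta}$ and exponent $\frac N{N-\beta}$, which is not among the earlier results. Pinning down the right weight, so that the admissible range is precisely $\beta<\min\{\gamma-s+1,s\}$, is where the care goes.
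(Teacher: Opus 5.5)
\textbf{There is a genuine gap.} Your key weighted estimate rests on a reversed inequality. Since $\rho\le\frac12$ and $\beta>0$, one has $\rho^{s}=\rho^{\beta}\rho^{s-\beta}\le\rho^{s-\beta}$. So $\zeta=\mathbb{G}_{s,\Omega}[\rho^{-\beta}]\asymp\rho^s$ is \emph{not} bounded below by $\rho^{s-\beta}$. Testing against this $\zeta$ only yields $\sup_{\lambda<\lambda^*}\lambda\int_\Omega\rho^{s}(a-u_\lambda)^{-2}dx<\infty$, a bound in which $\beta$ plays no role.

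That bound does not imply (\ref{4.0.6}), and it is too weak for (\ref{4.0.5}). With data only in $L^1(\Omega,\rho^s dx)$, the estimate (\ref{einq-1111}) can only be used with $t_1=s$. Then $t_2=s+\frac{N(p-1)}{p}>s$, and you land in $W^{2s-t_2,p}$, with strictly fewer than $s$ derivatives, for every $p>1$. Your passage to the weak formulation is fine, since on $\mathrm{supp}\,\xi$ the weight is bounded below. Your uncertainty about where $\beta<\gamma-s+1$ enters is a symptom of the wrong choice of weight.

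The missing idea is to choose the test function with the boundary rate $\rho^{s-\beta}$ you actually need. Take $\zeta=\mathbb{G}_{s,\Omega}[\rho^{-s-\beta}]$, i.e.\ apply the Section 2 Green estimate $\mathbb{G}_{s,\Omega}[\rho^{\tau-2s}]\asymp\varrho_\tau$ with $\tau=s-\beta\in(0,s)$. This is where $\beta<s$ is used, and it gives $\zeta\asymp\rho^{s-\beta}$. Your Tonelli identity then reads
\[
\lambda\int_\Omega\frac{\zeta}{(a-u_\lambda)^2}\,dx=\int_\Omega u_\lambda\,\rho^{-s-\beta}\,dx\le c\int_\Omega\rho^{\gamma-s-\beta}\,dx .
\]
The right-hand side is finite exactly when $\gamma-s-\beta>-1$, i.e.\ $\beta<\gamma-s+1$. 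This is where the second restriction comes from, not from a weight such as $\rho^{-(\beta+1-s)}$.

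This gives (\ref{4.0.6}) uniformly for $\lambda\in[\lambda^*/2,\lambda^*)$. Then (\ref{einq-1111}) with $t_1=s-\beta$ and $p=\frac N{N-\beta}$ gives $t_2=t_1+\frac{N(p-1)}{p}=s$, which is precisely the $W^{s,\frac N{N-\beta}}$ bound. The admissibility condition $p<\frac N{N+t_1-2s}=\frac N{N-s-\beta}$ holds automatically.

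This is the paper's argument. The paper tests with the cut-off $\mathbb{G}_{s,\Omega}[\rho^{-s-\beta}]\eta_n$ instead of using Tonelli, but that difference is immaterial.
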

\noindent{\bf Proof.}
 For any  $\beta\in(0,\min\{\gamma-s+1,s\})$ and $n\in\N$, denote $\xi_n=\mathbb{G}_{s,\Omega}[\rho^{-s-\beta}]\eta_n$, where $\eta_n$ is given by (\ref{101}),
 we observe that $\xi_n\in C_c^2(\Omega)$ and
\begin{align*}
(-\Delta)^s\xi_n&=   \rho^{-s-\beta}\eta_n+ \mathbb{G}_{s,\Omega}[\rho^{-s-\beta}]\cdot[(-\Delta)^s\eta_n]\\[2mm]
&\quad + \int_ {R^N} \frac{[\mathbb{G}_{s,\Omega}[\rho^{-s-\beta}] (x)-\mathbb{G}_{s,\Omega}[\rho^{-s-\beta}] (y)]\cdot[\eta_n(x)-\eta_n(y)]} {|x-y|^{N+2s}} dx.
\end{align*}
Using Lemma \ref{lm 2.1} with $\tau=s-\beta\in(\max\{2s-\gamma-1,0\},s)$, we have that $\mathbb{G}_{s,\Omega}[\rho^{-s-\beta}]\le c_{56}\rho^{s-\beta}$ in $\Omega$.
Combining with $|(-\Delta)^s\eta_n|\le c_{51}n^{2s}$, it holds that
$$\int_ {\R^N} \frac{[\mathbb{G}_{s,\Omega}[\rho^{-s-\beta}] (x)-\mathbb{G}_{s,\Omega}[\rho^{-s-\beta}] (y)]\cdot[\eta_n(x)-\eta_n(y)]} {|x-y|^{N+2s}} )dx\leq c_{57}.$$ Since  $0<u_\lambda <a \leq c \rho^\gamma$ in $\Omega$, we have that
\begin{align*}
% \nonumber to remove numbering (before each equation)
  \int_\Omega u_\lambda [(-\Delta)^s\xi_n] dx&\le  c\int_\Omega  \rho^{\gamma-s-\beta} dx+c_{56}n^{2s}\int_{A_{1/n}}  \rho^{\gamma-\beta+s}dx
  +  c\, c_{57}\int_\Omega  \rho^{\gamma} dx \le   \bar c_{\beta},
\end{align*}
where $c_{56}, c_{57}>0$ independent on $n$ and $\bar c_\beta>0$ satisfies that $\bar c_\beta\to+\infty$ as $\beta\to\min\{\gamma-s+1,s\}^-$. Thus,
$$\int_\Omega \frac{ \lambda  \mathbb{G}_{s,\Omega}[\rho^{-s-\beta}]}{(a-u_\lambda)^2} dx\le \bar c_\beta$$
and then
\begin{equation}\label{4.6p}
\int_\Omega \frac{ \lambda  \rho^{s-\beta}}{(a-u_\lambda)^2} dx\le \frac{\bar c_\beta}{c_{58}}.
\end{equation}
By Theorem \ref{teo 1},
%we see that the mapping $\lambda\mapsto u_\lambda$ is increasing and uniformly bounded by the function $a$, which is in $L^1(\Omega)$, then
we have that
$$u_\lambda\to u_{\lambda^*}\ \ \  {\rm in}\ \ L^1(\Omega)\quad \ {\rm as}\ \ \lambda\to \lambda^*$$
and then for any $\xi\in C_c^2(\Omega)$, it yields that
\begin{equation}\label{4.6p1}
\int_\Omega u_\lambda[(-\Delta)^s\xi] dx\to \int_\Omega u_{ \lambda^*}[(-\Delta)^s\xi] dx \ \ \ {\rm as} \ \ \lambda\to  \lambda^*.
\end{equation}
Moreover,  the mapping $\lambda\mapsto  \frac{\lambda  }{(a-u_\lambda)^2}$ is increasing and by (\ref{4.6p}), we have that
$$\frac{\lambda}{(a-u_\lambda)^2}\to \frac{\lambda^*  }{(a-u_{\lambda^*})^2}\ \ \  {\rm a.e.\ in }\ \  \Omega\quad {\rm as}\  \ \lambda\to \lambda^*,$$
$$\frac{\lambda}{(a-u_\lambda)^2}\to \frac{\lambda^*  }{(a-u_{\lambda^*})^2}\ \ \  {\rm in}\ \ L^1(\Omega,\,\rho^{s-\beta}dx)\quad {\rm as}\ \ \lambda\to \lambda^*$$
and
\begin{equation}\label{4.6p3}
\int_\Omega \frac{ \lambda^*  \rho^{s-\beta}}{(a-u_\lambda^*)^2} dx\le \frac{\bar c_\beta}{c_{58}},
\end{equation}
thus, for any $\xi\in C_c^2(\Omega)$, we have that
\begin{equation}\label{4.6p2}
\int_\Omega \frac{\lambda \xi }{(a-u_\lambda)^2} dx\to \int_\Omega \frac{\lambda^* \xi }{(a-u_{\lambda^*})^2} dx \ \ \ {\rm as} \ \ \lambda\to  \lambda^*.
\end{equation}
Since $u_\lambda$ is the minimal solution of  (\ref{eq 1.1}), it yields that
$$
\int_\Omega u_\lambda[(-\Delta)^s\xi] dx=\int_\Omega \frac{\lambda \xi}{(a-u_\lambda)^2} dx,\ \ \ \ \ \forall \, \xi\in C_c^2(\Omega),
$$
 passing to the limit as $\lambda\to\lambda^*$, combining with (\ref{4.6p1}) and (\ref{4.6p2}),
then
 $u_{\lambda^*}$ is a weak solution of (\ref{eq 1.1}) with $\lambda=\lambda^*$.

We recall  $G_{s,\Omega}$ the Green kernel of $(-\Delta)^s$ in
$\Omega\times\Omega $ and by $\mathbb{G}_{s,\Omega}[\cdot]$ the Green operator
defined by
\begin{equation*}\label{optimal0}
\mathbb{G}_{s,\Omega}[f](x)=\int_{\Omega}G_{s,\Omega}(x,y)f(y) dy,\qquad\forall f\in
L^1(\Omega,\rho^{s} dx).
\end{equation*}
It is known from \cite[Proposition 2.5]{CV1} that for $t_1\in[0,s]$ and $p\in\big(1,\frac{N}{N+t_1-2s})$, there is $c_{p,t_1}>0$ such that
\begin{equation}\label{einq-1111}
 \|\mathbb{G}_{s,\Omega}[f]\|_{W^{2s-t_2,p}(\Omega)} \leq  c_{p,t_1} \norm{f}_{L^1(\Omega, \, \rho^{t_1}dx)},
\end{equation}
where
 $$t_2=t_1+\frac{N(p-1)}{p}\ \ \ \  {\rm if}\ t_1>0,  \qquad   t_2>\frac{N(p-1)}{p}\ \ \ \  {\rm if}\ t_1=0.$$
Taking $t_1=s-\beta$,  $t_2=s$,  $p=\frac{N}{N-\beta}$ and $f=(a-u_{\lambda^*})^{-2}$ in (\ref{einq-1111}),
we obtain that
$$
\norm{ u_{\lambda^*}}_{W^{s,\frac{N}{N-\beta}}(\Omega)} \le
c_{p,t_1}\norm{(a-u_{\lambda^*})^{-2}}_{L^1(\Omega, \, \rho^{s-\beta}dx)}\le \frac{c_{p,t_1} \, \bar c_\beta}{c_{58}\, \lambda^*}
:= c_\beta,
$$
where the last inequality used (\ref{4.6p3}).
Thus, (\ref{4.0.5}) holds and  (\ref{4.0.6}) is obvious by (\ref{4.6p3}). \qquad$\hfill\Box$\medskip

%\subsection{Stability}

Now we are position to introduce the stability of minimal solution $u_\lambda$ for problem (\ref{eq 1.1}).
By the definition of $\lambda_*$, we observe that for
any $\lambda\in (0,\lambda_*)$, there exists $\theta\in(0,1)$ such that $u_\lambda\le \theta  a$.
Since $\gamma\in(0,\frac23s]$ and $a\ge \kappa \rho^\gamma$ in $\Omega$,  we have that
\begin{equation}\label{5.1}
\frac{1}{(a-u_\lambda)^3}\le \frac{1}{(1-\theta)^3a^3}\le (1-\theta)^{-3}\kappa^{-3}\rho^{-3\gamma}\le   c_{60}\rho^{-2s}\quad{\rm in}\quad \Omega,
\end{equation}
where $c_{60}>0$ depends on $\theta, \kappa, \gamma$.
This enables us to consider the first eigenvalue $\mu_1(\lambda)$ of $(-\Delta)^s-\frac{2\lambda}{(a-u_\lambda)^3}$ in $H_0^s(\Omega)$,
that is,
\begin{align*}
\mu_1(\lambda)=\inf_{\varphi\in H_0^s(\Omega)}\frac{\int_\Omega (|(-\Delta)^\frac s2 \varphi|^2-\frac{2\lambda \varphi^2}{(a-u_\lambda)^3})dx}{\int_\Omega \varphi^2\,dx}
%\\=
%\inf_{\varphi\in H_0^s(\Omega)}\frac{\int_\Omega (\varphi\cdot(-\Delta)^s \varphi-\frac{2\lambda \varphi^2}{(a-u_\lambda)^3})dx}{\int_\Omega \varphi^2\,dx}
\end{align*}
for $\lambda\in(0,\lambda_*)$. Note that  $u_\lambda$ is stable if $\mu_1(\lambda)>0$  and semi-stable if $\mu_1(\lambda)\ge0$.

\begin{lemma}\label{lm 4.1}
Assume that $\lambda\in(0,\lambda_*)$,
$a\in C^\gamma(\Omega)\cap C(\bar\Omega)$ satisfies (\ref{1.1}) and (\ref{a 1.0})
with $c \ge\kappa>0$, $\gamma\in(0,\frac23s]$. Suppose that $u_\lambda$ is the minimal  solution of (\ref{eq 1.1})
and $v_\lambda$ is a super solution of (\ref{eq 1.1}).

If $\mu_1(\lambda)>0$, then
$$u_\lambda\le v_\lambda\quad{\rm in}\quad \Omega.$$

If $\mu_1(\lambda)=0$, then

$$u_\lambda= v_\lambda\quad{\rm in}\quad \Omega.$$
\end{lemma}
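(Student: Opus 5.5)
The plan is the classical Crandall--Rabinowitz / Mignot--Puel comparison argument, adapted to the fractional setting via the energy space $H^s_0(\Omega)$. Set $f(u)=\lambda(a-u)^{-2}$, which is convex and increasing in $u\in[0,a)$. By (\ref{5.1}), $f'(u_\lambda)=2\lambda(a-u_\lambda)^{-3}\le c_{60}\rho^{-2s}$, and the fractional Hardy inequality makes the quadratic form defining $\mu_1(\lambda)$ finite on $H^s_0(\Omega)$. By Lemma \ref{lm 4.4}, $u_\lambda\in H^s_0(\Omega)$.

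\textbf{Case $\mu_1(\lambda)>0$.} First replace $v_\lambda$ by $\tilde v=\min\{u_\lambda,v_\lambda\}$. This is again a super solution: the minimum of two super solutions of a monotone problem is a super solution in the weak (or viscosity) sense for the nonlocal operator. Moreover $0\le \tilde v\le u_\lambda$. Let $w=u_\lambda-\tilde v\ge0$. Since $0\le w\le u_\lambda$, we get $w\in H^s_0(\Omega)$, for instance by writing $w=(u_\lambda-v_\lambda)^+$ and using that positive parts preserve $H^s_0$ once $v_\lambda$ is controlled. Then
$$(-\Delta)^s w\le f(u_\lambda)-f(\tilde v)\le f'(u_\lambda)\,w,$$
where the second inequality uses convexity and $\tilde v\le u_\lambda$. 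Testing with $w$ gives
$$\int_\Omega|(-\Delta)^{\frac s2}w|^2dx-\int_\Omega f'(u_\lambda)w^2dx\le0,$$
so $\mu_1(\lambda)\int_\Omega w^2dx\le0$. Hence $w=0$, so $u_\lambda=\tilde v\le v_\lambda$.

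An alternative route uses the monotone iteration directly. Minimality already gives $u_\lambda\le v_\lambda$ whenever $v_\lambda\ge v_n$ for all $n$. Since $v_\lambda\ge0=v_0$ and $v_\lambda\ge\lambda\mathbb{G}_{s,\Omega}[(a-v_\lambda)^{-2}]$, induction as in the proof of Proposition \ref{pr 2.1} gives $v_n\le v_\lambda$. This actually yields the first claim without using $\mu_1$. I would present this route, noting that it only needs the comparison principle for $\mathbb{G}_{s,\Omega}$ and requires $v_\lambda\le a$ so that the right-hand side is defined.

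\textbf{Case $\mu_1(\lambda)=0$.} By the first case, $z=v_\lambda-u_\lambda\ge0$, and
$$(-\Delta)^s z\ge f(v_\lambda)-f(u_\lambda)\ge f'(u_\lambda)z.$$
Let $\varphi_1>0$ be the first eigenfunction attaining $\mu_1(\lambda)=0$. Existence follows from compactness of $H^s_0\hookrightarrow L^2$ combined with the Hardy bound (\ref{5.1}), and positivity follows from $|\varphi|$ being a minimizer together with the strong maximum principle. Test the inequality for $z$ against $\varphi_1$ and use the symmetry of $(-\Delta)^s$, together with $(-\Delta)^s\varphi_1=f'(u_\lambda)\varphi_1$:
$$0\le\int_\Omega\big[(-\Delta)^s z-f'(u_\lambda)z\big]\varphi_1dx=0\quad\text{after cancellation}.$$
On the other hand, the integrand is bounded below pointwise by
$$\big[f(v_\lambda)-f(u_\lambda)-f'(u_\lambda)z\big]\varphi_1\ge0.$$
Hence $f(v_\lambda)-f(u_\lambda)-f'(u_\lambda)z=0$ a.e. Since $f$ is strictly convex, this forces $z=0$, i.e.\ $u_\lambda=v_\lambda$.

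\textbf{Main obstacle.} The hard part is justifying the test against $\varphi_1$, namely showing $\int_\Omega z\,(-\Delta)^s\varphi_1\,dx=\int_\Omega\varphi_1(-\Delta)^s z\,dx$ for a super solution $z$ that is only known to satisfy $0\le z\le a$. I would handle this with the cutoffs $\eta_n$ from (\ref{101}), as in the proof of Proposition \ref{pr 2.1}. The boundary error terms $n^{2s}\int_{A_{1/n}}z\varphi_1\,dx$ are controlled using $z\le c\rho^\gamma$ and $\varphi_1\le c\rho^{s}$. The latter bound holds because $f'(u_\lambda)\varphi_1\in L^\infty_{loc}$ with the weight bound $\rho^{-3\gamma}\rho^s$, which is integrable against the Green function when $3\gamma\le 2s$. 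These bounds give a contribution of size $O(n^{2s-\gamma-s-1})\to0$, since $\gamma+s+1>2s$.
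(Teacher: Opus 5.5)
Your outline is sound. For $\mu_1(\lambda)=0$ it is the classical convexity argument the paper invokes by citation: test $(-\Delta)^s z\ge f(v_\lambda)-f(u_\lambda)\ge f'(u_\lambda)z$ against the positive ground state $\varphi_1$, then use strict convexity of $t\mapsto\lambda(a-t)^{-2}$.

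For the first claim you take a genuinely different and simpler route. The stability-based proof behind the citation gets $u_\lambda\le v_\lambda$ from $\mu_1(\lambda)>0$, by linearizing via convexity and testing with $(u_\lambda-v_\lambda)^+$. That argument concerns stable solutions and does not use minimality. You instead note that $u_\lambda$ is the minimal solution, so the monotone iteration of Proposition \ref{pr 2.1} gives $u_\lambda\le v_\lambda$ for every super solution $0\le v_\lambda\le a$, with no stability hypothesis. The paper itself uses exactly this in Lemma \ref{lm 2.2}. This makes the first assertion essentially free, and it also supplies $z=v_\lambda-u_\lambda\ge0$ for the case $\mu_1(\lambda)=0$. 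You are right to discard your first route, since it needs $(u_\lambda-v_\lambda)^+\in H^s_0(\Omega)$, which you have not established for a general super solution.

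The step that fails as written is the boundary control of $\varphi_1$ at the endpoint $\gamma=\frac23s$, which the lemma allows. There (\ref{a 1.0}) gives $V:=2\lambda(a-u_\lambda)^{-3}\ge 2\lambda c^{-3}\rho^{-2s}$, so the potential is exactly Hardy-critical. This has two consequences.

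First, Hardy's inequality then makes $\varphi\mapsto\int_\Omega V\varphi^2dx$ bounded but not weakly continuous on $H^s_0(\Omega)$. So compactness of $H^s_0\hookrightarrow L^2$ together with (\ref{5.1}) does not by itself produce a minimizer for $\mu_1(\lambda)$. It does for $\gamma<\frac23s$, where $V\le C\rho^{-3\gamma}$ with $3\gamma<2s$. At the endpoint, attainment remains unjustified; the paper also takes it for granted in Proposition \ref{pr 4.2}.

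Second, the bound $\varphi_1\le c\rho^s$ is false there. Since $V\varphi_1>0$, we have $\varphi_1=\mathbb{G}_{s,\Omega}[V\varphi_1]\ge c'\rho^s$, hence $V\varphi_1\ge c''\rho^{-s}$. Lemma \ref{lm 2.1} with $\tau=s$ then yields $\varphi_1\ge c'''\rho^s\ln\frac1\rho$ near $\partial\Omega$. Your justification breaks precisely at the equality $3\gamma=2s$, because $\mathbb{G}_{s,\Omega}[\rho^{-s}]$ is of order $\rho^s\ln\frac1\rho$, not $\rho^s$.

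The error term you single out can still be handled there without pointwise decay of $\varphi_1$, by Cauchy--Schwarz and Hardy:
\[
n^{2s}\int_{A_{1/n}}z\varphi_1\,dx\le n^{2s}\Big(\int_{A_{1/n}}c^2\rho^{2\gamma}dx\Big)^{\frac12}\Big(n^{-2s}\int_{A_{1/n}}\frac{\varphi_1^2}{\rho^{2s}}dx\Big)^{\frac12}=o\big(n^{s-\gamma-\frac12}\big).
\]
This tends to $0$ for $\gamma=\frac23s$ because $\frac s3<\frac12$. For $\gamma<\frac23s$ your bootstrap through Lemma \ref{lm 2.1} is the right mechanism, with each step gaining $2s-3\gamma$ in the exponent, though it needs a starting bound on $\varphi_1$.
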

\noindent{\bf Proof.} %We observe that $a>u_\lambda$ in $\Omega$ for $\lambda\in(0,\lambda^*)$ and $\mu_1(\lambda)$ is well-defined for $\lambda\in(0,\lambda_*)$, \textcolor {red}
%{then it follows the procedure of the proof of Lemma 4.1 in \cite{GG} just replacing
%$\frac{f}{(1-u)^2}$ by $\frac{1}{(a-u)^2}$ and replacing $-\Delta$ by $(-\Delta)^s$ .}
The procedure of proof is similar as in Lemma 4.1 in \cite{CWZ1} (also see  \cite{GG}).
\hfill$\Box$

\begin{proposition}\label{pr 4.2}
Let $\lambda\in(0,\lambda_*)$,
$a\in C^\gamma(\Omega)\cap C(\bar\Omega)$ satisfying  (\ref{1.1}) and (\ref{a 1.0})
with $c\ge\kappa>0$, $\gamma\in(0,\frac23s]$, $u_\lambda$ be the minimal solution of (\ref{eq 1.1}).

Then $u_\lambda$ is stable.
\end{proposition}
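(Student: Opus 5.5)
The plan is the classical Crandall--Rabinowitz / Ghoussoub--Guo argument: show $\mu_1(\lambda)\ge 0$ by minimality, then exclude $\mu_1(\lambda)=0$ using strict monotonicity of $\lambda\mapsto u_\lambda$.

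\textbf{Step 1: $\mu_1(\lambda)$ is attained.} By (\ref{5.1}), the potential $\frac{2\lambda}{(a-u_\lambda)^3}$ is bounded by $c_{60}\rho^{-2s}$. The fractional Hardy inequality $\int_\Omega \varphi^2\rho^{-2s}dx\le C\int_\Omega|(-\Delta)^{\frac s2}\varphi|^2dx$ holds on $H^s_0(\Omega)$ for $s\neq\frac12$; for $s=\frac12$ one uses the slack $3\gamma\le 2s$ and the fact that the constant can be taken small near the boundary when $3\gamma<2s$. Hence the quadratic form is bounded below and $\mu_1(\lambda)>-\infty$. Moreover, since the potential is $o(\rho^{-2s})$ or at least form-bounded, a minimizing sequence is bounded in $H^s_0$. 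Compactness of the potential term is handled by splitting the region into $A_\delta$, where Hardy gives a small contribution if $3\gamma<2s$, and its complement, where Rellich applies. I expect this compactness to be the main obstacle in the borderline case $\gamma=\frac23 s$. There I would first try the improved bound $u_\lambda\le \theta a$ with the explicit $\theta$ from the definition of $\lambda_*$, and otherwise argue directly with an eigenfunction of the linearized problem. This gives a positive first eigenfunction $\varphi_1$ (positivity via $|\varphi|$ lowering the Gagliardo energy).

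\textbf{Step 2: $\mu_1(\lambda)\ge0$.} Suppose $\mu_1(\lambda)<0$. Set $w_\epsilon=u_\lambda-\epsilon\varphi_1$. By Taylor expansion,
\[
(-\Delta)^s w_\epsilon-\frac{\lambda}{(a-w_\epsilon)^2}=-\epsilon\mu_1\varphi_1+O\!\left(\epsilon^2\frac{\varphi_1^2}{(a-u_\lambda)^4}\right),
\]
which is nonnegative for small $\epsilon$ provided $\varphi_1\le C\rho^{s}$ and $\varphi_1/(a-u_\lambda)$ stays bounded. Both follow from $\gamma<s$ and the boundary regularity of the eigenfunction, proven by comparison with $V_\tau$ from Proposition \ref{pr 2.1.0}. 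So $w_\epsilon$ is a supersolution. Since $w_\epsilon<u_\lambda$, the monotone iteration of Proposition \ref{pr 2.1} started below $w_\epsilon$ yields a solution $\le w_\epsilon<u_\lambda$, contradicting minimality. Alternatively, Lemma \ref{lm 4.1} can be invoked directly once the sign of $\mu_1$ is fixed.

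\textbf{Step 3: exclude $\mu_1(\lambda)=0$.} Pick $\lambda'\in(\lambda,\lambda_*)$. Then $u_{\lambda'}$ satisfies $(-\Delta)^s u_{\lambda'}=\frac{\lambda'}{(a-u_{\lambda'})^2}\ge\frac{\lambda}{(a-u_{\lambda'})^2}$, so it is a supersolution for $\lambda$. If $\mu_1(\lambda)=0$, Lemma \ref{lm 4.1} forces $u_{\lambda'}=u_\lambda$. This contradicts (\ref{e 1.1}), which gives $u_{\lambda'}-u_\lambda\ge(\lambda'-\lambda)\mathbb{G}_{s,\Omega}[a^{-2}]>0$. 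Hence $\mu_1(\lambda)>0$ and $u_\lambda$ is stable.
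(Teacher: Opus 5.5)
Your Step 3 is exactly the paper's closing argument. Step 2, however, fails as written, because the nonlinearity is singular at $\partial\Omega$. Write $D=a-u_\lambda$ and $r=\epsilon\varphi_1/D$. An exact computation gives
\[
(-\Delta)^s w_\epsilon-\frac{\lambda}{(a-w_\epsilon)^2}=\epsilon|\mu_1|\varphi_1-\frac{\lambda}{D^2}\,\frac{r^2(3+2r)}{(1+r)^2}.
\]
There are two cases. Where $r>1$, the right-hand side is negative as soon as $\lambda D^{-3}>2|\mu_1|$. Where $r\le 1$, it is nonnegative only if $\epsilon\varphi_1\le \frac{4|\mu_1|}{5\lambda}D^4$.

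Now $D\le a\le c\rho^\gamma$ by (\ref{a 1.0}). On the other hand, $(-\Delta)^s\varphi_1\ge\mu_1\varphi_1$, so the fractional Hopf lemma gives $\varphi_1\ge c'\rho^s$. Hence $\varphi_1/D^4\gtrsim\rho^{s-4\gamma}\to\infty$ whenever $\gamma>\frac s4$. So for $\gamma\in(\frac s4,\frac23 s]$ no $\epsilon>0$ makes $w_\epsilon$ a supersolution near $\partial\Omega$.

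The condition you checked, boundedness of $\varphi_1/(a-u_\lambda)$, only bounds the remainder by $C\epsilon^2\varphi_1D^{-3}$. That is still not dominated by $\epsilon|\mu_1|\varphi_1$ as $D\to0$. In addition, Step 1 (existence of $\varphi_1$ on all of $\Omega$) is left open in the borderline case $\gamma=\frac23 s$, where the potential is of exact order $\rho^{-2s}$.

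You can repair Step 2, and remove its dependence on Step 1, by localizing. Suppose $\mu_1(\lambda)<0$.
\begin{itemize}
\item Density of $C_c^\infty(\Omega)$ in $H^s_0(\Omega)$ together with Fatou's lemma gives a compactly supported test function with negative Rayleigh quotient.
\item Take $\Omega'\Subset\Omega$ containing its support, and let $\phi>0$ be the first eigenfunction of $(-\Delta)^s-\frac{2\lambda}{(a-u_\lambda)^3}$ in $H^s_0(\Omega')$. It exists because the potential is bounded on $\Omega'$.
\item Then $u_\lambda-\epsilon\phi$ is a positive supersolution for small $\epsilon$. In $\Omega'$, $D$ is bounded below, so the remainder is $O(\epsilon^2\phi^2)$. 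In $\Omega\setminus\overline{\Omega'}$, $(-\Delta)^s\phi<0$ because $\phi\ge0$ vanishes there.
\item The monotone iteration of Proposition \ref{pr 2.1} converges to $u_\lambda$ and stays below $u_\lambda-\epsilon\phi$, which is a contradiction.
\end{itemize}

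Alternatively, follow the paper, which never builds a supersolution below $u_\lambda$. It obtains $\mu_1(\lambda)>0$ for small $\lambda$ directly from Hardy's inequality and (\ref{5.1}). It then uses that $\lambda\mapsto\mu_1(\lambda)$ is decreasing and continuous, so a loss of stability in $(0,\lambda_*)$ would produce a zero $\lambda_0$, which the argument of your Step 3 excludes.

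Finally, Hardy's inequality $\int_\Omega\varphi^2\rho^{-2s}dx\le C\int_{\R^N}|(-\Delta)^{\frac s2}\varphi|^2dx$ holds on $H^s_0(\Omega)$ for every $s\in(0,1)$, since $\int_{\Omega^c}|x-y|^{-N-2s}dy\ge c\rho(x)^{-2s}$. The failure at $s=\frac12$ you have in mind concerns only the regional seminorm over $\Omega\times\Omega$. So your separate treatment of $s=\frac12$ is unnecessary, and it would not work at $3\gamma=2s$ anyway.
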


\noindent{\bf Proof.}
 % We first claim  that the mapping $\lambda\mapsto \mu_1(\lambda)$ is locally Lipschitz continuous and strictly decreasing in $(0,\lambda_*)$.
Observe that the mapping $\lambda\mapsto u_{\lambda}$ is strictly increasing and so is $\lambda\mapsto \frac{2\lambda}{(a-u_\lambda)^3}$,
then the mapping $\lambda\mapsto \mu_1(\lambda)$ is strictly decreasing
in $(0,\lambda_*)$. Let $0<\lambda_1<\lambda_2<\lambda_*$ and $\phi_{\lambda_2}$ be the achieved function of $\mu_1(\lambda_2)$ with $\norm{\phi_{\lambda_2}}_{L^2(\Omega)}=1$, we have that
\begin{align*}
0< \mu_1(\lambda_1)-\mu_1(\lambda_2) &\le  \int_\Omega \left(|(-\Delta)^\frac s2 \phi_{\lambda_2}|^2-\frac{2\lambda_1 \phi_{\lambda_2}^2}{(a-u_{\lambda_1})^3}\right)dx     -\int_\Omega \left(|(-\Delta)^\frac s2 \phi_{\lambda_2}|^2-\frac{2\lambda_2 \phi_{\lambda_2}^2}{(a-u_{\lambda_2})^3}\right)dx
   \\&\le  2(\lambda_2-\lambda_1)\int_\Omega\frac{ \phi_{\lambda_2}^2}{(a-u_{\lambda_2})^3} dx,
\end{align*}
then the mapping $\lambda\mapsto \mu_1(\lambda)$ is locally Lipschitz continuous.

Now we show that $u_\lambda$ is stable and $\mu_1(\lambda)>0$ for $\lambda>0$ small.
In fact, by the inequality (1) in \cite{D} (also see (1.10) in \cite{CS3}), there exists constant $c_{61}>0$ such that
\begin{align*}
\int_\Omega \varphi(x)^2\rho(x)^{-2s}dx \le c_{61}\int_\Omega \int_\Omega \frac{|\varphi(x)-\varphi(u)|^2}{|x-y|^{N+2s}}dxdy
& \le  c_{61}\int_{\R^N} \int_{\R^N} \frac{|\varphi(x)-\varphi(u)|^2}{|x-y|^{N+2s}}dxdy
\\&= c_{61}\int_{\Omega} |(-\Delta)^\frac s2 \varphi(x)|^2 dx,\qquad \forall \varphi\in H^s_0(\Omega),
\end{align*}
combining with  (\ref{5.1}),  we have that for $\lambda<\lambda_*$,
$$\int_{\Omega}\frac{2\lambda \varphi^2}{(a-u_\lambda)^3}dx\le 2\lambda c_{60}\int_\Omega \varphi^2\rho^{-2s}dx
\le \lambda c_{61} \int_{\Omega}|(-\Delta)^\frac s2 \varphi|^2  dx.$$
For   $\lambda>0$ small such that $\lambda c_{61}< 1$, we obtain that
$$\int_{\Omega}\frac{2\lambda \varphi^2}{(a-u_\lambda)^3}dx
<  \int_{\Omega} |(-\Delta)^\frac s2 \varphi|^2 dx, \quad \forall \varphi\in H_0^s(\Omega)\setminus\{0\},$$
that is, $\mu_1(\lambda)>0$ and $u_\lambda$ is  stable for $\lambda>0$ small.

By the fact that $\mu_1(\lambda)>0$ for $\lambda>0$ small and
 the mapping $\lambda\mapsto \mu_1(\lambda)$ is locally Lipschitz continuous, strictly decreasing
in $(0,\lambda_*)$, so if there exists $\lambda_0\in(0,\lambda_*)$ such that
$\mu_1(\lambda_0)=0$, then $\mu_1(\lambda)>0$ for $\lambda\in(0,\lambda_0)$.
Letting $\lambda_1\in (\lambda_0,\lambda_*)$, the minimal solution $u_{\lambda_1}$  is
a super solution of
$$
\left\{\arraycolsep=1pt
\begin{array}{lll}
 (-\Delta)^s   u = \frac{\lambda_0 }{(a-u)^2}\quad  &{\rm in}\quad\ \Omega,
 \\[2mm]
 \phantom{-\Delta \quad \ }
 u=0\quad &{\rm in}\quad   \   \Omega^c
\end{array}
\right.
$$
and it infers from Lemma \ref{lm 4.1} that
$$u_{\lambda_1}=u_{\lambda_0},$$
 which contradicts that the mapping $\lambda\mapsto u_{\lambda}$ is strictly increasing.
 Therefore, $\mu_1(\lambda)>0$ for $\lambda\in(0,\lambda_*)$  and then $u_{\lambda}$ is stable.
 \qquad$\hfill\Box$

\medskip

%\subsection{Particular case that $\gamma=\frac23s$}

%In this subsection,
We next study further on properties of the minimal solution  when $\gamma=\frac23s$.

\begin{lemma}\label{lm 5.1}
Assume that
$a\in C^\gamma(\Omega)\cap C(\bar\Omega)$ satisfies  (\ref{1.1}) and (\ref{a 1.0})
with $c \ge\kappa>0$, $\gamma=\frac23s$. Then $\lambda^*=\lambda_*$.

\end{lemma}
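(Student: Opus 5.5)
The plan is to prove $\lambda_*\ge\lambda^*$; the reverse inequality $\lambda_*\le\lambda^*$ holds by definition. So I fix $\lambda\in(0,\lambda^*)$ and aim to show $\limsup_{x\to\partial\Omega}u_\lambda(x)\rho(x)^{-\gamma}<\kappa$, which is exactly the condition defining $\lambda_*$ in the proof of Proposition \ref{pr 3.1.1}. The special feature of $\gamma=\frac23s$ is that $2s-2\gamma=\frac23 s=\gamma<s$ and $\gamma\neq\frac12 s$. Hence Lemma \ref{lm 2.1} with $\tau=2s-2\gamma$ gives $\mathbb{G}_{s,\Omega}[\rho^{-2\gamma}]\asymp\rho^{\gamma}$, with no logarithm. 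So the linear response to the source has exactly the same boundary order as $a$.

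\textbf{Step 1 (a gap below $a$).} Choose $\lambda'\in(\lambda,\lambda^*)$. By (\ref{e 1.1}), (\ref{a 1.0}) and Lemma \ref{lm 2.1},
$$u_{\lambda'}-u_\lambda\ge(\lambda'-\lambda)\mathbb{G}_{s,\Omega}[a^{-2}]\ge \frac{\lambda'-\lambda}{c^2}\mathbb{G}_{s,\Omega}[\rho^{-2\gamma}]\ge \epsilon\rho^\gamma \quad\text{in }\Omega$$
for some $\epsilon>0$. Since $u_{\lambda'}\le a\le c\rho^\gamma$, this gives
$$a-u_\lambda\ge\epsilon\rho^\gamma \qquad\text{and}\qquad u_\lambda\le\theta a \ \text{ with } \theta=1-\epsilon/c<1.$$
Lemma \ref{lm 2.2} then shows that $u_\lambda\rho^{-\gamma}$ is bounded.

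\textbf{Step 2 (identify the boundary profile).} I would refine Step 1 to the sharp constant using the precise asymptotics behind Proposition \ref{pr 2.1.0}$(i)$. The proof there gives
$$(-\Delta)^sV_\gamma=c_{1,s}\psi_s(\gamma)\rho^{\gamma-2s}(1+o(1)) \quad\text{as }\rho\to0.$$
Set $\ell(\bar x)=\limsup_{x\to\bar x}u_\lambda\rho^{-\gamma}$ and $A(\bar x)=\liminf_{x\to\bar x}a\rho^{-\gamma}\ge\kappa$. I would compare $u_\lambda$ in a thin strip $A_\delta$ with barriers $\ell_1V_\gamma+M\,\mathbb{G}_{s,\Omega}[1]$, where the second term absorbs the exterior data $u_\lambda$ on $\Omega\setminus A_\delta$. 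Using $a-u_\lambda\ge\epsilon\rho^\gamma$, this should yield the algebraic boundary relation
$$c_{1,s}\psi_s(\gamma)\,\ell\,(A-\ell)^2\le\lambda,$$
together with the fact that $\ell$ is at most the smallest root of this relation. The reason is that $u_\lambda$ is minimal, so it is reached by the monotone iteration $v_n$ starting from $0$. Propagating the boundary coefficient through the iteration $\ell_{n+1}=\lambda/(c_{1,s}\psi_s(\gamma)(A-\ell_n)^2)$ keeps $\ell_n$ below the first fixed point. That fixed point is at most $A/3$, the maximizer of $\ell(A-\ell)^2$.

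\textbf{Step 3 (conclude, and the main obstacle).} From $\ell\le A/3$ one would want to conclude $\ell<\kappa$ uniformly over $\partial\Omega$. This is the step I expect to be hardest, for two reasons.
\begin{itemize}
\item Step 2 bounds $\ell$ by $A/3$ with $A$ the local boundary ratio of $a$, which may be as large as $c$ rather than $\kappa$.
\item The nonlocal comparison mixes different boundary points, so $A$ is not really a pointwise quantity.
\end{itemize}
I would first try to make the bound uniform by working with $A_{\min}=\liminf_{x\to\partial\Omega}a\rho^{-\gamma}\ge\kappa$. The gap from Step 1 gives $\ell<A$ everywhere, and the iteration bound gives strictly the smaller root. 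If a direct pointwise estimate fails, I would use strict monotonicity in $\lambda$ to perturb: the set of $\lambda$ where the limsup condition holds is open. A continuation argument would then show it is closed in $(0,\lambda^*)$; the argument is that if it failed at a first $\lambda_0<\lambda^*$, the Step 1 gap at $\lambda_0$ relative to $u_{\lambda'}$ would contradict the equality case $\limsup u_{\lambda_0}\rho^{-\gamma}=\kappa$. This would give $\lambda_*=\lambda^*$.
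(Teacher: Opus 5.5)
Your Step 1 is exactly the paper's proof. The paper applies (\ref{e 1.1}) with the larger parameter sent to $\lambda^*$ and uses $u_{\lambda^*}\le a$; your fixed $\lambda'\in(\lambda,\lambda^*)$ is slightly cleaner. It then uses $2s-2\gamma=\gamma$ in the Green estimate for $\mathbb{G}_{s,\Omega}[\rho^{\tau-2s}]$ to get $a-u_\lambda\ge\epsilon\rho^\gamma\ge(\epsilon/c)a$, hence $u_\lambda\le\theta a$ with $\theta<1$ for every $\lambda<\lambda^*$. It stops there, citing Lemma \ref{lm 2.2} and ``the definition of $\lambda_*$''. So the paper in effect treats $\lambda_*$ as the threshold below which $u_\lambda\le\theta a$ for some $\theta\in(0,1)$. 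That is exactly how the definition is invoked at the start of Section 5 to get (\ref{5.1}), and it is all Lemma \ref{lm 2.2} needs to give $u_\lambda\le C\varrho_{2s-2\gamma}$. You should end at Step 1 and state this reading explicitly.

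Steps 2--3 aim at the literal condition $\limsup_{x\to\partial\Omega}u_\lambda\rho^{-\gamma}<\kappa$ from the proof of Proposition \ref{pr 3.1.1}, and they are a genuine gap. One problem is that the relation $c_{1,s}\psi_s(\gamma)\ell(A-\ell)^2\le\lambda$ and the coefficient iteration are pointwise heuristics for a nonlocal barrier comparison that you never carry out. The more serious problem is that the literal target is false in general.

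Neither $\lambda^*$ nor $u_\lambda$ depends on the choice of $\kappa$. On the other hand, the Green estimate gives, near $\partial\Omega$,
$u_\lambda\ge\lambda\mathbb{G}_{s,\Omega}[a^{-2}]\ge\lambda c^{-2}\mathbb{G}_{s,\Omega}[\rho^{-2\gamma}]\ge c'\lambda\rho^{\gamma}$,
with $c'$ depending only on $c$, $s$, $\Omega$. Hence $\limsup u_\lambda\rho^{-\gamma}<\kappa$ forces $\lambda<\kappa/c'$. Replacing $\kappa$ by a smaller constant keeps (\ref{1.1}) and $c\ge\kappa$ valid, so the literally defined $\lambda_*$ can be driven to $0$ while $\lambda^*$ stays fixed.

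This explains two failures in your argument. First, your bound $\ell\le A/3$, with $A$ possibly as large as $c$, cannot be improved to $\ell<\kappa$. Second, your continuation step produces no contradiction. The Step 1 gap only says $u_{\lambda_0}\rho^{-\gamma}\le a\rho^{-\gamma}-\epsilon$, which is compatible with $\limsup u_{\lambda_0}\rho^{-\gamma}=\kappa$ whenever $a\rho^{-\gamma}>\kappa+\epsilon$ near the relevant boundary points. Openness of the set of admissible $\lambda$ was also never justified.
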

\noindent{\bf Proof.} For $\lambda\in(0,\lambda^*)$,
by (\ref{e 1.1}) and Lemma \ref{lm 2.1}, we have that
 \begin{align*}
  a-u_{\lambda}& \ge (\lambda^*-\lambda)\mathbb{G}_{s,\Omega}[a^{-2}]\\&\ge c_{62}(\lambda^*-\lambda)\rho^{\gamma}
    \ge  c_{63}(\lambda^*-\lambda)a,
 \end{align*}
 then there exists $\theta\in(0,1)$ such that $u_\lambda\le \theta a$ in $\Omega$. It follows by Lemma \ref{lm 2.2} and the definition of $\lambda_*$ that $\lambda_*=\lambda^*$.
\qquad$\hfill\Box$

\begin{proposition}\label{pr 4.3}

Assume that
$a\in C^\gamma(\Omega)\cap C(\bar\Omega)$ satisfies  (\ref{1.1}) and (\ref{a 1.0})
with $c \ge\kappa>0$,  $\gamma=\frac23s$ and $ u_{\lambda^*}$ is given by (\ref{4.2}).
Then  $ u_{\lambda^*}$ is a semi-stable weak solution of (\ref{eq 1.1}) with $\lambda=\lambda^*$.

\end{proposition}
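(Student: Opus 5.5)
The weak-solution part is already available from Proposition \ref{pr 4.1}, which applies for every $\gamma\in(0,\frac23s]$. So $u_{\lambda^*}$ is a weak solution of (\ref{eq 1.1}) with $\lambda=\lambda^*$, and only semi-stability remains. The plan is to pass to the limit $\lambda\to\lambda^*$ in the stability inequality of Proposition \ref{pr 4.2}. By Lemma \ref{lm 5.1}, $\lambda_*=\lambda^*$ when $\gamma=\frac23s$, so $u_\lambda$ is stable for every $\lambda\in(0,\lambda^*)$:
$$
\int_\Omega |(-\Delta)^{\frac s2}\varphi|^2dx>\int_\Omega \frac{2\lambda\varphi^2}{(a-u_\lambda)^3}dx,\qquad \forall\,\varphi\in H^s_0(\Omega)\setminus\{0\},\ \lambda\in(0,\lambda^*).
$$

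Fix $\varphi\in H^s_0(\Omega)$ and take $\lambda_n\uparrow\lambda^*$. By Theorem \ref{teo 1}$(i)$ the map $\lambda\mapsto u_\lambda$ is increasing. Hence $u_{\lambda_n}\uparrow u_{\lambda^*}$ pointwise, and $a-u_{\lambda_n}\downarrow a-u_{\lambda^*}\ge0$. The integrands $\frac{2\lambda_n\varphi^2}{(a-u_{\lambda_n})^3}$ are therefore nonnegative and nondecreasing in $n$, and they converge a.e.\ to $\frac{2\lambda^*\varphi^2}{(a-u_{\lambda^*})^3}$. On the set where $u_{\lambda^*}=a$ the limit is interpreted as $+\infty$ whenever $\varphi\neq0$. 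The monotone convergence theorem then gives
$$
\int_\Omega \frac{2\lambda^*\varphi^2}{(a-u_{\lambda^*})^3}dx=\lim_{n\to\infty}\int_\Omega \frac{2\lambda_n\varphi^2}{(a-u_{\lambda_n})^3}dx\le \int_\Omega |(-\Delta)^{\frac s2}\varphi|^2dx .
$$
This is exactly the semi-stability inequality in Definition \ref{def 1}. As a by-product, the right-hand integral is finite, so $\{u_{\lambda^*}=a\}\cap\{\varphi\ne0\}$ has measure zero for every $\varphi\in H^s_0(\Omega)$. Consequently $u_{\lambda^*}<a$ a.e., which is consistent with the weak formulation.

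The main point to check carefully is the justification of the monotone limit, specifically that $u_\lambda\le u_{\lambda^*}\le a$ and that $u_{\lambda^*}$ is the pointwise limit defined in (\ref{4.2}). Both follow directly from the monotonicity in Theorem \ref{teo 1} and the bound $u_\lambda\le a$. No uniform-in-$\lambda$ Hardy bound such as (\ref{5.1}) is needed, since the stability inequality at each $\lambda<\lambda^*$ already supplies the uniform bound by $\int_\Omega|(-\Delta)^{\frac s2}\varphi|^2dx$. Strict inequality is not claimed at $\lambda^*$, because it can be lost in the limit; this is why the conclusion is only semi-stability.
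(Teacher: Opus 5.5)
Your proof is correct and follows the paper's route: you combine Lemma \ref{lm 5.1} with Proposition \ref{pr 4.2} to get stability of $u_\lambda$ for all $\lambda<\lambda^*$, then let $\lambda\to\lambda^*$ in the stability inequality using the monotonicity of $\lambda\mapsto u_\lambda$. The only difference is technical. You apply monotone convergence directly for each $\varphi\in H^s_0(\Omega)$, whereas the paper first tests with $\mathbb{G}_{s,\Omega}[1]$ to get a uniform weighted $L^1$ bound on $(a-u_\lambda)^{-3}$, passes to the limit only for $\varphi\in C_c^2(\Omega)$, and then extends by density (which tacitly needs Fatou), so your version is the cleaner of the two.
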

\noindent{\bf Proof.} When  $\gamma=\frac23s$,
by Lemma \ref{lm 5.1} and Proposition \ref{pr 4.2}, we have that  $u_\lambda$ is stable for $\lambda\in(0,\lambda^*)$, then
$$
\int_\Omega\frac{2\lambda\varphi^2}{(a-u_\lambda)^3}dx<\int_\Omega \varphi\cdot(-\Delta)^s \varphi dx,\quad \ \forall \varphi\in H^s_0(\Omega)\setminus\{0\},
$$
let $\varphi=\mathbb{G}_{s,\Omega}[1]$, we have that
$$\int_\Omega\frac{ \rho^2}{(a-u_\lambda)^3}dx<c_{64}\lambda^{-1}.$$
Since the mapping $\lambda\mapsto \frac{ \rho^2}{(a-u_\lambda)^3}$ is strictly increasing
and bounded in $L^1(\Omega)$,
then
$$\frac{ \rho^2}{(a-u_\lambda)^3}\to \frac{ \rho^2}{(a-u_{\lambda^*})^3}\quad{\rm as}\quad \lambda\to\lambda^*\quad{\rm in}\quad L^1(\Omega)$$
and for any $\varphi\in C_c^2(\Omega)$,
$$\lim_{\lambda\to \lambda^*}\int_\Omega\frac{\lambda\varphi^2}{(a-u_\lambda)^3}dx=\int_\Omega\frac{\lambda^*\varphi^2}{(a-u_{\lambda^*})^3}dx.$$
Therefore,
$$\int_\Omega\frac{2\lambda^*\varphi^2}{(a-u_{\lambda^*})^3}dx\le \int_\Omega \varphi\cdot(-\Delta)^s \varphi dx,\quad \forall \varphi\in C_c^2(\Omega),$$
by the fact that $C_c^2(\Omega)$ is dense in $H^s_0(\Omega)$,
then $u_{\lambda^*}$ is semi-stable.
\qquad$\hfill\Box$

\smallskip

\begin{lemma}\label{lm 4.2}
Let $\lambda\in(0,\lambda^*)$ and $a(x)=\kappa(1-|x|^2)^{\frac23s}$
with $\kappa>0$. Assume that $u$ is a weak solution of (\ref{eq 1.1}) in $B_1(0)$ satisfying for any compact set $K\subset B_1(0)$, there exists
$\hat c_1>0$ such that
 \begin{equation}\label{4.1.3}
 \norm{\frac1{a-u}}_{L^{\frac{3N}{2s}}(K)}\le \hat c_1.
 \end{equation}
Then there exists $\check c_1>0$ depending on $K$ such that
\begin{equation}\label{4.2.1}
 \inf_{x\in K}(a(x)-u(x))>\check c_1.
\end{equation}
\end{lemma}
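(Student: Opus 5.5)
This lemma is the fractional analogue of the classical statement that an $L^{3N/2}$ bound on $(1-u)^{-1}$ keeps $u$ away from the singular value. It should say that on any compact $K\subset B_1(0)$ the touching set $\{u=a\}$ is empty and $a-u$ is bounded below. I would argue locally and use a Moser-type iteration (or a De Giorgi estimate) for the positive function $w:=a-u$, viewed as a supersolution of a linear equation with a potential in $L^{N/(2s)+\epsilon}$.

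\textbf{Step 1 (equation for $w$).} Since $a(x)=\kappa(1-|x|^2)^{\frac23 s}$ is smooth inside $B_1(0)$, $(-\Delta)^s a$ is locally bounded. It equals $a$ near the boundary times a locally bounded function, but only the interior matters here. In the weak sense,
\begin{equation*}
(-\Delta)^s w = (-\Delta)^s a - \frac{\lambda}{w^2} \quad \text{in } B_1(0).
\end{equation*}
Set $V:=\lambda w^{-3}$. Then $\lambda w^{-2}=V w$, so $w$ satisfies
\begin{equation*}
(-\Delta)^s w + V w = (-\Delta)^s a =: g, \qquad |g|\le C_K \ \text{on a neighborhood } K'\supset K,
\end{equation*}
with $w\ge 0$ and $V\ge 0$. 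Hypothesis (\ref{4.1.3}) says exactly that $V\in L^{\frac{N}{2s}}(K')$, since $w^{-3}\in L^{N/(2s)}$ if and only if $w^{-1}\in L^{3N/(2s)}$.

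\textbf{Step 2 (working with $1/w$).} Lower bounds for $w$ are equivalent to upper bounds for $z:=w^{-1}$. Let $\phi$ be convex with $\phi''\ge 0$. For the fractional operator the pointwise convexity inequality $(-\Delta)^s\phi(w)\le \phi'(w)(-\Delta)^s w$ holds. Applying it with $\phi(t)=t^{-1}$ gives, weakly,
\begin{equation*}
(-\Delta)^s z \le -w^{-2}\big(g - Vw\big)= \lambda w^{-4} - g w^{-2} = \lambda z\cdot z^{3} - g z^2 .
\end{equation*}
Writing this as $(-\Delta)^s z\le (\lambda z^3+|g|z)z$, the coefficient $\lambda z^3+|g|z$ lies in $L^{N/(2s)}$ by (\ref{4.1.3}), since $z^3\in L^{N/(2s)}$. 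For a local De Giorgi/Moser bound with a small-norm potential one needs slightly better, namely $L^{q}$ with $q>N/(2s)$, or smallness of the $L^{N/(2s)}$ norm on small balls. Smallness holds by absolute continuity of the integral, because $z^{3N/(2s)}\in L^1(K')$. So on balls $B_r(x_0)\subset K'$ with $r$ small, $\|\lambda z^3\|_{L^{N/(2s)}(B_r)}$ can be made below the threshold of the fractional Sobolev constant. The standard absorption argument, testing with $\eta^2 z^{\beta}$ and using the fractional Sobolev inequality, then gives $z\in L^{p}_{loc}$ for every $p<\infty$. After that the coefficient lies in $L^{q}$ for some $q>N/(2s)$, and a further Moser iteration yields $\sup_{B_{r/2}} z\le C$.

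\textbf{Step 3 (covering and the tail).} The nonlocal iteration produces tail terms $\int_{\R^N\setminus B_r}|z|\,|y|^{-N-2s}$, and $z$ is unbounded near $\partial B_1$. I would avoid this by running the argument on $w$ itself, using a weak Harnack inequality for nonnegative supersolutions of $(-\Delta)^s w+Vw\ge -C$. There the tail of $w^-=0$ is harmless because $w\ge 0$ globally, and $V\in L^{N/(2s)}$ with small norm on small balls. Combined with $w\in L^{-3N/(2s)}$, i.e. $\int w^{-p}<\infty$, the inverse-power version of weak Harnack (a crossover lemma) gives $\inf_{B_{r/2}} w\ge c>0$. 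Covering $K$ by finitely many such balls gives (\ref{4.2.1}).

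I expect the main obstacle to be justifying the Moser/Harnack machinery for this nonlocal operator when the potential is only critically integrable, together with handling the nonlocal tail. I would address this with the smallness-on-small-balls argument together with the global nonnegativity of $w$.
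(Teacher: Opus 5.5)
\textbf{There is a genuine gap, in your Step 3.} That step carries the whole argument, and it rests on a linear estimate that is false at this level of integrability. A weak Harnack (or inverse-power Moser) bound for nonnegative supersolutions of $(-\Delta)^s w+Vw\ge -C$ does not give $\inf w>0$ when $V$ is only in $L^{\frac{N}{2s}}$. This remains true even with small norm on small balls, and even when combined with $\int w^{-p}<\infty$.

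A counterexample exists already for $s=1$. Take $w=\exp\big(-(\ln\frac1{|x|})^{\beta}\big)$ with $\beta>0$ small. It satisfies $-\Delta w+Vw=0$ near $0$ with $0\le V=\Delta w/w\in L^{N/2}$. It has $w^{-1}\in L^p$ for every $p<\infty$, yet it vanishes at the origin.

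You can see the mechanism in your own iteration. Testing with $\eta^2w^{-q-1}$ produces the term $\lambda\int\eta^2w^{-q-3}$. Absorbing it requires, up to constants, $q\,\|w^{-3}\|_{L^{N/(2s)}(B_r)}$ to be small. So the admissible radius shrinks as $q\to\infty$, and no fixed smallness closes the iteration to $L^\infty$. The crossover lemma does not help either, since negative-power integrability is already given. The missing step is exactly the passage from $L^p$ to $L^\infty$ for $w^{-1}$.

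What would rescue your route is the Brezis--Kato bootstrap from Step 2. It uses the specific coupling $V=\lambda w^{-3}$: better integrability of $w^{-1}$ upgrades $V$ to $L^q$ with $q>\frac N{2s}$. But you formulate it only for $z=1/w$, where it cannot run because $z=+\infty$ outside $B_1(0)$. Then you drop it when you switch to $w$. To close the argument you would have to redo the bootstrap for negative powers of $w$. That requires truncations, the nonlocal algebraic inequalities, and a proof that the distributional solution lies in $H^s_{loc}$, so that such test functions are admissible.

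\textbf{The paper avoids all of this by not rewriting $\lambda w^{-2}$ as $Vw$.} As a source term, $\lambda(a-u)^{-2}\in L^{\frac{3N}{4s}}_{loc}$ is supercritical, since $\frac{3N}{4s}>\frac N{2s}$. Hence $u\in W^{2s,\frac{3N}{4s}}_{loc}\subset C^{\frac23 s}_{loc}$.

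Suppose $u(x_0)=a(x_0)$ at some point. Since $a$ is smooth inside the ball, $a-u\le C|x-x_0|^{\frac23 s}$ near $x_0$. Then $(a-u)^{-\frac{3N}{2s}}\ge c|x-x_0|^{-N}$, which is not integrable and contradicts (\ref{4.1.3}). Continuity of $a-u$ and compactness of $K$ then give the positive infimum.

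The exponent $\frac{3N}{2s}$ is exactly the one that makes this borderline argument work. Your potential formulation turns that supercritical datum into a critical potential, and that is where the difficulty you anticipated comes from.
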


\noindent{\bf Proof.}  By (\ref{4.1.3}), we have that
$$\frac1{(a-u)^2}\in L^{\frac{3N}{4s}}(K),$$
then $u\in W^{2s,\frac{3N}{4s}}(K)$.
By \cite[Theorem 8.2]{NPV}, we
deduce that $u\in C^{\frac23s}(K')$ with $K'$  compact set in interior point set of $K$.

If there exists $x_0\in B_1(0)$ such that $u(x_0)=a(x_0)$.
For compact set $K\subset B_1(0)$ containing $x_0$,
  we have that
\begin{align*}
 |a(x)-u(x)|&\le  |a(x)-a(x_0)|+|u(x)-u(x_0)|+|u(x_0)-a(x_0)|\\ &=
|a(x)-a(x_0)|+|u(x)-u(x_0)|\le \hat c_1|x-x_0|^{\frac23s},
\end{align*}
then
$$\int_{K}\frac1{(a-u)^{\frac{3N}{2s}}}dx\ge \check c_1 \int_{K} |x-x_0|^{-\frac{3N}{2s}\cdot\frac{2s}3}dx=+\infty,$$
which  contradicts (\ref{4.1.3}). Therefore, (\ref{4.2.1}) holds.
 \qquad$\hfill\Box$
0

\begin{proposition}\label{pr 4.4}
Let $\Omega=B_1(0)$,  $1\le N\le \frac{14}{3}s$,
 the function
$a$ satisfy (\ref{a 1.1})
with $\kappa>0$ and $\gamma=\frac23s$, $ u_{\lambda^*}$ is given by (\ref{4.2}).
Then  $u_{\lambda^*}$ is a classical solution of (\ref{eq 1.1}) with $\lambda=\lambda^*$.

\end{proposition}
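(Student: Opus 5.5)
The plan is to derive the local bound (\ref{4.1.3}) for $u_{\lambda^*}$ from the semi-stability in Proposition \ref{pr 4.3}, and then apply Lemma \ref{lm 4.2}. That lemma shows $a-u_{\lambda^*}$ is bounded away from zero on compact sets, so the right-hand side $\lambda^*(a-u_{\lambda^*})^{-2}$ is locally bounded, and even locally H\"older since $a\in C^{0,\beta}_{\rm loc}$. Interior regularity for $(-\Delta)^s$ then makes $u_{\lambda^*}$ a classical solution in the interior. Near the boundary no singularity can form: $u_{\lambda^*}\le a$, and the equation and the boundary decay are already controlled by the weak formulation and Proposition \ref{pr 4.1}.

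The main step is an integrability estimate of Crandall--Rabinowitz type, uniform in $\lambda<\lambda^*$, for $f_\lambda:=(a-u_\lambda)^{-1}$. Fix $\lambda<\lambda^*$. By Lemma \ref{lm 5.1} we have $\lambda_*=\lambda^*$, and by Proposition \ref{pr 4.2} $u_\lambda$ is stable; the weights $(a-u_\lambda)^{-3}\le c\rho^{-2s}$ are admissible by (\ref{5.1}).

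1. Test the stability inequality with $\varphi=\big((a-u_\lambda)^{-j}-a^{-j}\big)\eta$ for a cutoff $\eta\in C_c^2(\Omega)$ and an exponent $j>0$. The subtraction of $a^{-j}$ is needed because $a$ is not constant, and the cutoff makes $\varphi\in H^s_0$.
2. Test the equation with $\psi=\big((a-u_\lambda)^{-2j-1}-a^{-2j-1}\big)\eta^2$.
3. To compare the two quadratic forms, use the convexity inequality $\big(F(x)-F(y)\big)^2\le \big(x-y\big)\big(G(x)-G(y)\big)$ for suitable $F,G$ on the Gagliardo form. It gives, up to harmless terms coming from $a$ and $\eta$,
$$2\lambda\int \frac{\eta^2}{(a-u)^{2j+3}}\le \frac{j^2}{2j+1}\,\lambda\int\frac{\eta^2}{(a-u)^{2j+3}}+C.$$
4. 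This closes whenever $2>\frac{j^2}{2j+1}$, i.e. $j<2+\sqrt 6$. It yields $(a-u_\lambda)^{-1}\in L^{2j+3}_{\rm loc}$ uniformly in $\lambda$, so $p=2j+3$ can be taken up to $7+2\sqrt6$ (below it). For comparison, in the classical case ($s=1$) the closing condition is $j^2<2(2j+1)$.
5. Pass $\lambda\to\lambda^*$ by monotone convergence.
6. Condition (\ref{4.1.3}) requires $p\ge \frac{3N}{2s}$. With $N\le\frac{14}{3}s$ we get $\frac{3N}{2s}\le 7<7+2\sqrt6$, so (\ref{4.1.3}) holds for $u_{\lambda^*}$.

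The hard part is step 3, the nonlocal commutator/error terms. The cutoff $\eta$ and the non-constant $a$ produce cross terms in the Gagliardo form of the type $\iint (\eta(x)-\eta(y))^2\ldots$. These must be bounded using only the global weighted bound (\ref{4.0.6}) and $u\le a\le c\rho^{2s/3}$, which controls $\rho^{s-\beta}(a-u)^{-2}$ in $L^1$. To avoid them I would instead exploit radial symmetry on $B_1(0)$: $u_\lambda$ is radial and, by moving planes or by monotone iteration from radially decreasing data, decreasing in $|x|$. That makes it enough to obtain the bound in an annulus $\{\frac12<|x|<r\}$ near the sphere, together with a bound near the origin, and a single global test function times $\mathbb{G}_{s,\Omega}[1]^{m}$ would serve in place of the cutoff.

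Once (\ref{4.2.1}) holds for every compact $K$, the proof finishes by bootstrapping $u_{\lambda^*}\in C^{2s+\alpha}_{\rm loc}$, identifying the weak solution of Definition \ref{def 1} with a pointwise one, and checking $0<u_{\lambda^*}<a$ in $B_1(0)$.
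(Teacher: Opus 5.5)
\textbf{Verdict: genuine gap.} Your route is the same as the paper's, but the central estimate, step~3, is not established, and your fallback for it does not work.

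\textbf{What matches, and where you improve on the paper.} Like the paper, you get a uniform-in-$\lambda$ bound for $w_\lambda^{-1}$ on compact sets, where $w_\lambda:=a-u_\lambda$, by testing stability and the equation with powers of $w_\lambda$, and then apply Lemma~\ref{lm 4.2}. Your inequality $(F(x)-F(y))^2\le (x-y)(G(x)-G(y))$, $G'=(F')^2$, is sharper than what the paper uses. The paper uses only the pointwise bound $(-\Delta)^s w_\lambda^\theta\le\theta w_\lambda^{\theta-1}(-\Delta)^s w_\lambda$. That gives the factor $-\theta=j$ instead of $\frac{j^2}{2j+1}$, hence only $\theta\in(-2,0)$ and exponents $3-2\theta<7$. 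At the endpoint $N=\frac{14}{3}s$ the paper therefore misses $\frac{3N}{2s}=7$, while your range covers it.

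\textbf{Why step~3 fails as written.} You call the cutoff terms ``harmless'' and then concede they are the hard part. They are not harmless. On the equation side you get the commutator
\[
\iint \big(w_\lambda(x)-w_\lambda(y)\big)\big(\eta(x)-\eta(y)\big)\,w_\lambda(x)^{-2j-1}\eta(x)\,\frac{dx\,dy}{|x-y|^{N+2s}}.
\]
Unlike the local case, this cannot simply be integrated by parts onto $\eta$. You also cannot bound it through regularity of $w_\lambda$, since uniform regularity is exactly what is being proved. Absorbing it while keeping a constant near $\frac{j^2}{2j+1}$ would need a genuine nonlocal Caccioppoli argument, which the proposal does not contain.

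There is a second constraint. The test functions must not couple to a neighborhood of $\partial B_1$ through negative powers of $w_\lambda$. There $w_\lambda^{-1}$ blows up with no uniform control as $\lambda\to\lambda^*$: (\ref{4.0.6}) and its uniform version for $u_\lambda$ only control $\rho^{s-\beta}(a-u)^{-2}$ in $L^1$.

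\textbf{Why the fallback does not fix it.} Radial monotonicity of $u_\lambda$ is unjustified. The nonlinearity $\lambda(a-u)^{-2}$ depends on $|x|$ through $a$, and $a^{-2}$ is radially \emph{increasing*}. So neither moving planes nor an iteration started from radially decreasing data is available. Multiplying by $\mathbb{G}_{s,\Omega}[1]^m$ just reproduces the same product-rule commutators, now extending up to $\partial B_1$.

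\textbf{The missing idea: the paper's localization, which uses radial symmetry alone.}
\begin{enumerate}
\item By (\ref{4.0.6}), choose $r_n\to1$ with $a(r_n)-u_{\lambda^*}(r_n)>0$.
\item Set $\epsilon_\lambda=a(r_n)-u_\lambda(r_n)$. By monotonicity of $\lambda\mapsto u_\lambda$, $\epsilon_\lambda\ge\varepsilon_n:=a(r_n)-u_{\lambda^*}(r_n)>0$ uniformly in $\lambda$.
\item Test stability with $\big(w_\lambda^\theta-\epsilon_\lambda^\theta\big)\chi_{B_{r_n}}$ and the equation with $\big(w_\lambda^{2\theta-1}-\epsilon_\lambda^{2\theta-1}\big)\chi_{B_{r_n}}$.
\end{enumerate}
Since $w_\lambda$ is constant on $\partial B_{r_n}$, these test functions vanish there with no cutoff function. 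The only extra terms are the constants $\epsilon_\lambda^{\theta}$ and $\epsilon_\lambda^{2\theta-1}$ times lower powers of $w_\lambda^{-1}$, together with $|(-\Delta)^s a|\le C_{n,s}$ on $B_{r_n}$. H\"older absorbs all of these uniformly in $\lambda$. You can run your sharper inequality on $B_{r_n}\times B_{r_n}$ inside this scheme.

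\textbf{Caveat about the paper's own argument.} The paper evaluates $\int\varphi_\theta(-\Delta)^s\varphi_\theta$ as if the operator were local. A rigorous version must still estimate the pairs $(x,y)\in B_{r_n}\times(\R^N\setminus B_{r_n})$. These involve the test function only on $B_{r_n}$, and $w_\lambda$ outside only through its bounded values, so the blow-up at $\partial B_1$ does not enter.
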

\noindent{\bf Proof.} Since the mapping $\lambda\mapsto u_\lambda$ is strictly increasing and bounded by $a$,
then from (\ref{a 1.0}) and Lemma \ref{lm 4.2},
we only have to improve the regularity of $ u_{\lambda^*}$ in any compact set of $B_1(0)$.
For $\lambda\in(0,\lambda^*)$, we know that $u_\lambda$ is stable, i.e.
\begin{equation}\label{4.3.1}
\int_{B_1(0)}\frac{2\lambda \varphi^2}{(a-u_\lambda)^3}dx< \int_{\Omega} \varphi\cdot(-\Delta)^s \varphi  dx, \quad \forall \varphi\in H^s_0(B_1(0))\setminus\{0\}.
\end{equation}

Now we claim that the minimal solutions $u_{\lambda}$ is radially symmetric for $\lambda\in(0,\lambda^*]$. Indeed,
the minimal solution $u_\lambda$ could be approximated
by the sequence of functions
$$v_n=\lambda\mathbb{G}_{s,B_1(0)}[\frac1{(a-v_{n-1})^2}]\ \ \ {\rm with} \ \  v_0=0.$$
It follows by radially symmetry of $v_{n-1}$ and $a$ that $v_n$ is radially symmetry and then
$u_\lambda$ is radially symmetric. Then $u_{\lambda^*}$ is radially symmetric.

By (\ref{4.0.6}),   there exists a  sequence $\{r_n\}_n\subset(0,1)$ such that
$$\lim_{n\to+\infty} r_n=1\quad{\rm and}\quad  a(r_n)-u_{\lambda^*}(r_n)>0.$$
%Otherwise, there exists $r\in(0,1)$ such that
%$$a-u_{\lambda^*}=0\quad{\rm a.e.\ in}\ \ B_1(0)\setminus{B_r(0)},$$
%which contradicts (\ref{4.0.6}).
Let us denote
$$
\varphi_\theta= \left\{\arraycolsep=1pt
\begin{array}{lll}
(a-u_\lambda)^\theta-\epsilon_\lambda^\theta\quad  &{\rm in}\quad B_{r_n}(0),
 \\[1.5mm]
 \phantom{}
0\quad  &{\rm in}\quad \mathbb{R}^N\setminus B_{r_n}(0),
\end{array}
\right.
$$
for $\theta\in(-2,0)$ and $\epsilon_\lambda= a(r_n)-u_\lambda(r_n)$. We observe that $\varphi_\theta\in H_0^s(B_1(0))$.
It follows by (\ref{4.3.1}) with $\varphi_\theta$, since
\begin{align}
  (-\Delta)^s (a-u_\lambda)^\theta &=\theta (a-u_\lambda)^{(\theta-1)}[(-\Delta)^s (a-u_\lambda)]\nonumber  \\
  &\quad \quad - \frac {\theta(\theta-1)(a-u_\lambda)^{(\theta-2)}}{2}\int_\Omega\frac{[(a-u_\lambda)(x)-(a-u_\lambda)(y)]^2}{|y-x|^{}N+2s}dy
  \nonumber \\
  &\leq \theta (a-u_\lambda)^{(\theta-1)}[(-\Delta)^s (a-u_\lambda)],\nonumber
\end{align}
we have that
\begin{eqnarray}
  \int_{B_{r_n}(0)}\frac{2\lambda [(a-u_\lambda)^\theta-\epsilon_\lambda^\theta]^2}{(a-u_\lambda)^{3}}dx &\le &
 \int_{B_{r_n}(0)}[(-\Delta)^s (a-u_\lambda)^\theta] \cdot (a-u_\lambda)^\theta dx\nonumber \\
   &\leq&  \theta\int_{B_{r_n}(0)}(a-u_\lambda)^{2\theta-1} \cdot [(-\Delta)^s (a-u_\lambda)] dx\\
   &=&  \theta\int_{B_{r_n}(0)} [(-\Delta)^\frac s2 (a-u_\lambda)^{2\theta-1}] [(-\Delta)^\frac s2 (a-u_\lambda)] dx.\label{4.3.2}
\end{eqnarray}
Since $u_\lambda$ is the minimal solution of (\ref{eq 1.1}), then
\begin{equation}\label{4.3.3}
  (-\Delta)^s (a-u_\lambda)=(-\Delta)^s a-\frac{\lambda}{(a-u_\lambda)^2}\quad {\rm in} \ \  B_{r_n}(0).
\end{equation}
Multiplying  (\ref{4.3.3})  by $\theta [(a-u_\lambda)^{2\theta-1}-\epsilon_\lambda^{2\theta-1}]$ and applying integration by parts yields that
\begin{align*}
\quad &\int_{B_{r_n}(0)}[(-\Delta )^s a-\frac{\lambda}{(a-u_\lambda)^2}][(a-u_\lambda)^{2\theta-1}-\epsilon_\lambda^{2\theta-1}]dx
\\=&\int_{B_{r_n}(0)}[(-\Delta )^\frac s2 (a-u_\lambda)]\cdot[(-\Delta )^\frac s2 (a-u_\lambda)^{2\theta-1}]dx,
\end{align*}
 then together with (\ref{4.3.2}), we deduce that
\begin{align*}
\int_{B_{r_n}(0)}\frac{2\lambda [(a-u_\lambda)^\theta-\epsilon_\lambda^\theta]^2}{(a-u_\lambda)^{3}}dx \le   -\theta \int_{B_{r_n}(0)}[ \Delta^s a+\frac{\lambda}{(a-u_\lambda)^2}][(a-u_\lambda)^{2\theta-1}-\epsilon_\lambda^{2\theta-1}]dx,
\end{align*}
where  $\Delta^s=-(-\Delta)^s$,  then
\begin{align*}
\lambda(2+\theta)\int_{B_{r_n}(0)}\frac1{(a-u_\lambda)^{3-2\theta}}dx&\le \int_{B_{r_n}(0)}\frac{4\lambda\epsilon_\lambda^\theta}{(a-u_\lambda)^{3-\theta}}dx
-\int_{B_{r_n}(0)}\frac{2\lambda\epsilon_\lambda^{2\theta}}{(a-u_\lambda)^{3}}dx
\\ \ \  -\theta \int_{B_{r_n}(0)} \frac{\Delta^s a}{(a-u_\lambda)^{1-2\theta}}dx& +\theta \epsilon_\lambda^{2\theta-1}\int_{B_{r_n}(0)} \Delta^s a dx
  +\theta \lambda\int_{B_{r_n}(0)}\frac{\epsilon_\lambda^{2\theta-1}}{(a-u_\lambda)^2}dx.
\end{align*}

Since the mapping $\lambda\mapsto u_\lambda$ is strictly increasing, we have that
$$\epsilon_\lambda =a(r_n)-u_\lambda(r_n)  \ge a(r_n)-u_{\lambda^*}(r_n):=\varepsilon_n>0$$
and  it infers by $a(x)=\kappa(1-|x|^2)^{\frac23s}$,
$$|\Delta^s a|\le C_{n,s}\quad {\rm in}\quad B_{r_n}(0)$$
 for some $C_{n,s}>0$, then letting $\theta\in(-2,0)$, we have that $2+\theta>0$ and by  H\"{o}lder inequality, we obtain that
\begin{align*}
&\lambda(2+\theta)\int_{B_{r_n}(0)}\frac1{(a-u_\lambda)^{3-2\theta}}dx
\\ \le &  \int_{B_{r_n}(0)} \frac{4\lambda\epsilon_\lambda^\theta}{(a-u_\lambda)^{3-\theta}}dx-\theta\int_{B_{r_n}(0)} \frac{C_{n,s}}{(a-u_\lambda)^{1-2\theta}}  dx
-\theta\varepsilon_n^{2\theta-1} C_{n,s} |B_{r_n}(0)|\nonumber
\\ \le  &  4\lambda^*\varepsilon_n^{\theta} |B_1(0)|^{ \frac{-\theta}{3-2\theta}}\left(\int_{B_{r_n}(0)}\frac{1}{(a-u_\lambda)^{3-2\theta}}dx\right)^{\frac{3-\theta}{3-2\theta}}
\\&  -\theta C_{n,s} |B_1(0)|^{ \frac{2}{3-2\theta}} \left(\int_{B_{r_n}(0)}\frac{1}{(a-u_\lambda)^{3-2\theta}}dx\right)^{\frac{1-2\theta}{3-2\theta}}
-\theta \varepsilon_n^{2\theta-1} C_{n,s} |B_1(0)|,
\end{align*}
thus,
there exists $c_{65}>0$ independent on $ \lambda$ such that
\begin{equation}\label{4.3.4}
 \int_{B_{r_n}(0)}\frac1{(a-u_\lambda)^{3-2\theta}}dx\le c_{65}.
\end{equation}
When $1\le N\le \frac{14s}{3}$, $\frac{3N}{2s}\le 3-2\theta$ for some $\theta\in(-2,0)$, by Lemma \ref{lm 4.2},
we have that $u_{\lambda}$ has uniformly in $C^{2,\beta}_{loc}(B_1(0))$ with $\beta<\gamma$, then $u_{\lambda^*}$
is a classical solution of (\ref{eq 1.1}) with $\lambda=\lambda^*$ and
$a-u_{\lambda^*}>0$ in $B_1(0)$.  \hfill$\Box$

\bigskip

\noindent{\bf Proof of Theorem \ref{teo 3}.}
Proposition \ref{pr 4.1} shows that $u_{\lambda^*}$ is a weak solution of (\ref{eq 1.1}) with $\lambda=\lambda^*$.
The stability of $u_\lambda$   follows by Proposition \ref{pr 4.2} for $\lambda\in(0,\lambda_*)$.
 When $\gamma=\frac23s$, the stability and regularity of $u_{\lambda^*}$ are obtained by Proposition \ref{pr 4.3} and Proposition \ref{pr 4.4}. \hfill$\Box$

\bigskip

\noindent {\bf  Conflicts of interest:} The authors declare that
they have no conflicts of interest regarding this work.

\medskip

\noindent {\bf Data availability:} This paper has no associated
data.

\medskip

\noindent{\small {\bf Acknowledgements:} 
This work is supported by the National Natural Science Foundation of China (Nos.12361043, 12371114),
the Natural Science Foundation of Jiangxi Province  (Nos.20252BAC240158, 20232ACB201001, 20232ACB211001),
and the National Key R$\&$D Program of China (No.2022YFA1005601).}

\end{document}